\date{} 
\title{Liouville quantum gravity and the Brownian map III:\\
 the conformal structure is determined}
\author{Jason Miller and Scott Sheffield}
\def\@rst #1 #2other{#1}
\newcommand\MR[1]{\relax\ifhmode\unskip\spacefactor3000 \space\fi
  \MRhref{\expandafter\@rst #1 other}{#1}}
\newcommand{\MRhref}[2]{\href{http://www.ams.org/mathscinet-getitem?mr=#1}{MR#2}}
\newcommand{\CW}{{\mathcal W}}
\newcommand{\CF}{{\mathcal F}}
\newcommand{\CS}{{\mathcal S}}
\newif\ifhyper\IfFileExists{hyperref.sty}{\hypertrue}{\hyperfalse}
\ifhyper\usepackage{hyperref}\fi
\newif\ifdraft
\numberwithin{equation}{section}
\numberwithin{figure}{section}
\newtheorem{theorem}{Theorem}
\numberwithin{theorem}{section}
\newtheorem{lemma}[theorem]{Lemma}
\newtheorem{proposition}[theorem]{Proposition}
\theoremstyle{remark}
\theoremstyle{remark}
\newcommand{\R}{\mathbf{R}}
\renewcommand{\C}{\mathbf{C}}
\newcommand{\D}{\mathbf{D}}
\newcommand{\N}{\mathbf{N}}
\definecolor{purple}{rgb}{0.7,0,0.7}
\definecolor{gray}{rgb}{0.6,0.6,0.6}
\definecolor{dgreen}{rgb}{0.0,0.4,0.0}
\definecolor{dblue}{rgb}{0.0,0.0,0.5}
\newcommand{\SH}{{\rm sh}}
\newcommand{\dsphere}{d}
\newcommand{\ol}{\overline}
\newcommand{\ul}{\underline}
\newcommand{\wh}{\widehat}
\newcommand{\wt}{\widetilde}
\newcommand{\CA}{{\mathcal A}}
\newcommand{\CG}{{\mathcal G}}
\newcommand{\CB}{{\mathcal B}}
\newcommand{\CT}{{\mathcal T}}
\newcommand{\CBM}{{\mathcal S}_{{\mathrm {BM}}}}
\newcommand{\s}{{\mathbf S}}
\newcommand{\giv}{\,|\,}
\newcommand{\one}{{\mathbf 1}}
\def\diam{\mathop{\mathrm{diam}}}
\def\dist{\mathop{\mathrm{dist}}}
\newcommand{\SLE}{{\rm SLE}}
\newcommand{\QLE}{{\rm QLE}}
\newcommand{\CQ}{{\mathcal Q}}
\newcommand{\CM}{{\mathcal M}}
\newcommand{\CU}{{\mathcal U}}
\newcommand{\p}{{\mathbf P}}
\newcommand{\pr}[1]{\p\!\left[#1\right]}
\newcommand{\ex}[1]{\E\!\left[#1\right]}
\newcommand{\var}[1]{\mathrm{var}\!\left(#1\right)}
\newcommand{\cov}[2]{\mathrm{cov}\!\left(#1,#2\right)}
\def\Ito/{It\^o}
\def \P {{\bf P}}
\def \p {{\P}}
\def \E {{\bf E}}
\newcommand{\qdist}{d_\CQ}
\newcommand{\oqdist}{\ol{d}_\CQ}
\newcommand{\qdistT}{\wt{d}_\CQ}
\newcommand{\ball}[2]{B(#1,#2)}
\newcommand{\qball}[2]{B_\CQ(#1,#2)}
\newcommand{\qhull}[2]{B_\CQ^\bullet(#1,#2)}
\newcommand{\qhullj}[3]{B_\CQ^{\bullet,#3}(#1,#2)}
\newcommand{\qhullT}[2]{\wt{B}_\CQ^\bullet(#1,#2)}
\newcommand{\qhullTj}[3]{\wt{B}_\CQ^{\bullet,#3}(#1,#2)}
\newcommand{\qdiam}{\mathrm{diam}_\CQ}
\begin{document} \maketitle

\begin{abstract}
Previous works in this series have shown that an instance of a $\sqrt{8/3}$-Liouville quantum gravity (LQG) sphere has a well-defined distance function, and that the resulting metric measure space (mm-space) agrees in law with the Brownian map (TBM). In this work, we show that given {\em just} the mm-space structure, one can a.s.\ recover the LQG sphere. This implies that there is a canonical way to parameterize an instance of TBM by the Euclidean sphere (up to M\"obius transformation). In other words, an instance of TBM has a canonical conformal structure.

The conclusion is that TBM and the $\sqrt{8/3}$-LQG sphere are equivalent. They ultimately encode the same structure (a topological sphere with a measure, a metric, {\em and} a conformal structure) and have the same law. From this point of view, the fact that the conformal structure a.s.\ determines the metric and vice-versa can be understood as a property of this unified law.  The results of this work also imply that the analogous facts hold for Brownian and $\sqrt{8/3}$-LQG surfaces with other topologies.
\end{abstract}
\newpage
\tableofcontents
\newpage

\parindent 0 pt
\setlength{\parskip}{0.25cm plus1mm minus1mm}

\medbreak {\noindent\bf Acknowledgements.} 
We have benefited from conversations about this work with many people, a partial list of whom includes Omer Angel, Itai Benjamini, Nicolas Curien, Hugo Duminil-Copin, Amir Dembo, Bertrand Duplantier, Ewain Gwynne, Nina Holden, Jean-Fran{\c{c}}ois Le Gall, Gregory Miermont, R\'emi Rhodes, Steffen Rohde, Oded Schramm, Stanislav Smirnov, Xin Sun, Vincent Vargas, Menglu Wang, Samuel Watson, Wendelin Werner, David Wilson, and Hao Wu.

We thank Ewain Gwynne, Nina Holden, and Xin Sun for comments on an earlier version of this article.  We also thank an anonymous referee for a very careful reading of this article, which led to many improvements in the exposition.

We would also like to thank the Isaac Newton Institute (INI) for Mathematical Sciences, Cambridge, for its support and hospitality during the program on Random Geometry where part of this work was completed.  J.M.'s work was also partially supported by DMS-1204894 and J.M.\ thanks Institut Henri Poincar\'e for support as a holder of the Poincar\'e chair, during which part of this work was completed.  S.S.'s work was also partially supported by DMS-1209044, DMS-1712862, a fellowship from the Simons Foundation, and EPSRC grants {EP/L018896/1} and {EP/I03372X/1}.

\section{Introduction}
\label{sec::intro}

\subsection{Overview}
\label{sec::overview}

The unit area Brownian map (TBM) and the unit area Liouville quantum gravity (LQG) sphere (with parameter $\gamma = \sqrt{8/3}$) are two natural continuum models for ``random surfaces'' which are both homeomorphic to the Euclidean sphere $\s^2$. Both objects come naturally endowed with an area measure. However, an instance of TBM additionally comes endowed with a metric space structure (i.e., a two-point distance function) while an instance of the LQG sphere comes endowed with a conformal structure (i.e., it can be canonically parameterized by $\s^2$ up to M\"obius transformation).

This article is the third and last in a series of papers that explain how to endow each of these random objects with the {\it other's} structure in a canonical way and show that once this is done the two objects agree in law. In other words, once both TBM and the $\sqrt{8/3}$-LQG sphere are endowed with both structures, they encode exactly the same information and have exactly the same law. From this point of view, any theorem about TBM is henceforth also a theorem about the $\sqrt{8/3}$-LQG sphere and vice-versa.

The first two papers in the current series \cite{qlebm, qle_continuity} show how to equip an instance~$\CS$ of the unit area $\sqrt{8/3}$-LQG sphere (as defined in \cite{dms2014mating,quantum_spheres}) with a metric in such a way that the resulting metric measure space (mm-space) agrees in law with TBM.

To phrase this another way, suppose one first samples an instance $\CS$ of the LQG sphere, and then uses it to construct an instance $\CBM$ of TBM (by endowing $\CS$ with a metric structure and then forgetting about the conformal structure) together with the identity homeomorphism $\phi$ from $\CS$ to $\mathcal \CBM$.  Then the results of \cite{qlebm, qle_continuity} imply that the law of $(\CS, \CBM)$ is a {\em coupling} of the $\sqrt{8/3}$-LQG sphere and TBM.  Moreover, it is shown in \cite{qlebm,qle_continuity} that in this coupling the first object a.s.\ determines the second.  The current paper shows that the second object a.s.\ determines the first; indeed, given only~$\CBM$ one can a.s.\ reconstruct both~$\CS$ and~$\phi$. In particular, this implies that there is a.s.\ a canonical way to endow an instance of TBM with a conformal structure (namely, the conformal structure it inherits from $\mathcal S$).

Although the three papers belong to the same series, let us stress that the methods and results are extremely different. The current paper implements a subtle conformal removability analysis that has no parallel in either \cite{qlebm} or \cite{qle_continuity}, the first two papers in the series. As explained in Section~\ref{subsec::outline}, the current paper {\em does} have some strategic similarity to \cite[Section~10]{dms2014mating} but the conceptual details that appear here are more difficult and delicate. Let us also stress the practical importance of the current paper. Once one knows that an instance of TBM comes with a {\em canonical} conformal structure, one can construct Brownian motions, conformal loop ensembles, and other natural random geometric objects {\em on top of} an instance of TBM.  This allows one to formulate many scaling limit conjectures (e.g., that simple random walk on random planar maps scales to Brownian motion on TBM) and may pave the way to a deeper understanding of the relationships between various discrete and continuous models. Moreover, conformal symmetries and conformal field theory have obviously been of great importance to the physics literature on random surfaces. Now that we have endowed TBM with a conformal structure, one can apply these techniques to the study of TBM.

Throughout most of this paper, we will actually work with an infinite volume variant of TBM called the {\em Brownian plane} (TBP), as defined in \cite{cl2012brownianplane}, which was shown in \cite{qle_continuity} to be equivalent to an infinite volume variant of the LQG-sphere, called a {\em quantum cone}, as defined in \cite{SHE_WELD, dms2014mating}. We will then use these results about infinite volume surfaces to deduce analogous results about finite volume spheres and disks.

We remark that, in some sense, our conclusions are even more general.  First we observe that in a certain sense both length space structure and conformal structure are ``local'' properties. To be more precise, let $U_1, U_2, \ldots, U_k$ be any finite cover of the sphere with open neighborhoods.  It is not hard to see that if one knows the conformal structure on each $U_i$, one can recover the conformal structure of the whole sphere. Similarly, if one knows the ``internal metric'' structure of each $U_i$ (i.e., one can define the length of any path that lies strictly inside one of the $U_i$) then this determines the length of any path (and hence the overall metric) on the entire sphere, because if $\eta:[0,1] \to \CBM$ is such a path, then the connected components of the sets $\eta^{-1}(U_i)$ form an open cover of $[0,1]$ and hence must have a finite subcover since $[0,1]$ is compact.)

The method for producing a length space structure on LQG, as described in \cite{qlebm, qle_continuity}, was shown to be ``local'' in the sense that it is a.s.\ the case that the internal metric on any open subset $U_i$ of a $\sqrt{8/3}$-LQG sphere instance is determined (in a particular concrete way) by the measure and conformal structure {\em within} $U_i$. This paper will show that the method for recovering the conformal structure from the metric is local in a similar sense.

Informally, these statements imply that we can impose a canonical length space structure on {\em any} surface that looks like a $\sqrt{8/3}$-LQG sphere locally, and a canonical conformal structure on {\em any} surface that looks locally like an instance of TBM. This should allow one to address, for example, higher genus variants of TBM and LQG.

However, let us stress if one has a {\em specific} method for generating a higher genus form of TBM --- and another {\em specific} method for generating a higher genus LQG surface, one would still have to do some additional work to show that the first surface (endowed with its canonical conformal structure) and the second surface (endowed with its canonical length space structure) agree in law. There are many variants of both LQG and TBM (which may differ in genus, boundary structure, and number and type of special marked points).  See for example the recently announced work in preparation by Bettinelli and Miermont (on the Brownian map side) \cite{bm_compact2} or the work by Guillarmou, Rhodes and Vargas (on the LQG side) in \cite{guillarmourhodesvargas}.  It remains an open problem to show that these surfaces described in these papers are equivalent. We only note here that all of these variants can now be defined as random variables on the same space --- the space of (possibly marked) {\em conformal mm-spaces} --- and that comparing the laws of surfaces defined on the LQG side with the laws of surfaces defined on the Brownian map side is therefore at least possible in principle.

\subsection{Main results from previous papers}
\label{subsec::prior_main_results}

Before we proceed, let us recall briefly the approach taken in the first two papers in this series. The first paper \cite{qlebm} considers a sequence of points $(x_n)$ chosen i.i.d.\ from the measure on an instance $\CS$ of the $\sqrt{8/3}$-LQG sphere, and shows how to define a metric~$\qdist$ on $(x_n)$. This is accomplished by letting $\qdist(x_i, x_j)$ denote the amount of time it takes for a certain growth process, started at $x_i$, to absorb the point $x_j$. This growth process is a form of the {\em quantum Loewner evolution with parameters} $(\gamma^2,\eta) = (8/3,0)$ (i.e.\ $\QLE(8/3,0)$) as introduced by the authors in \cite{ms2013qle}. We showed in \cite{qlebm} that~$\qdist$ is a.s.\ determined by $\CS$ together with the set $(x_j)$, i.e., it does not depend on any additional randomness associated with the growth processes.

The second paper in the current series \cite{qle_continuity} shows that there is a.s.\ a unique way to continuously extend the metric~$\qdist$ on $(x_j)$ to a metric $\oqdist$ defined on the entire sphere~$\CS$. It is also shown that the identity map from~$\CS$ (endowed with the Euclidean metric $\dsphere$) to itself (endowed with the metric $\oqdist$) is a.s.\ H\"older continuous in both directions.  It was further shown in \cite{qle_continuity} that as a random mm-space, the pair $(\CS,\oqdist)$ agrees in law with TBM.  Along the way to proving this result, we showed that $(\CS,\oqdist)$ is geodesic, i.e.\ it is a.s.\ the case that every pair of points $x,y$ can be connected by a path whose length with respect to $\oqdist$ is equal to $\oqdist(x,y)$. We recall below three major statements obtained in \cite{qle_continuity}.

In making these statements, as just above, we use $\dsphere$ to denote the standard Euclidean path-length metric on $\s^2$ (when $\s^2$ is embedded in $\R^3$ in the standard way).

\begin{theorem}[\cite{qle_continuity}]
\label{thm::continuity}
Suppose that $\CS = (\s^2,h)$ is a unit area $\sqrt{8/3}$-LQG sphere, $(x_n)$ is an i.i.d.\ sequence chosen from the quantum measure on $\CS$, and $\qdist$ is the associated $\QLE(8/3,0)$ metric on $(x_n)$.  Then $(x_i,x_j) \mapsto \qdist(x_i,x_j)$ is a.s.\ H\"older continuous with respect to the metric $\dsphere^2( (x_1, y_1), (x_2, y_2) ) = \dsphere(x_1, x_2) + \dsphere(y_1,y_2)$ that $\dsphere$ induces on the product space $\s^2 \times \s^2$.  In particular, $\qdist$ uniquely extends to a H\"older continuous (w.r.t.\ $\dsphere^2$) function $\oqdist$ on $\s^2 \times \s^2$.
\end{theorem}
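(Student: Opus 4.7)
The plan is to prove the theorem in two moves: (i) establish a quantitative Hölder-type estimate of the form $\qdist(x_i,x_j)\le C\,\dsphere(x_i,x_j)^\alpha$ holding simultaneously for all $i,j$ with high probability, and then (ii) invoke standard uniform continuity to extend $\qdist$ uniquely from the countable dense set $(x_n)$ to a Hölder continuous $\oqdist$ on $\s^2\times\s^2$. The second step is essentially automatic once the first is in hand, since for any dense $D\subseteq \s^2\times\s^2$ a function on $D\cap(x_n\times x_n)$ that is uniformly Hölder in the $\dsphere^2$ metric extends uniquely, and the extension inherits the same Hölder exponent.

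To carry out step (i) I would first fix a typical point $z\in\s^2$ (in the sense that the circle averages $h_\epsilon(z)$ have their typical logarithmic behavior and the quantum mass of Euclidean balls around $z$ is not too atypical) and show that the $\QLE(8/3,0)$ growth from $z$ absorbs the Euclidean ball $B_\epsilon(z)$ in $\qdist$-time bounded above by $\epsilon^\alpha$, for some $\alpha>0$ depending only on $\gamma=\sqrt{8/3}$. This should come from combining the moment bounds on the $\qdist$-distance (and on diameters of $\qdist$-balls) established in \cite{qlebm}, the Markov property of the QLE exploration, and the KPZ-type relation between the quantum mass of a Euclidean ball and its Euclidean radius (which gives control of the amount of quantum area the growth needs to consume to cover $B_\epsilon(z)$). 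A convenient way to package this is to prove a tail estimate of the form $\Prob[\qdist(z,\partial B_\epsilon(z))>\epsilon^\alpha]\le \epsilon^\beta$ for some $\alpha,\beta>0$, uniformly over $z$ away from exceptional points of $h$.

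Second, I would promote this pointwise bound to a uniform bound by discretizing $\s^2$ on a dyadic scale. For each dyadic point $z$ and each scale $2^{-n}$ let $E_{z,n}$ denote the event that $\qdist(x,y)>C\,2^{-n\alpha}$ for some $x,y\in(x_j)\cap B_{2^{-n}}(z)$. The number of dyadic points at scale $n$ is of order $2^{2n}$, so provided the exponent $\beta$ in the tail estimate beats this polynomial count we can apply Borel–Cantelli to conclude that only finitely many $E_{z,n}$ occur. Together with the triangle inequality $|\qdist(x_i,y_i)-\qdist(x_j,y_j)|\le \qdist(x_i,x_j)+\qdist(y_i,y_j)$, this yields a Hölder modulus of continuity for $(x_i,x_j)\mapsto \qdist(x_i,x_j)$ on $(x_n)\times(x_n)$ with respect to $\dsphere^2$, which is what we need.

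The main obstacle is producing the uniform-in-$z$ tail estimate $\Prob[\qdist(z,\partial B_\epsilon(z))>\epsilon^\alpha]\le \epsilon^\beta$ with $\beta>2$, since the $\QLE(8/3,0)$ growth is not conformally covariant in a simple way and the atypical behavior of $h$ at thick points of the GFF can cause the growth from $z$ to be anomalously slow or anomalously bushy. Controlling this requires uniform moment bounds on the quantum area of boundary neighborhoods encountered by the growth, together with regularity estimates on the GFF ruling out too many exceptional points at any given scale. Once the tail bound holds uniformly at all but a negligible set of $z$, the Borel–Cantelli and extension arguments are routine and deliver the claimed Hölder extension $\oqdist$.
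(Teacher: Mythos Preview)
This theorem is not proved in the present paper. It is stated in Section~\ref{subsec::prior_main_results} as one of the main results of the companion paper \cite{qle_continuity} (the second paper in the series), and no argument for it is given or even sketched here. There is therefore no proof in this paper against which to compare your proposal.

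As for the proposal on its own terms: what you have written is a high-level strategy outline rather than a proof. The entire substance of the theorem lies in the tail estimate $\Prob[\qdist(z,\partial B_\epsilon(z)) > \epsilon^\alpha] \le \epsilon^\beta$ with $\beta > 2$ uniformly in $z$, and you explicitly flag this as the ``main obstacle'' without resolving it. The ingredients you gesture at --- moment bounds from \cite{qlebm}, a KPZ-type relation, and the Markov property of the $\QLE$ exploration --- do not by themselves produce such a bound; \cite{qlebm} constructs $\qdist$ only on a countable dense set and does not supply Euclidean diameter estimates of the form you need. Obtaining the uniform tail bound is exactly the technical heart of \cite{qle_continuity}, and your sketch does not engage with those difficulties. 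Once that bound is in hand, the Borel--Cantelli and density-extension steps are indeed routine, as you say.
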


The following states that $\oqdist$ a.s.\ induces a metric on $\s^2$ which is isometric to the metric space completion of $\qdist$.

\begin{theorem}[\cite{qle_continuity}]
\label{thm::metric_completion}
Suppose that $\CS = (\s^2,h)$ is a unit area $\sqrt{8/3}$-LQG sphere and that $\oqdist$ is as in Theorem~\ref{thm::continuity}.  Then $\oqdist$ a.s.\ defines a metric on $\s^2$ which is isometric to the metric space completion of $\qdist$.  Moreover, the identity map between $(\s^2, \dsphere)$ and $(\s^2, \oqdist)$ is a.s.\ H\"older continuous in both directions.
\end{theorem}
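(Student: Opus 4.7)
The plan is to first observe that the symmetry of $\oqdist$, the triangle inequality, and the identity $\oqdist(x,x) = 0$ all extend immediately from the countable set $(x_n)$ to all of $\s^2 \times \s^2$ by the continuity of $\oqdist$ (Theorem~\ref{thm::continuity}) combined with the density of $(x_n)$ in $(\s^2, \dsphere)$. Hence in order to show $\oqdist$ is a genuine metric on $\s^2$ it suffices to establish positive definiteness: $\oqdist(x,y) > 0$ whenever $x \neq y$. I would bundle this together with the second assertion of the theorem by proving directly the stronger reverse H\"older estimate $\dsphere(x,y) \leq C\, \oqdist(x,y)^\alpha$ for all $x,y \in \s^2$, with random $C < \infty$ and deterministic $\alpha > 0$.

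The core step, and main obstacle, is the reverse H\"older bound. Equivalently one wants to show that for every sufficiently small $\eps > 0$, the Euclidean diameter of the closed $\oqdist$-ball $\{w \in \s^2 : \oqdist(z,w) \leq \eps\}$ is at most $C \eps^\alpha$, uniformly in $z \in \s^2$. My approach is to identify such a ball with the filled $\QLE(8/3,0)$ metric hull grown from $z$ up to quantum radius $\eps$, using the construction and continuity results of \cite{qlebm, qle_continuity}, and then to control its Euclidean diameter via two LQG inputs: (i) upper bounds on the quantum area of a $\QLE(8/3,0)$ hull at time $\eps$, which behave like a power of $\eps$ and were developed in the previous papers in this series; and (ii) uniform lower bounds on the quantum mass of Euclidean disks, namely that with overwhelming probability every Euclidean disk of radius $r$ carries quantum mass at least $r^{\beta}$ for some $\beta < \infty$, uniformly over disk centers. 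These two estimates together prevent a quantum-metric hull of quantum radius $\eps$ from containing any Euclidean disk of radius much larger than a suitable power of $\eps$. The delicate point here is that a priori such a hull could be spread out along a thin tentacle and so have large Euclidean diameter despite small quantum area; ruling this out requires exploiting the simply-connected topological structure of $\QLE(8/3,0)$ hulls together with the mass lower bound. The need for uniformity in both $z$ and $\eps$ then forces a multiscale/chaining argument: I would take a fine dyadic net of Euclidean scales, apply a Borel--Cantelli type bound along this net, and interpolate between net points using the H\"older continuity of Theorem~\ref{thm::continuity}.

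Once the reverse H\"older bound is in hand the rest is formal. Positive definiteness is immediate, so $\oqdist$ is a metric on $\s^2$, and together with Theorem~\ref{thm::continuity} the metrics $\dsphere$ and $\oqdist$ induce the same topology on $\s^2$. In particular $(\s^2, \oqdist)$ is compact, hence complete, and $(x_n)$ is dense in it. Since $\oqdist$ restricted to $(x_n) \times (x_n)$ equals $\qdist$, the inclusion $(\{x_n\}, \qdist) \hookrightarrow (\s^2, \oqdist)$ is an isometric embedding of a countable metric space into a complete space with dense image, and the universal property of metric completions then identifies $(\s^2, \oqdist)$ with the abstract metric space completion of $(\{x_n\}, \qdist)$ in a canonical isometric fashion.
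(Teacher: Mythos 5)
This theorem is quoted from the second paper of the series, \cite{qle_continuity}, and the present paper does not supply a proof of it; there is therefore no proof in this article to compare your attempt against. Evaluating your proposal on its own terms: the overall structure is reasonable (extend the metric axioms to $\s^2$ by density and continuity; prove a reverse H\"older estimate to get both positive definiteness and bi-H\"older equivalence; then obtain the completion statement formally from compactness and density), and the two LQG inputs you name --- upper bounds on the quantum area of a $\QLE(8/3,0)$ hull of quantum radius $\eps$ and lower bounds on the quantum mass of Euclidean balls --- are indeed the right raw materials.

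The genuine gap is in how you propose to rule out the ``thin tentacle'' scenario. You write that this is handled by ``exploiting the simply-connected topological structure of $\QLE(8/3,0)$ hulls together with the mass lower bound,'' but simple connectivity alone does not force a set of small quantum area to have small Euclidean diameter: a thin, simply connected tube of small area can still be Euclidean-long, and a mass lower bound for Euclidean disks is vacuous if the hull contains no Euclidean disk of appreciable radius. What actually closes this gap is the forward H\"older estimate from Theorem~\ref{thm::continuity}, which you only invoke for ``interpolation.'' That estimate says that a Euclidean ball of radius of order $\eps^{1/\alpha_0}$ around any point $z$ is contained in $\qball{z}{\eps}$; combined with the triangle inequality (already available on the dense set), every point $z$ of $\qball{x}{\eps}$ therefore carries a Euclidean disk of radius $\asymp \eps^{1/\alpha_0}$ inside $\qball{x}{2\eps}$. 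A packing of roughly $\delta/\eps^{1/\alpha_0}$ disjoint such Euclidean disks along a $\qdist$-geodesic of Euclidean diameter $\delta$, each with quantum mass bounded below by a power of $\eps$, is then pitted against the quantum-area upper bound for $\qball{x}{2\eps}$, yielding $\delta \lesssim \eps^{\alpha}$ for suitable $\alpha>0$. In short, the resolution is a packing argument hinging on the already-established forward H\"older bound, not on simple connectivity; as written, that step of your argument would not go through. The final paragraph, identifying $(\s^2,\oqdist)$ with the metric completion of $(\{x_n\},\qdist)$ via compactness and density, is correct and standard.
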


Throughout the rest of the paper, in order to lighten the notation, we will write $\qdist$ instead of $\oqdist$.  We will also make use of the following notation.  We will write $\ball{z}{\epsilon}$ for the open Euclidean ball centered at $z$ of radius $\epsilon$ and write $\qball{z}{\epsilon}$ for the ball with respect to $\qdist$.  When we have fixed a given reference point $w$, we will write $\qhull{z}{\epsilon}$ for the hull or filling of $\qball{z}{\epsilon}$ relative to $w$.  That is, $\qhull{z}{\epsilon}$ is the set of points disconnected from $w$ by the closure of $\qball{z}{\epsilon}$.  Typically, $w = \infty$.  For a set $K$, we also let $\qdiam(K)$ be the diameter of $K$ with respect to $\qdist$.

\begin{theorem}[\cite{qle_continuity}]
\label{thm::tbm_lqg}
Suppose that $\CS = (\s^2,h)$ is a unit area $\sqrt{8/3}$-LQG sphere and that $\qdist$ is as in Theorem~\ref{thm::continuity}.  Then the law of the random mm-space $(\s^2, \qdist,\mu_h)$ is the same as the law of TBM.  If $\CS = (\D,h)$ is instead a unit boundary length $\sqrt{8/3}$-LQG disk, then the law of $(\D,\qdist,\mu_h)$ is the same as that of a Brownian disk with unit boundary length.  Finally, if $\CS = (\C,h)$ is a $\sqrt{8/3}$-LQG cone, then the law of $(\C,\qdist,\mu_h)$ is the same as the law of TBP.
\end{theorem}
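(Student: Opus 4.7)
My plan is to prove the statement first in the infinite-volume setting (the $\sqrt{8/3}$-LQG cone $\leftrightarrow$ TBP) and then transfer to the sphere and disk by conditioning/rerooting arguments. The infinite-volume case is more convenient because the quantum cone has the strong independence structure from \cite{dms2014mating} (in particular, the region outside a QLE hull is again a quantum disk independent of the interior given its boundary length), and because TBP is known from \cite{cl2012brownianplane} to enjoy an analogous Markovian growth from the origin along metric balls.

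The heart of the argument is to run the $\QLE(8/3,0)$ growth process from $\infty$ targeted at the marked origin of $\CS = (\C,h)$ and to identify the resulting boundary length process. Concretely, let $L_t$ denote the quantum length of $\partial \qhull{0}{t}$. Using the construction of $\QLE(8/3,0)$ in \cite{qlebm} via eating successive reverse-SLE slices, together with the quantum cone/disk slice calculus from \cite{dms2014mating,quantum_spheres}, one shows that $L_t$ is a time-homogeneous Markov process with explicit transitions — matching the boundary length process that arises when one explores TBP by metric balls from the origin (computed via the Brownian snake / labeled tree encoding in \cite{cl2012brownianplane}, and essentially a totally asymmetric $3/2$-stable process). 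Next, one needs the conditional independence statement: given $(L_s)_{s\le t}$, the unexplored region $\C \setminus \qhull{0}{t}$ is a quantum disk of boundary length $L_t$, independent of the filled metric ball $\qhull{0}{t}$. This is exactly the Markov property of the $\QLE$ exploration established in \cite{qlebm, qle_continuity}, and its counterpart on the TBM/TBP side is the well-known Markov property of the Brownian disk/plane under metric ball exploration. Iterating at a countable set of stopping times and using Theorem~\ref{thm::continuity} to conclude a.s.\ continuity, one deduces that the joint law of the growing balls (and hence of the whole mm-space) matches that of TBP.

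To pass from TBP to TBM, the plan is to use a Radon--Nikodym / conditioning argument: the unit area $\sqrt{8/3}$-LQG sphere can be obtained from the $\sqrt{8/3}$-LQG cone by conditioning on total quantum area equal to $1$ (after appropriate rerooting at two i.i.d.\ area-typical points), and analogously TBM is obtained from TBP by conditioning on total area equal to $1$. Since Theorem~\ref{thm::metric_completion} ensures that $\qdist$ is continuous in $h$, the conditioning is compatible with the mm-space structure, so equality in law on the infinite-volume side descends to the unit area case. The disk case follows by an intermediate step: a Brownian disk arises as the unexplored complement of a QLE hull inside the sphere (with boundary length calibrated to be $1$), and we have already identified this complement in law with a Brownian disk of unit boundary length during the proof above.

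The main obstacle I anticipate is the matching of the boundary length evolutions: although both sides are tractable Markov processes, on the LQG side $L_t$ is defined via the $\gamma$-quantum measure along an $\SLE_6$-like interface, while on the TBM side the analogous quantity is defined via the Brownian snake labels. Getting the transition kernels to literally agree (not just up to time-change and scaling) requires a careful use of the explicit slice decomposition of a quantum disk together with a limiting argument from the discrete peeling of random planar maps; this is where the full strength of \cite{qlebm, qle_continuity} is used, and where one must invest real work rather than just quoting Markov/independence properties.
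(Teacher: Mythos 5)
This theorem is not proven in the present paper; it appears in Section~\ref{subsec::prior_main_results} under the heading ``Main results from previous papers'' and is cited verbatim from \cite{qle_continuity}. So there is no internal proof here to compare your attempt against, and any assessment has to be against the actual argument in \cite{qle_continuity} (together with the characterization paper \cite{map_making} on which it leans).

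With that caveat, the core of your plan does capture the key mechanism of \cite{qle_continuity}: run $\QLE(8/3,0)$ from a marked point, identify the boundary-length process of filled metric balls with the $3/2$-stable continuous-state branching process that also governs metric ball exploration of the Brownian plane/map, and use the Markovian ``unexplored region is again a quantum disk given its boundary length'' structure to match the growing balls in law. You correctly flag that equating the transition kernels exactly (not merely up to time-change) is where the work lies. However, your transfer step from infinite to finite volume has a genuine gap. You propose to obtain the unit-area $\sqrt{8/3}$-LQG sphere from the quantum cone ``by conditioning on total quantum area equal to $1$,'' and analogously TBM from TBP. But a quantum cone and TBP both a.s.\ have \emph{infinite} total mass, so conditioning on unit area is a degenerate conditioning on a null (indeed impossible) event, and the Radon--Nikodym argument you sketch cannot even be set up. The correct relationships are in the other direction: a quantum cone (resp.\ TBP) arises as a zoomed-in limit near a quantum-typical (resp.\ area-typical) point of the sphere (resp.\ TBM), and passing from the finite-volume statement to the infinite-volume one (or vice versa) requires a genuine local absolute-continuity / scaling-limit argument, not a conditioning on total area. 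The actual route in \cite{qle_continuity} works with the finite-volume sphere and invokes the axiomatic characterization of TBM from \cite{map_making} (geodesic mm-space homeomorphic to $\s^2$, rerooting invariance, and a prescribed law for the pair of boundary-length processes under simultaneous metric exploration from two marked points), rather than deducing TBM from a TBP statement by conditioning. You should replace your ``condition the cone on unit area'' step with an appeal to that characterization, or explain a correct limiting procedure relating the cone to the sphere.
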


It is also deduced in \cite{gm2016uihpq} that the mm-space structure associated with a weight-$2$ quantum wedge agrees in law with that of the so-called Brownian half-plane, in which it is also deduced from \cite{bettinelli_miermont} that uniformly random quadrangulations of the upper half-plane converge to the Brownian half-plane.  (See also \cite{bmr2016classification} for more on the Brownian half-plane, as well a more general set of convergence results of this type.)  The work \cite{gm2016uihpq} is part of a series of papers which also includes \cite{gm2016saw,gm2016gluing} and identifies the scaling limit of the self-avoiding walk on random quadrangulations with $\SLE_{8/3}$ on $\sqrt{8/3}$-LQG.

\subsection{Main result of current paper}
\label{subsec::main_results}

Theorem~\ref{thm::tbm_lqg} implies that there exists a coupling of the law of a $\sqrt{8/3}$-LQG unit area quantum sphere $\CS$ and an instance $\CBM = (M,d,\mu)$ of TBM such that the mm-space $(\s^2,\qdist,\mu_h)$ associated with $\CS$ is a.s.\ isometric to $(M,d,\mu)$.  Moreover, by the construction of $\qdist$ given in \cite{qlebm} we have that $(\s^2,\qdist,\mu_h)$ and hence $(M,d,\mu)$ is a.s.\ determined by $\CS$.  That is, $\CBM$ is a measurable function of $\CS$.  Moreover, the analogous facts also hold in the setting of the Brownian disk, TBP, and the Brownian half-plane.  As mentioned above, the main result of this paper is that the converse also holds.

\begin{theorem}
\label{thm::map_determines_embedding}
Suppose that $\CS = (\s^2,h)$ is a $\sqrt{8/3}$-LQG unit area sphere, that $\qdist$ is as in Theorem~\ref{thm::continuity}, that $\CBM$ is the instance of TBM described by $(\s^2, \qdist,\mu_h)$, and that $\phi$ is the identity homeomorphism from $\CS$ to $\CBM$. Then $\CS$ and $\phi$ are a.s.\ determined by $\CBM$. In particular, this means that $\CS$ viewed as a random variable taking values in the space of quantum surfaces is a.s.\ determined by $\CBM$ viewed as a random variable taking values in the space of mm-spaces.

The analogous statement holds in the setting of the coupling of the Brownian disk and a quantum disk, TBP and a quantum cone, and the Brownian half-plane and a weight-$2$ quantum wedge.
\end{theorem}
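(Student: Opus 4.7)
The plan is to establish uniqueness of $\CS$ given $\CBM$ by a conditional resampling argument. By Theorem~\ref{thm::tbm_lqg} there is a regular conditional law of $\CS$ given $\CBM$; let $\CS_1 = (\s^2, h_1)$ and $\CS_2 = (\s^2, h_2)$ be two conditionally i.i.d.\ samples from this law, and let $\phi_1,\phi_2$ be their identity homeomorphisms to $\CBM$. It suffices to prove that $\CS_1$ and $\CS_2$ a.s.\ represent the same quantum surface, since then the conditional law is a point mass. Equivalently, the homeomorphism $\psi = \phi_1^{-1}\circ\phi_2 : \s^2\to\s^2$, which a priori is only a metric-measure isometry between $(\s^2,\qdist,\mu_{h_2})$ and $(\s^2,\qdist,\mu_{h_1})$, must be a.s.\ a M\"obius transformation. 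Following the authors' indication, I would carry out the argument first in the infinite-volume setting of TBP and the quantum cone, where the marked origin and the scale-invariance simplify bookkeeping; the Brownian sphere, disk, and half-plane cases should follow via the local absolute continuity relations with TBP used throughout \cite{qlebm, qle_continuity}.

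The main tool is to decompose $\CBM$ via its intrinsic metric data in such a way that the interfaces become conformally removable curves in both $\CS_1$ and $\CS_2$. The natural candidates are the boundaries of filled metric balls $\qhull{z}{r}$: these are intrinsic to $\CBM$, yet under each $\phi_i$ they are realized as outer boundaries of QLE$(8/3,0)$ hulls in $(\s^2,h_i)$, which have the regularity of a $\kappa<4$ SLE-type curve and are therefore conformally removable. The results of \cite{qlebm, qle_continuity} combined with the mating-of-trees decompositions of \cite{dms2014mating} show that the internal mm-space of each complementary region, together with its quantum boundary length, determines the law of that region as a quantum disk; consequently the conditional law of $\CS_i$ inside either complementary region, given that internal data, does not depend on $i$.

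The argument then proceeds by iteration. I would fix a countable dense sequence of points sampled from $\mu_{\CBM}$ and grow filled metric balls around them at a countable dense set of radii, producing a nested sequence of decompositions of $\CBM$ whose pieces have $\qdist$-diameters and, by the bi-H\"older property of Theorem~\ref{thm::metric_completion}, Euclidean diameters tending to $0$. A resampling / optional-coupling argument at each stage produces a version of $\psi$ that is conformal on the interior of every piece (because the piece's conformal structure is determined up to M\"obius by the internal mm-space data, and the M\"obius ambiguity is fixed by matching boundary length markers and an adjacent piece). The conformal removability of the union of all the interface arcs then upgrades $\psi$ to a conformal map of $\s^2$, hence a M\"obius transformation, and the marked structure of the quantum cone (the origin and the direction determined by the infinite geodesic) pins down the final M\"obius ambiguity, giving $\CS_1=\CS_2$ as quantum surfaces.

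The main obstacle is the removability analysis itself. One must show not only that a single QLE$(8/3,0)$ hull boundary in $(\s^2,h_i)$ is conformally removable, but also that the \emph{countable union} of such boundaries arising from the iterated decomposition is conformally removable, and that $\psi$ extends continuously across this exceptional set. This requires quantitative regularity estimates beyond what appears in \cite{qlebm,qle_continuity}, and is the subtle conformal-removability content alluded to in the introduction. A secondary difficulty is the "base case" of the induction: one must verify that the internal mm-space of a complementary region, together with its boundary length data, genuinely pins down the quantum disk law (not merely its mm-space distribution), so that the conditionally-i.i.d.\ copies really do agree on each piece up to the M\"obius freedom that is subsequently eliminated by the marked points and welding.
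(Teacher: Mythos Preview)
Your high-level framing is right: the argument is a resampling argument in the quantum-cone/TBP setting, the pieces to be resampled are filled metric balls, and conformal removability of their boundaries is the hinge. You also correctly flag the two genuine difficulties. But the mechanism you propose for resolving them is not the one that works, and in fact contains a gap.

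The issue is the passage from ``pieces with Euclidean diameter tending to $0$'' and ``conformal on each piece'' to ``$\psi$ is M\"obius.'' You want to take a \emph{countable} nested family of ball boundaries and invoke removability of the union. The paper does not prove this, and there is no reason to expect it: the Jones--Smirnov theory and the extension in the appendix give removability only for \emph{finite} unions of simple H\"older-domain boundaries. More fundamentally, even granting removability, your $\psi$ is not conformal off that union. Saying the conditional law of $\CS_i$ on a piece ``does not depend on $i$'' only tells you the two pieces have the same \emph{law}; it does not make $\psi$ conformal there. You never produce a map that is conformal off a finite removable set.

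The paper circumvents both problems with a quantitative argument, not a limiting removability argument. At each scale $\epsilon$ one covers $\qhull{0}{s}$ by \emph{finitely} many filled balls $\qhull{\Gamma(t_j)}{\epsilon^\xi}$ centered along the Brownian-snake space-filling curve $\Gamma$; the finite union of their boundaries is removable (Appendix), so after resampling all of them one obtains a genuine conformal map between the two embeddings. What remains is to show the composite resampling map $\psi_\epsilon$ is close to the identity. This is done by an Efron--Stein variance bound (Lemma~\ref{lem::variance_bound}) together with a distortion estimate for conformal maps (Lemma~\ref{lem::conformal_map_bound}), reducing the question to a fourth-moment bound $\E[\diam(\qhull{y}{\epsilon^\xi})^4\one_{\CA_{r,s}}]\le c\,\epsilon^{a_1\xi}$ with $a_1>4$ (Lemma~\ref{lem::metric_ball_stable_moment_bound}). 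Summing over the $O(\epsilon^{-1})$ balls gives $\var{\psi_\epsilon(z)}\to 0$, so the embedding is measurable with respect to the mm-space plus the outside surface; a second step (conditional independence of inside and outside given the mm-space, Lemma~\ref{lem::inside_outside_independent}) and scale-invariance remove the dependence on the outside.

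In short: keep the resampling idea and the filled metric balls, but replace your nested-countable-removability scheme by a single-scale, finitely-many-balls step plus the Efron--Stein/moment machinery. That is where the actual content lies.
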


\subsection{Proof strategy}
\label{subsec::outline}

Our proof of Theorem~\ref{thm::map_determines_embedding} is a variant of the proof given in
\cite[Section~10]{dms2014mating} for the fact that a certain pair of continuum random trees determines the conformal structure of the surface obtained by gluing those trees together.  Indeed, the reader might wish to read \cite[Section~10]{dms2014mating} before reading the current paper. The goal of this paper is essentially the same as the goal in \cite[Section~10]{dms2014mating} except that the pair of trees in question is defined in a different way --- in terms of the coordinates of the head of a Brownian snake process, instead of the coordinates of a correlated two dimensional Brownian motion (see Section~\ref{sec::preliminaries}).  In what follows, we will give an overview of the strategy to prove this result, which parallels the strategy used in \cite[Section~10]{dms2014mating}.

In both stories, we {\em first} give a procedure for constructing a quantum surface decorated by the pair of continuum trees (and a space-filling path forming an ``interface'' between the two trees) and {\em subsequently} show that in this construction, the pair of trees --- viewed simply as a coupled pair of random metric trees --- actually determine the conformal structure.  In both stories, the basic idea is to show that if one {\em conditions} on the pair of trees and resamples the conformal structure (conditioned on the pair of trees) then it is a.s.\ the case that nothing changes, i.e., the conformal structure is a.s.\ the same as before.

In both \cite[Section~10]{dms2014mating} and the current paper, one begins by fixing a value of $\gamma \in (0,2)$ and considering a particular kind of infinite volume $\gamma$-LQG surface, namely the so-called \emph{$\gamma$-quantum cone}.  We recall that in a $\gamma$-quantum cone, the origin looks like a ``typical'' point from the quantum measure (see Section~\ref{sec::preliminaries}).

In \cite[Section~10]{dms2014mating}, the value of $\gamma \in (0,2)$ is arbitrary and the curve $\eta'$ is an independent space-filling form of $\SLE_{16/\gamma^2}$ \cite{MS_IMAG4} that forms the interface between two continuum random trees; it is possible to make sense of those two trees as metric spaces, and the contour functions describing those trees are a pair of Brownian processes $(X, Y)$ that are correlated to an extent that depends on $\gamma$.

In the current setting, we fix $\gamma = \sqrt{8/3}$, and the curve is the interface between a continuum geodesic tree and its dual tree.  As explained in \cite{qlebm, qle_continuity}, one may endow an instance of the $\sqrt{8/3}$-quantum cone (with parameter given by $\gamma = \sqrt{8/3}$) with a metric $\qdist$ such that the resulting metric space has the law of TBP, as defined in \cite{cl2012brownianplane}. We recall that it is a.s.\ the case that for almost all points in TBP, there is a unique geodesic from that point to the special point ``at $\infty$'' in TBP and that for almost all pairs of points, the geodesics to $\infty$ merge in finite time, so that one has a continuum geodesic tree.  The law of this tree and its dual tree can be described directly.  First, the dual tree $\CT$ is an infinite continuum random tree (CRT) \cite{ald1991crt1,ald1991crt2,ald1993crt3} and has a contour function given by a Brownian motion $X$ indexed by $\R$. Let $\rho_\CT$ be the associated map from $\R$ to $\CT$. The geodesic tree then has a contour function given by a process $Y$, whose conditional law given $X$ is determined by the fact that $Y_{\rho_\CT^{-1}(s)}$ is a Brownian motion indexed by $s \in \CT$ (and its definition does not depend on which inverse of $\rho_{\CT}$ one uses, i.e., $Y_a = Y_b$ whenever $\rho_\CT(a) = \rho_{\CT}(b)$).  The map $\Gamma$ from a real time $t$ to the corresponding point in TBP is a space-filling curve, indexed by $t \in \R$, that fills all of TBP and traces a unit of area in a unit of time.  (We will review the definition of TBP more carefully in Section~\ref{subsec::map_plane}.)

In both \cite[Section~10]{dms2014mating} and the current paper, we parameterize this quantum cone by the whole plane $\C$ using the ``smoothed canonical embedding'' described in \cite[Section~10]{dms2014mating}. We recall that in this embedding, the origin and $\infty$ correspond to distinguished marked points of the quantum cone, the rotation is uniformly random, and the scaling is chosen in such a way that $(h, \phi) = 0$, where $\phi$ is a particular smooth rotationally invariant function from $\C$ to $\R$.

We emphasize that the joint law of the pair $(h,\Gamma)$ of the present paper is very different from the joint law of the pair $(h,\eta')$ from \cite[Section~10]{dms2014mating}.  Indeed, in the setting of \cite[Section~10]{dms2014mating} we have that $\eta'$ is a space-filling $\SLE_{16/\gamma^2}$ which is first sampled independently of $h$ and then reparameterized so that it fills one unit of quantum area in each unit of time.  In contrast, the construction of $\qdist$ implies that the curve $\Gamma$ is a.s.\ determined by $h$.  Moreover, $\Gamma$ serves to encode the metric structure associated with $h$ while $\eta'$ should be thought of as encoding the scaling limit of a statistical physics model (e.g., a UST instance) on a random planar map.

Let $\CU = \Gamma([0,1])$ denote the unit area portion of TBP traced by $\Gamma$ during the time interval $[0,1]$.  In light of the embedding described above, we may also view $\CU$ as a subset of $\C$. Now fix some large $n \in \N$ and for $k \in \{1,\ldots,n\}$, let $\CU_k = \Gamma([(k-1)/n,k/n])$ denote the portion traced during the interval $[(k-1)/n, k/n]$.

\begin{lemma}
\label{lem::brownianplaneradiusbound}
There exists a constant $c > 0$ such that the following is true.  For each $n \in \N$ and $k \in \{1,\ldots,n\}$, the probability that the diameter of $\CU_k$ with respect to the metric of TBP exceeds $a n^{-1/4}$ decays at least as fast as $e^{- c a^{4/3}}$. 
\end{lemma}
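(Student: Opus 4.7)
The plan is to combine the scale invariance of TBP with the Brownian-snake upper bound on map distances in order to reduce the claim to a standard Gaussian tail estimate. First I will use two distributional symmetries: the scaling property of TBP, namely $(M,\lambda d,\lambda^4\mu)\stackrel{d}{=}(M,d,\mu)$ for every $\lambda>0$, and the time-translation invariance of the snake process $(X,Y)$ (which holds because $X$ is a two-sided Brownian motion and, given $X$, $Y$ is a Gaussian process whose law depends only on increments of the tree distance). Setting $\lambda=n^{1/4}$, these together yield
\[
\diam(\CU_k)\stackrel{d}{=} n^{-1/4}\,\diam\bigl(\Gamma([0,1])\bigr),
\]
so it suffices to prove that for some $c>0$ and $C<\infty$ one has $\mathbf{P}[\diam(\Gamma([0,1]))>a]\le Ce^{-ca^{4/3}}$.

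Next I will apply the standard snake upper bound on TBP distances: for $s,t\in[0,1]$,
\[
\qdist(\Gamma(s),\Gamma(t)) \;\le\; Y_s+Y_t-2\!\min_{u\in[s\wedge t,\,s\vee t]}\! Y_u \;\le\; 2\bigl(\sup\nolimits_{[0,1]} Y - \inf\nolimits_{[0,1]} Y\bigr).
\]
Hence $\diam(\Gamma([0,1]))\le 2\,\mathrm{osc}_{[0,1]}(Y)$, and the problem reduces to a tail bound for the oscillation of the snake head over a unit time interval.

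To control $\mathrm{osc}_{[0,1]}(Y)$, I will condition on $X|_{[0,1]}$. The subtree $\rho_\CT([0,1])$ has tree diameter at most $\mathrm{osc}_{[0,1]}(X)$, and standard Brownian oscillation estimates give $\mathbf{P}[\mathrm{osc}_{[0,1]}(X)>r]\le Ce^{-cr^2}$. Conditional on $X|_{[0,1]}$ the process $Y|_{[0,1]}$ is, up to an additive constant, a centered Gaussian process whose increments have variances bounded by the tree distance and hence by the tree diameter $r$. A Borell--TIS inequality (or a direct chaining argument on the tree) then gives
\[
\mathbf{P}\bigl[\mathrm{osc}_{[0,1]}(Y)>a \,\big|\, \mathrm{osc}_{[0,1]}(X)=r\bigr] \;\le\; C e^{-c a^2/r}.
\]

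Finally, integrating these bounds,
\[
\mathbf{P}[\mathrm{osc}_{[0,1]}(Y)>a] \;\le\; \int_0^\infty Ce^{-ca^2/r}\cdot Cre^{-cr^2}\,dr,
\]
and the exponent $-ca^2/r-cr^2$ is maximized at $r\asymp a^{2/3}$ with optimal value of order $-a^{4/3}$, delivering the desired tail $Ce^{-c'a^{4/3}}$ after a standard Laplace-type estimate. The main delicate step is verifying the conditional sub-Gaussian maximal inequality for $Y$ on the tree with constants uniform in $r$; this reduces to controlling $\mathbf{E}[\mathrm{osc}_{[0,1]}(Y)\mid X]$ by an entropy estimate for Gaussian processes indexed by a tree of diameter $r$, after which the remainder of the argument is a pair of one-variable Gaussian estimates.
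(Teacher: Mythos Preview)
Your reduction via scaling is correct, but the justification you give for translation invariance is wrong: in the Brownian plane the driving process $X$ is built from two independent $\bes^3$ processes glued at the origin (see Section~\ref{subsec::map_plane}), not a two--sided Brownian motion. The reduction $d^*_{(k-1)/n,\,k/n}\stackrel{d}{=}n^{-1/4}d^*_{0,1}$ is nonetheless valid because the law of TBP is invariant under re-rooting along $\Gamma$ (this is the property the paper actually invokes), so the first step survives once you replace your reasoning.

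From that point on your route diverges from the paper's. The paper does not redo the snake tail estimate: it quotes Serlet's large deviation bound $\log\p[d^*\ge r]\le -c_0 r^{4/3}$ for the unit--area Brownian \emph{map}, and then transfers it to the Brownian \emph{plane} by bounding the Radon--Nikodym derivative between the $\bes^3$ law on $[0,\tfrac12]$ and the Brownian excursion law on $[0,\tfrac12]$, using the explicit marginal densities at time $\tfrac12$ and H\"older's inequality. This buys a short proof with no entropy arguments.

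Your direct Borell--TIS approach is in the right spirit but has a gap beyond the one you flag. The inequality
\[
\p\bigl[\mathrm{osc}_{[0,1]}(Y)>a\ \big|\ \mathrm{osc}_{[0,1]}(X)=r\bigr]\le Ce^{-ca^2/r}
\]
does \emph{not} follow from Borell--TIS using only the tree diameter $r$: Borell--TIS gives concentration around $\E[\mathrm{osc}(Y)\mid X]$, and that conditional median is \emph{not} uniformly controlled by $r$ alone --- it depends on the covering numbers of $\rho_\CT([0,1])$, which can be arbitrarily bad for a generic continuous $X$ with $\mathrm{osc}(X)=r$. To make your scheme work you must additionally control the modulus of continuity of the $\bes^3$ path (jointly with the event $\sup X\le a^{2/3}$) so that Dudley's entropy integral yields $\E[\mathrm{osc}(Y)\mid X]=o(a)$ on that event. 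That is doable, but it is exactly the substance of Serlet's argument and is not the one--line ``standard'' step you describe; if you want to avoid that work, the absolute continuity trick in the paper is a genuine shortcut.
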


Lemma~\ref{lem::brownianplaneradiusbound} (stated as Proposition~\ref{prop::path_chunk_diameter_bound} and proved below) is a corollary of a large deviations principle for the Brownian snake established in \cite{serlet_ldp} which states that if~$d^*$ is the diameter of TBM with unit area then $-\log \p[d^* \geq r]$ behaves like a constant times~$r^{4/3}$ as $r \to \infty$.  Lemma~\ref{lem::brownianplaneradiusbound} also implies that for each $\epsilon > 0$ fixed, the probability that even one of the $\CU_k$ has diameter larger than $n^{-1/4 + \epsilon}$ tends to zero superpolynomially as a function of $n$.  For each $k,n$, let $\wt \CU_k$ denote the filled metric ball of radius $n^{-1/4 + \epsilon}$ centered at $\Gamma(k/n)$.

\begin{lemma}
\label{lem::momentbound}
There exists constants $c > 0$, $\alpha > 4$, and a positive probability event $\CA$ which depends only on the quantum surface parameterized $\C \setminus \CU$ such that the following is true.  For each $n \in \N$ and $k \in \{1,\ldots,n\}$, we have that
\[ \E[ \diam(\wt{\CU}_k)^4 \one_{\CA} ] \leq c n^{-\alpha}.\]
\end{lemma}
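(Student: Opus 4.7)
The plan is to exploit the fact that $\Gamma(k/n)$ is distributed as a quantum-typical point of $\CU$, so that locally at $\Gamma(k/n)$ the field $h$ behaves like a $\sqrt{8/3}$-quantum cone insertion. By the standard scaling at such an insertion, the quantum measure of a Euclidean ball of radius $r$ centered at $\Gamma(k/n)$ is typically of order $r^{2-\gamma^2/2}=r^{2/3}$; equivalently, a set of quantum area $q$ containing $\Gamma(k/n)$ has Euclidean diameter of order $q^{3/2}$. With $q\asymp n^{-1+4\epsilon}$ --- the typical quantum area of a filled metric ball of quantum radius $n^{-1/4+\epsilon}$ in TBP, given the Hausdorff dimension $4$ of TBM --- this gives a Euclidean diameter of order $n^{-3/2+O(\epsilon)}$, whose fourth moment is of order $n^{-6+O(\epsilon)}$, comfortably below the target $n^{-4}$.

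I would take $\CA$ to be a suitable intersection of events measurable with respect to the exterior quantum surface: uniform upper and lower bounds on the circle averages of $h$ in a fixed annular neighborhood of $\partial\CU$, bounded quantum boundary length of $\partial\CU$, and bounded Euclidean extent of the exterior. Standard circle-average estimates and the structure of the $\sqrt{8/3}$-quantum cone give $\P(\CA)>0$, and the Markov property of the GFF propagates the bounds inward: conditionally on $\CA$, the law of $h|_\CU$ is absolutely continuous with respect to a reference Liouville law on a disk with bounded Radon--Nikodym derivative.

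The proof then combines three ingredients. First, Lemma~\ref{lem::brownianplaneradiusbound} gives stretched-exponential tails
\[
\P\bigl[\,\qdiam(\wt\CU_k)>a n^{-1/4}\,\bigr]\le e^{-ca^{4/3}},
\]
and hence a corresponding tail on the quantum area $\mu_h(\wt\CU_k)$ via the Hausdorff dimension of TBM. Second, the quantum cone scaling at $\Gamma(k/n)$, together with Gaussian control on the relevant circle averages inherited from $\CA$, converts this quantum area tail into a Euclidean diameter tail. Third, integrating this tail against $\diam^4$ yields $\E[\diam(\wt\CU_k)^4\one_\CA]\le cn^{-\alpha}$ for any $\alpha<6-24\epsilon$, and so some $\alpha>4$ provided $\epsilon$ is chosen small (say $\epsilon<1/12$).

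The main obstacle is making rigorous the heuristic that the field around the random point $\Gamma(k/n)$ really does look like a $\sqrt{8/3}$-quantum cone, uniformly in $k$ and on $\CA$. Concretely, one must show that the law of $h$ in a small neighborhood of $\Gamma(k/n)$ is absolutely continuous with respect to the law of a $\sqrt{8/3}$-quantum cone at the origin, with Radon--Nikodym derivative that is $L^p$-bounded on $\CA$ for some $p>1$. This should follow from the characterization of quantum-typical points together with the construction of the quantum cone and the smoothed canonical embedding, but requires careful bookkeeping to ensure the bounds are uniform in $k$ and interact correctly with the external conditioning built into $\CA$.
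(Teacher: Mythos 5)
Your route is genuinely different from the paper's and, as written, contains a gap at the step that carries the real weight. The paper proves this lemma by citing a pre-established moment estimate: \cite[Proposition~4.2]{qle_continuity} already gives, for the circle-average embedding, a bound of the form $\E[\diam(\qhull{y}{\epsilon})^4] \lesssim \epsilon^{a_1}$ with $a_1>4$ for the \emph{Euclidean} diameter of a filled metric ball of quantum radius $\epsilon$; the only remaining work is to pass from the circle-average embedding to the smooth canonical description, which is handled by noting that the relevant circle average is a bounded-variance Gaussian and by using the $(r,s)$-stability event to control the translation/rescaling factor uniformly over $t\in[-r^{-1},r^{-1}]$. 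You instead try to re-derive the moment bound from scratch via the chain ``quantum radius $\to$ quantum area $\to$ Euclidean diameter.''

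The problem is that both conversions in your chain are stated only at the level of \emph{median} (typical) scaling, and this does not yield a fourth-moment bound. The relation ``quantum area $q$ corresponds to Euclidean diameter $\asymp q^{3/2}$'' is a statement about the median of $\mu_h(B(0,r))\approx r^{2/3}e^{\gamma B_{-\log r}}$; the multiplicative factor $e^{\gamma B_{-\log r}}$ is heavy-tailed, and controlling $\E[\diam^4]$ requires controlling precisely the upper tail of the Euclidean diameter of a quantum metric ball of given quantum radius. That upper-tail/moment control is exactly the content of \cite[Proposition~4.2]{qle_continuity}; re-deriving it is not a bookkeeping exercise, and your heuristic does not substitute for it. The ``main obstacle'' you flag --- absolute continuity of the field near $\Gamma(k/n)$ with respect to a quantum cone --- is real but is actually the easier half (it is essentially what the $(r,s)$-stability event and the re-rooting invariance of the quantum cone handle); the harder half, which you treat as automatic, is the moment estimate itself. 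A smaller issue: your appeal to Lemma~\ref{lem::brownianplaneradiusbound} to control $\mu_h(\wt\CU_k)$ is misdirected, since $\wt\CU_k$ is a filled metric ball of radius $n^{-1/4+\epsilon}$ and hence has $\qdiam(\wt\CU_k)\le 2n^{-1/4+\epsilon}$ deterministically; what you actually need is an upper-tail bound on the quantum area of a metric ball of given quantum radius in TBP, which is a separate (though available) estimate. Your numerical conclusion ($\alpha$ close to $6$) is plausible and even consistent with what one would expect the optimal exponent to be, but the argument as given does not establish it. Note also that your proposed event $\CA$, built from circle averages near $\partial\CU$, is not obviously a function of the quantum surface $(\C\setminus\CU,h)$ alone (circle averages are embedding-dependent); the paper's $(r,s)$-stability is formulated precisely to avoid this issue and, moreover, to be invariant under cut-and-reglue operations, which is what makes it usable in the resampling argument of Lemma~\ref{lem::stable_is_determined}.
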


Lemma~\ref{lem::momentbound} is stated carefully as Lemma~\ref{lem::metric_ball_stable_moment_bound} and proved below.

Suppose that $K \subseteq \C$ is a compact set.  Recall that~$K$ is said to be \emph{conformally removable} if the following is true.  Suppose that $U,V \subseteq \C$ are open subsets with $K \subseteq U$.  If $\varphi \colon U \to V$ is a homeomorphism which is conformal on $U \setminus K$ then~$\varphi$ is conformal on~$U$.  The notion of conformal removability is important in the context of LQG, see e.g.\ \cite{SHE_WELD}.

\begin{lemma}
\label{lem::removableunion}
For each $n \in \N$, the set $\cup_{k=1}^n \partial \wt \CU_k$ is a.s.\ conformally removable.
\end{lemma}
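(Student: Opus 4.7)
The plan is to verify the hypothesis of the Jones--Smirnov theorem, which states that a compact set $K \subseteq \C$ is conformally removable whenever every connected component of $\C \setminus K$ is a H\"older domain. The goal is therefore to show that every complementary component of $\cup_{k=1}^n \partial \wt \CU_k$ in $\C$ is a H\"older domain.

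First I would treat each loop individually. The set $\wt \CU_k = \qhull{\Gamma(k/n)}{n^{-1/4+\epsilon}}$ is a filled metric ball in TBP, and under the canonical embedding of the $\sqrt{8/3}$-quantum cone into $\C$ it is a topological disk, so $\partial \wt \CU_k$ is a Jordan curve. From the $\QLE(8/3,0)$ construction of $\qdist$ in \cite{qlebm}, this boundary coincides with the outer boundary of a $\QLE(8/3,0)$ hull, whose law (conditional on its quantum boundary length) matches that of a quantum disk glued to an exterior quantum surface. Local absolute continuity comparisons with the outer boundary of $\SLE_6$, which has the law of $\SLE_{8/3}$-type curves, combined with the Rohde--Schramm theorem, imply that each $\partial \wt \CU_k$ is a.s.\ a H\"older curve bounding a H\"older domain on each side, so that by Jones--Smirnov each individual $\partial \wt \CU_k$ is conformally removable.

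Next I would pass from a single loop to the finite union by verifying the H\"older domain property for each connected component $W$ of $\C \setminus \cup_{k=1}^n \partial \wt \CU_k$. Such a $W$ is bounded by finitely many arcs of the individual loops, meeting at finitely many points. The inputs are (i)~the H\"older domain property for each individual $\partial \wt \CU_k$, (ii)~the bi-H\"older continuity of the identity map from $(\s^2, \dsphere)$ to $(\s^2, \qdist)$ from Theorem~\ref{thm::metric_completion}, which transfers metric regularity in $\qdist$ to Euclidean H\"older regularity for the individual arcs, and (iii)~a local analysis at each intersection point of two or more $\partial \wt \CU_{k_i}$, which controls the angle structure where arcs meet. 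Putting these together gives the required H\"older estimate comparing the hyperbolic metric on $W$ to the Euclidean distance to $\partial W$, and Jones--Smirnov then delivers the removability of the full union.

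The main obstacle I expect is precisely step~(iii): verifying that the H\"older domain condition is preserved at points where distinct $\partial \wt \CU_{k_i}$ meet, since the standard single-curve H\"older-domain arguments for $\SLE_\kappa$ do not address intersection structure. However, because $n$ is fixed and because the fractal dimension bounds for $\QLE(8/3,0)$ boundaries imply that only finitely many loops meet at any given point a.s., this reduces to a finite combinatorial task at each such intersection, and the bi-H\"older embedding rules out degenerate cusps. With this in hand, the Jones--Smirnov theorem yields the desired conformal removability of $\cup_{k=1}^n \partial \wt \CU_k$.
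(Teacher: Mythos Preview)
Your approach has a real gap at step~(iii), and the paper takes a different route that sidesteps it entirely.

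You want to show that every complementary component of $\cup_{k=1}^n \partial \wt\CU_k$ is a H\"older domain and then invoke Jones--Smirnov. But even if each $\partial \wt\CU_k$ individually bounds a H\"older domain on both sides, the complementary components of a finite \emph{union} of such curves need not be H\"older domains: two smooth Jordan curves meeting tangentially already produce a cusp domain failing the H\"older condition. Your proposed fix, that ``the bi-H\"older embedding rules out degenerate cusps,'' does not do the job. Theorem~\ref{thm::metric_completion} gives bi-H\"older equivalence between $\dsphere$ and $\qdist$, which controls how a \emph{single} metric object sits in the Euclidean plane; it says nothing about the angle at which two \emph{distinct} metric-ball boundaries meet. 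Carrying your argument through would require fine control on the intersection geometry of distinct $\QLE(8/3,0)$ hull boundaries, and no such result is available. The paper in fact notes (quoting \cite{js2000remove}) that it is not known in general when finite unions of conformally removable sets are removable, so this is a recognized obstruction, not a routine verification.

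The paper avoids all of this by proving a new removability statement, Theorem~\ref{thm::finite_unions_removable}: if $D_1,\ldots,D_k$ are H\"older domains each of whose boundaries is a \emph{simple} curve, then $\cup_j \partial D_j$ is conformally removable. The proof does not attempt to show that complementary components of the union are H\"older. Instead it reworks the Jones--Smirnov $W^{1,2}$/ACL estimate so that it applies locally along an arc of a single $\partial D_j$ inside an arbitrary neighborhood (Proposition~\ref{prop::acl_up_to_the_boundary}), and then argues by induction on $k$: given a homeomorphism $f$ conformal off $\cup_{j=1}^k \partial D_j$, one works near points of $\partial D_1 \setminus \cup_{j\ge 2} \partial D_j$, applies the local ACL estimate there to conclude $f$ is conformal across $\partial D_1$, and is left with $f$ conformal off $\cup_{j=2}^k \partial D_j$; then repeat. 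The simplicity hypothesis is exactly what makes the local arc argument go through, and it holds here since each $\partial \wt\CU_k$ is a.s.\ a Jordan curve bounding a H\"older domain. This inductive peeling is the key idea your proposal is missing.
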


Lemma~\ref{lem::removableunion} follows from Theorem~\ref{thm::finite_unions_removable} which is stated and proved in Appendix~\ref{app::removability}.  We recall that a simply connected domain $U \subseteq \C$ is said to be a \emph{H\"older domain} if there exists a conformal map $\varphi \colon \D \to U$ which is H\"older continuous up to $\partial \D$.  Establishing Lemma~\ref{lem::removableunion} will require us to rewrite, in a slightly more general way, some of the arguments for the conformal removability of H\"older domain boundaries that appear in \cite{js2000remove}.  (See Appendix~\ref{app::removability}.)

\begin{lemma}
\label{lem::insideoutsideindependent}
Given the mm-space structure of TBP, the conformal structure associated with the inside and the outside of a filled metric ball of a fixed radius centered at the origin are conditionally independent.
\end{lemma}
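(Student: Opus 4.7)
The plan is to derive the conditional independence from a spatial Markov property on the LQG side, combined with the fact (Theorem~\ref{thm::tbm_lqg} together with the locality of the $\qdist$ construction) that the mm-space is a measurable function of the quantum surface. Let $K = \qhull{0}{r}$, and write $\CS_{\rm in}$, $\CS_{\rm out}$ for the quantum surfaces obtained by restricting the $\sqrt{8/3}$-quantum cone to $K$ and to $\C \setminus K$ respectively. Let $M_{\rm in}$ and $M_{\rm out}$ be the corresponding metric measure spaces (each inheriting a natural boundary measure on $\partial K$ from the mm-space of TBP), and let $L$ be the common quantum boundary length of $\partial K$.

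The first input is the spatial Markov property of the $\sqrt{8/3}$-quantum cone cut along $\partial \qhull{0}{r}$: conditionally on $L$, the surfaces $\CS_{\rm in}$ and $\CS_{\rm out}$ are independent, with $\CS_{\rm in}$ a quantum disk of boundary length $L$ and $\CS_{\rm out}$ the complementary unbounded quantum surface, glued along quantum length. This follows from the identification of $\qhull{0}{r}$ with the filled radial $\QLE(8/3,0)$ hull grown from $\infty$ targeted at the origin established in \cite{qlebm, qle_continuity}. The second input is that by Theorem~\ref{thm::tbm_lqg}, $M_{\rm in}$ is a.s.\ a measurable function of $\CS_{\rm in}$, and $M_{\rm out}$ is a.s.\ a measurable function of $\CS_{\rm out}$. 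The third input is that $L$ is measurable with respect to $\sigma(M_{\rm in})$, since the Brownian disk carries an intrinsic boundary-length functional and $M_{\rm in}$ is a Brownian disk of boundary length $L$.

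To combine these, for bounded measurable $f, g$ write
\[
\E[f(\CS_{\rm in}) g(\CS_{\rm out}) \mid M_{\rm in}, M_{\rm out}] = \E\bigl[\, \E[f(\CS_{\rm in}) g(\CS_{\rm out}) \mid M_{\rm in}, M_{\rm out}, L] \,\bigm|\, M_{\rm in}, M_{\rm out} \,\bigr].
\]
By the Markov property together with the functional dependence $M_\bullet = M_\bullet(\CS_\bullet)$, the inner conditional expectation factors as $\E[f(\CS_{\rm in}) \mid M_{\rm in}, L]\,\E[g(\CS_{\rm out}) \mid M_{\rm out}, L]$. Since $L$ is $\sigma(M_{\rm in}, M_{\rm out})$-measurable, the outer expectation is trivial and this product is itself a function of $(M_{\rm in}, M_{\rm out})$, which is the desired conditional independence $\CS_{\rm in} \perp\!\!\!\perp \CS_{\rm out}$ given the mm-space structure of TBP.

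The main obstacle is the first input: one must justify the LQG Markov property at the \emph{deterministic} metric radius $r$, i.e., that $r$ is an admissible stopping time for the radial $\QLE(8/3,0)$ growth from $\infty$ and that the conditional law of the inside surface is a quantum disk with boundary length $L$ independent of the outside. Once this is extracted cleanly from the $\QLE(8/3,0)$ construction of the first two papers in the series, the remaining steps are essentially measure-theoretic bookkeeping; conformal removability of $\partial K$, needed to make ``restriction of $h$ to $K$'' well-defined as a quantum surface, is supplied by the earlier lemmas of this paper (in particular Lemma~\ref{lem::removableunion}).
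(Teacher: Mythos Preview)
Your argument establishes conditional independence of $\CS_{\rm in}$ and $\CS_{\rm out}$ given $\sigma(M_{\rm in},M_{\rm out})$, but the lemma asks for conditional independence given the $\sigma$-algebra $\CM$ generated by the \emph{full} mm-space of TBP. These are the same only if $\CM=\sigma(M_{\rm in},M_{\rm out})$, and you never address this. Conditional independence does not in general survive enlarging the conditioning $\sigma$-algebra, so the final sentence ``which is the desired conditional independence \ldots\ given the mm-space structure of TBP'' is an unjustified leap. In the paper's proof this equality of $\sigma$-algebras is the first and most substantive step: one must show that the global metric can be rebuilt from the two internal metrics. The paper does this by observing that every geodesic from a point $z$ to the origin decomposes into a segment in $\C\setminus\qhull{0}{s}$ followed by a segment in $\qhull{0}{s}$, and then invoking the Brownian-snake construction (via local absolute continuity with TBM) to conclude that the tree of such geodesics determines all distances. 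Without something of this kind your argument is incomplete.

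Two smaller points. First, you identify the ``main obstacle'' as the LQG Markov property at a deterministic metric radius; in fact this is the routine input, following directly from the $\QLE(8/3,0)$ construction, and the paper invokes it in one line. The genuinely delicate step is the one you omitted. Second, conformal removability of $\partial K$ is not needed to make sense of ``restriction of $h$ to $K$'' as a quantum surface --- restriction of a distribution to an open set is always well-defined --- so the closing remark is misplaced.
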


Lemma~\ref{lem::insideoutsideindependent} is stated carefully as Lemma~\ref{lem::inside_outside_independent} below.

Once we have the four lemmas above, Theorem~\ref{thm::map_determines_embedding} will follow from the argument used to prove the rigidity of the conformal structure given the peanosphere structure in \cite[Section~10]{dms2014mating}.

\subsection{Outline}

We will review basic definitions in Section~\ref{sec::preliminaries} and provide the proof of Theorem~\ref{thm::map_determines_embedding} in Section~\ref{sec::map_determines_field}.  Like the other papers in this series, this paper builds on several previous works by the current authors about LQG surfaces \cite{SHE_WELD, dms2014mating, quantum_spheres}, TBM \cite{map_making}, and imaginary geometry \cite{MS_IMAG,MS_IMAG2,MS_IMAG3,MS_IMAG4}. These in turn rely on sizable literatures on SLE, TBM, LQG and other topics, which are briefly reviewed in the first paper of the current series \cite{qlebm}.

\section{Preliminaries}
\label{sec::preliminaries}

In this section we will review a few preliminaries, including background on mm-spaces (Section~\ref{subsec::mm_spaces}), quantum surfaces (Section~\ref{subsec::quantum_surfaces}), a brief review of the Brownian map and plane (Section~\ref{subsec::map_plane}), and finally a few basic estimates that will be used in the proofs of our main theorems (Section~\ref{subsec::external_estimates}).

\subsection{Metric measure spaces}
\label{subsec::mm_spaces}

A \emph{metric measure space} (mm-space) is a metric space $(M,d)$ together with a measure $\mu$ on the Borel $\sigma$-algebra associated with $(M,d)$.  In this article, we will be considering a number of \emph{random} mm-spaces and we will also want to consider conditional probabilities where the $\sigma$-algebra in question is generated by such a random mm-space.  In order to make this precise, we need to specify a $\sigma$-algebra on mm-spaces.  There are several possibilities that one could choose from here.  In order to be consistent with \cite{map_making}, we will use the Borel $\sigma$-algebra generated by the so-called Gromov-weak topology.  (See \cite[Section~2.4]{map_making} and the references therein for additional detail.)

Suppose that $(M,d,\mu)$ is a mm-space with $\mu$ a probability measure and let $\E_\mu$ denote the expectation associated with the matrix of distances $d_{ij} = d(x_i,x_j)$ where $(x_j)$ is an i.i.d.\ sequence sampled from $\mu$.  The Gromov-weak topology is the weakest topology such that for each $k \in \N$ and each bounded, continuous function $\psi$ on $\R^{k^2}$ we have that the map
\[ (M,d,\mu) \mapsto \E_\mu[ \psi(M_k)]\]
is measurable where $M_k = (d_{ij})_{i,j=1}^k$.

More generally, the Gromov-weak topology on mm-spaces $(M,d,\mu,z_1,\ldots,z_n)$ with $n$ marked points is defined in the same way except we take the sequence $(x_j)$ so that $x_1=z_1,\ldots,x_n=z_n$ and the remaining elements of the sequence to be i.i.d.\ from $\mu$.  We refer the reader to \cite{grevenpfaffelhuberwinter} for additional background on the Gromov-weak topology.

We let $\CM$ be the Borel $\sigma$-algebra generated by the Gromov-weak topology.  More generally, we let $\CM^n$ be the Borel $\sigma$-algebra generated by the Gromov-weak topology with $n$ marked points.

We note that, using this topology, two mm-spaces $(M_i,d_i,\mu_i)$ for $i=1,2$ are equivalent if there exists a measure preserving map $\psi \colon M_1 \to M_2$ which restricts to an isometry on the support of $\mu_1$.

In \cite[Section~2.4]{map_making}, a number of properties of the Gromov-weak topology are recorded.  For example, it is shown there that the following events are Gromov-weak measurable:
\begin{itemize}
\item $(M,d)$ is compact (more precisely this means that $(M,d,\nu)$ is equivalent to some $(\wt M, \wt d, \wt \nu)$ for which $(\wt M, \wt d)$ is compact)  \cite[Proposition~2.12]{map_making},
\item $(M,d)$ is compact and geodesic \cite[Proposition~2.14]{map_making}, and 
\item $(M,d)$ is geodesic and homeomorphic to $\s^2$ \cite[Proposition~2.15]{map_making}.
\end{itemize}

We note that TBM with unit area can be viewed as a random variable taking values in the space of mm-spaces using the $\sigma$-algebra $\CM$ as described just above.

We finish this subsection with two observations:
\begin{itemize}
\item If $(M,d,\mu)$ is an mm-space where $0 < \mu(M) < \infty$, then we can let $\mu^* = \mu/\mu(M)$ and encode $(M,d,\mu)$ by the pair consisting of $(M,d,\mu^*)$ and $\mu(M)$.  Therefore such a space fits into the aforementioned framework, where we add an extra variable to keep track of the total mass $\mu(M)$ and the natural $\sigma$-algebra in this context is the product $\sigma$-algebra associated with $\CM$ and the Borel $\sigma$-algebra on~$\R$.  We note that TBM with random area can be viewed as a random variable taking values in the product of the space of mm-spaces as described above and~$\R$ with this $\sigma$-algebra.
\item Suppose that $(M,d,\mu,z)$ is a marked mm-space where $\mu(M) = \infty$ but $0 < \mu(B(z,r)) < \infty$ for all $r > 0$ and $M = \cup_{r > 0} B(z,r)$.  For each $r > 0$, let $\mu_r^*$ be given by $\mu|_{B(z,r)} / \mu(B(z,r))$.  For $r > 0$, each of the spaces $(B(z,r),d,\mu_r^*,z)$ together with $\mu(B(z,r))$ fits into the framework described just above and we can view such a space $(M,d,\mu,z)$ as a sequence $(B(z,n),d,\mu_n^*,z)$ together with $\mu(B(z,n))$ each of which fits into the framework from just above.  This leads to a natural $\sigma$-algebra in this setting and this is the space in which we will view TBP as taking values.
\end{itemize}

\subsection{Quantum surfaces}
\label{subsec::quantum_surfaces}

Suppose that $h$ is an instance of some form of the Gaussian free field (GFF) on a planar domain $D$ and fix $\gamma \in (0,2)$.  The $\gamma$-Liouville quantum gravity (LQG) measure associated with $h$ is formally given by $\mu_h = e^{\gamma h} dz$ where $dz$ denotes Lebesgue measure on $D$.  Since $h$ is a distribution and not a function, this expression requires interpretation and can be made rigorous using a regularization procedure.  Similarly, if $L$ is a linear segment of the boundary, then the $\gamma$-LQG boundary measure is given by $\nu_h = e^{\gamma h/2} dz$ where here $dz$ denotes Lebesgue measure on $L$.  This expression is made rigorous using the same regularization procedure used to construct $\mu_h$.

Suppose that $\wt{D}$ is another planar domain and $\varphi \colon \wt{D} \to D$ is a conformal transformation.  If one defines
\begin{equation}
\label{eqn::quantum_equivalence}
 \wt{h} = h \circ \varphi + Q \log|\varphi'| \quad\text{where}\quad Q = \frac{2}{\gamma} + \frac{\gamma}{2}	
\end{equation}
then $\mu_{\wt{h}}(A) = \mu_h(\varphi(A))$ (resp.\ $\nu_{\wt{h}}(A) = \nu_h(\varphi(A))$) for all $A \subseteq \wt{D}$ (resp.\ $A \subseteq \partial \wt{D}$) Borel.  This in particular allows one to make sense of $\nu_h$ on segments of $\partial D$ which are not necessarily linear.  A quantum surface is an equivalence class where a representative consists of a pair $(D,h)$ and two pairs $(D,h)$, $(\wt{D},\wt{h})$ are said to be equivalent if~$h$ and~$\wt{h}$ are related as in~\eqref{eqn::quantum_equivalence}.  We refer to a representative $(D,h)$ of a quantum surface as an \emph{embedding}.

More a generally, a \emph{marked quantum surface} is an equivalence class where a representative consists of a triple $(D,h,\ul{z})$ where $\ul{z} = (z_1,\ldots,z_k)$ is a collection of points in $\ol{D}$ and two marked quantum surfaces $(D,h,\ul{z})$, $(\wt{D},\wt{h},\ul{\wt{z}})$ are said to be equivalent if $h$, $\wt{h}$ are related as in~\eqref{eqn::quantum_equivalence} and $\varphi(\wt{z}_j) = z_j$ for all $1 \leq j \leq k$.

In this work, we shall be primarily interested in several types of quantum surfaces: quantum wedges, cones, disks, and spheres.  As the definitions of these surfaces were reviewed in earlier works in this series, we will not repeat them here and instead refer the reader to \cite{dms2014mating}.

\subsection{The Brownian map and plane}
\label{subsec::map_plane}

We now recall the definition of the Brownian map (TBM) \cite{lg2013uniqueness,mier2013bm} and the Brownian plane (TBP) \cite{cl2012brownianplane}.  We first begin by reminding the reader of the construction of the so-called Brownian snake process $Y$ on $[0,1]$.  One can sample from the law of the Brownian snake using the following two-step procedure:
\begin{enumerate}
\item Sample a Brownian excursion $X \colon [0,1] \to \R_+$ with $X(0) = X(1) = 0$ and let
\begin{equation}
\label{eqn::m_e_def}
	 m_X(s,t) = \inf_{u \in [s,t]} X(u) \quad\text{for}\quad s < t.
\end{equation}
\item Given $X$, sample a mean-zero Gaussian process $Y$ with covariance given by $\cov{Y_s}{Y_t} = m_X(s,t)$ for $s < t$.
\end{enumerate}

TBM (with unit area) is the random mm-space which is constructed from the Brownian snake as follows.  For $s,t \in [0,1]$, we set
\[ d^\circ(s,t) = Y_s + Y_t - 2\max\left( \inf_{u \in [s,t]} Y_u,\inf_{u \in [t,s]} Y_u \right).\]
Here, we assume without loss of generality that $s < t$ and take $[t,s] = [0,s] \cup [t,1]$.  Let $\CT$ be the instance of the continuum random tree (CRT) associated with $X$ and let $\rho_\CT \colon [0,1] \to \CT$ be the projection map.  For $a,b \in \CT$, we then set
\begin{equation}
\label{eqn::d_circ_def}
	d_\CT^\circ(a,b) = \inf\{ d^\circ(s,t) : \rho_\CT(s) = a, \rho_\CT(t) = b\}.
\end{equation}
Finally, for $a,b \in \CT$, we set
\begin{equation}
\label{eqn::d_metric_def}
d(a,b) = \inf\left\{ \sum_{j=1}^k d_\CT^\circ(a_{j-1},a_j) \right\}
\end{equation}
where the infimum is taken over all $k \in \N$ and $a_0 = a,a_1,\ldots,a_k=b$ in $\CT$.  The unit-area Brownian map is the mm-space $(M,d,\mu)$ where $(M,d)$ is given by the metric quotient $\CT / \cong$ where $a \cong b$ if and only if $d(a,b) = 0$ and $\mu$ is the pushforward of Lebesgue measure from $[0,1]$ to $(M,d)$ under the projection map $\rho \colon [0,1] \to M$.  This construction naturally associates with $(M,d,\mu)$ a space-filling, non-crossing path $\Gamma$ where $\Gamma(t) = \rho(t)$ for $t \in [0,1]$.  (In fact, $\Gamma$ is the peano curve which snakes between a tree of geodesics and its dual tree on $(M,d)$.)  In particular, $(M,d,\mu)$ is naturally marked by $\Gamma(0)$.

Recalling that $Y_s$ represents the distance between $\rho(s)$ and the root of the geodesic tree in TBM, i.e., $\rho(s^*)$ where $s^*$ is the unique value in $[0,1]$ such that $Y_{s^*} = \inf_{s \in [0,s]} Y_s$, it follows that the diameter $d^*$ of $(M,d)$ satisfies the bounds
\[ \sup_{s \in [0,1]} Y_s - \inf_{s \in [0,1]} Y_s \leq d^* \leq 2\left(\sup_{s \in [0,1]} Y_s - \inf_{s \in [0,1]} Y_s\right).\]
The precise asymptotics for the tail of $d^*$ were determined by Serlet \cite[Proposition~14]{serlet_ldp}.  We restate a variant of this result here because it will be important for our later arguments.  (The result in \cite{serlet_ldp} in fact includes matching upper and lower bounds.)
\begin{proposition}[\cite{serlet_ldp}]
\label{prop::serlet}
There exists a constant $c_0 > 0$ such that, as $r \to \infty$, we have that
\begin{equation}
\label{eqn::serlet_ldp}
	\log \p[ d^* \geq r] \leq -c_0 r^{4/3}.
\end{equation}
\end{proposition}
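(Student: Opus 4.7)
The plan is to reduce the tail estimate on $d^*$ to a tail estimate on the supremum of the Brownian snake head $Y$, and then exploit the scaling of the Brownian snake together with its connection with super-Brownian motion (SBM).

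First, the sandwich bound $d^* \leq 2(\sup_s Y_s - \inf_s Y_s)$ recorded just above the proposition gives $\p[d^* \geq r] \leq \p[\sup_s Y_s - \inf_s Y_s \geq r/2]$. Conditionally on the lifetime excursion $X$, the snake head $Y$ is a centered Gaussian process, hence its law is symmetric under $Y \mapsto -Y$; consequently $\p[\sup_s Y_s - \inf_s Y_s \geq r/2] \leq 2\,\p[\sup_s Y_s \geq r/4]$. It therefore suffices to prove $\p[\sup_{s \in [0,1]} Y_s \geq r] \leq C e^{-c_0 r^{4/3}}$ for suitable constants and all $r$ large.

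The main input is the scaling property of the snake: if $(X,Y)$ is based on a unit Brownian excursion, then $(T^{1/2} X_{\cdot/T},\, T^{1/4} Y_{\cdot/T})$ is distributed as the snake driven by a Brownian excursion of length $T$. Thus the snake head range scales like $T^{1/4}$, and a unit-length snake has range of order $1$. Using the classical connection with super-Brownian motion, under the It\^o excursion measure $\mathbf{n}$ one obtains the polynomial tail $\mathbf{n}[\sup Y \geq r] \asymp r^{-2}$, which can be read off from the explicit one-dimensional ODE solution encoding the Laplace functional of SBM started from a single particle (the equation $\tfrac{1}{2} u'' = 2u^2$ with blow-up boundary conditions).

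To upgrade this polynomial tail to the stretched-exponential tail under conditioning on unit length, one balances two competing costs via a large-deviations argument. By scaling, forcing a sub-excursion of duration $\epsilon$ within the unit excursion to carry a snake range of order $r$ is equivalent to asking a unit-length snake to have range $r\,\epsilon^{-1/4}$; combining this with the It\^o cost of creating such a long sub-excursion yields a product of costs minimized at $\epsilon \sim r^{-4/3}$, which produces the exponent $4/3$ in the final stretched-exponential.

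The main obstacle is precisely obtaining this exponent. Naive Gaussian concentration applied branch-by-branch along the CRT yields the overly strong $r^4$ (essentially because the snake head along a single tree path is Brownian), while the unconditional It\^o tail alone is merely polynomial. The sharp large-deviations principle of \cite{serlet_ldp} resolves this interpolation through a variational formula for the rate function of the snake, and evaluating it on configurations achieving a prescribed snake range gives the correct $4/3$ scaling.
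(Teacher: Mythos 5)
The paper does not supply a proof of this proposition at all: it restates a variant of \cite[Proposition~14]{serlet_ldp} and cites Serlet directly, so the only ``proof'' in the paper is the citation. Your preliminary reductions, on the other hand, are all correct: the sandwich bound $d^* \leq 2(\sup_s Y_s - \inf_s Y_s)$ is the inequality recorded just above the proposition, the conditional (on $X$) symmetry $Y \mapsto -Y$ reduces matters to the one-sided tail of $\sup Y$, the $T^{1/4}$ scaling of the snake head is standard, and the polynomial tail $\mathbf{n}[\sup Y \geq r] \asymp r^{-2}$ under the It\^o excursion measure is indeed the classical consequence of the blow-up solution of $\tfrac12 u'' = 2u^2$.

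The balancing step at the end, however, has a genuine gap. Optimizing over the \emph{duration} $\epsilon$ of a sub-excursion carrying the anomalous range does not produce a stretched exponential: a unit excursion contains sub-excursions of every small duration for free (roughly $\epsilon^{-1/2}$ of them at scale $\epsilon$), so there is no ``It\^o cost of creating such a long sub-excursion'' to multiply against. If instead you feed the $r^{-2}$ tail into a sub-excursion of duration $\epsilon$ rescaled to have range $r\epsilon^{-1/4}$, the per-excursion cost is $\asymp r^{-2}\epsilon^{1/2}$; summed over the $\asymp\epsilon^{-1/2}$ available sub-excursions, this gives back the polynomial $r^{-2}$, independent of $\epsilon$, with no stretched exponential. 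The heuristic that does yield the exponent $4/3$ (and which Serlet's variational formula rigorizes) balances \emph{tree height} against a Gaussian deviation of the snake head along a single branch: the CRT coded by a normalized excursion has height $\geq h$ with probability $\approx e^{-2h^2}$, and conditionally the head restricted to a branch of length $h$ is a linear Brownian motion, so reaching level $r$ costs $\approx e^{-r^2/(2h)}$; optimizing $r^2/h + h^2$ over $h$ gives $h \asymp r^{2/3}$ and the rate $\asymp r^{4/3}$. (A side note: the ``naive Gaussian concentration'' you mention on a fixed, typical tree gives $e^{-cr^2}$, not $e^{-cr^4}$, since $\mathrm{Var}(Y_s) = X_s$ is bounded by the tree height, which is $O(1)$.) Since you, like the paper, ultimately defer to Serlet for the rigorous rate function, the conclusion is not in doubt, but the intermediate derivation as written does not actually produce the exponent.
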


We observe that if we have a marked instance of TBM $(M,d,\mu,\Gamma(0))$, then for each $t \in [0,1]$ fixed we have that $(M,d,\mu,\Gamma(0)) \stackrel{d}{=} (M,d,\mu,\Gamma(t))$.  That is, the law of TBM is invariant under re-rooting according to $\Gamma$ (note that $\Gamma(0)$ is sometimes called the dual root of TBM).  In fact, the conditional law of $\Gamma(0)$ given $(M,d,\mu)$ is equal to $\mu$.

In this work, we will be interested in TBP \cite{cl2012brownianplane}.  We will now remind the reader of the construction of TBP and explain why Serlet's bound can be used to deduce diameter bounds for the corresponding space-filling path.

Following \cite{cl2012brownianplane}, one can produce a sample from the law of TBP as follows:
\begin{enumerate}
\item Sample the process $X \colon \R \to \R_+$ by taking $X_t = R_t$ for $t \geq 0$ (resp.\ $X_t = R_{-t}'$ for $t \leq 0$) where $R,R'$ are independent $3$-dimensional Bessel processes starting from~$0$.
\item Given~$X$, let~$Y$ be the mean-zero Gaussian process with covariance $\cov{Y_s}{Y_t} = m_X(s,t)$ where $m_X$ is as in~\eqref{eqn::m_e_def} (with $X$ as in the previous item).
\end{enumerate}
We note that the two $3$-dimensional Bessel processes in the definition of $X$ correspond to zooming in near the start and end of the Brownian excursion in the definition of TBM.

The mm-space structure of TBP is constructed in a manner which is analogous to TBM.  Also associated with it is a space-filling path $\Gamma$ which is given by projecting the identity map on $\R$.  Moreover, the law of TBP is invariant under re-rooting according to $\Gamma$, as it is the infinite volume limit of TBM which enjoys the corresponding property.  See, e.g., \cite[Proposition~4]{cl2012brownianplane}.

We are now going to explain why Serlet's tail bound (Proposition~\ref{prop::serlet}) extends to bound the diameter $d_{s,t}^*$ of the sets $\Gamma([s,t])$ for $s < t$ of the space-filling path on TBP.

\begin{proposition}
\label{prop::path_chunk_diameter_bound}
There exists a constant $c_0 > 0$ such that, as $r \to \infty$, we have that
\begin{equation}
\label{eqn::space_filling_chunk_size}
	\log \p[ d_{0,1}^* \geq r] \leq -c_0 r^{4/3}.
\end{equation}
More generally, for each $s < t$ we have that
\begin{equation}
\label{eqn::space_filling_chunk_size_general}
	\log \p[ d_{s,t}^* \geq r] \leq -c_0 (t-s)^{-1/3} r^{4/3}.
\end{equation}
\end{proposition}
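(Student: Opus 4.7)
The plan is to reduce \eqref{eqn::space_filling_chunk_size_general} to the case $s=0$, $t=1$ via re-rooting and scaling, derive a deterministic upper bound for $d_{0,1}^*$ in terms of the oscillation of $Y$, and then estimate that oscillation by conditioning on the driving process $X$.

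First, by the re-rooting invariance of TBP cited just above, $d_{s,t}^*$ has the same law as $d_{0,t-s}^*$. The driving pair $(X,Y)$ of TBP satisfies the scaling relation $(X_{\lambda u}, Y_{\lambda u})_{u \in \mathbf{R}} \stackrel{d}{=} (\lambda^{1/2} X_u,\, \lambda^{1/4} Y_u)_{u \in \mathbf{R}}$, which follows from the scaling of the $3$-dimensional Bessel process together with the conditional Gaussian covariance structure of the snake head. Applying this with $\lambda = t-s$ gives $d_{0,t-s}^* \stackrel{d}{=} (t-s)^{1/4} d_{0,1}^*$, so \eqref{eqn::space_filling_chunk_size} applied with $r$ replaced by $r/(t-s)^{1/4}$ yields \eqref{eqn::space_filling_chunk_size_general}.

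Second, I would establish the deterministic upper bound
\[
d_{0,1}^* \leq 2\Bigl(\sup_{u \in [0,1]} Y_u - \inf_{u \in [0,1]} Y_u\Bigr).
\]
Indeed, for $u,v \in [0,1]$ with $u < v$ one has $d(\Gamma(u),\Gamma(v)) \leq d^\circ(u,v) \leq Y_u + Y_v - 2\inf_{w \in [u,v]} Y_w \leq 2\,\mathrm{osc}_{[0,1]}(Y)$, using that the $\max$ appearing in the definition of $d^\circ$ dominates its first argument and that the infimum over $[u,v]$ is at least the infimum over $[0,1]$. It therefore suffices to show $\log \mathbf{P}[\mathrm{osc}_{[0,1]}(Y) \geq r] \leq -c r^{4/3}$ for some $c > 0$ and all large $r$.

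For the oscillation bound I would condition on $X|_{[0,1]}$, which in the TBP setting is a $3$-dimensional Bessel process started at $0$. Given $X$, the process $Y|_{[0,1]}$ is centered Gaussian with $\mathrm{Var}(Y_s \,|\, X) = X_s$ and pseudo-metric $\sqrt{\mathrm{Var}(Y_s - Y_t \,|\, X)} = \sqrt{X_s + X_t - 2\inf_{[s,t]} X}$. By the Borell-TIS inequality combined with a Dudley entropy estimate on $\mathbf{E}[\sup Y \,|\, X]$, which is of order $\sqrt{\sup_{[0,1]} X}$ thanks to the H\"older regularity of $X$, there exists $C > 0$ such that, on $\{\sup_{[0,1]} X \leq M\}$,
\[
\mathbf{P}\bigl[\mathrm{osc}_{[0,1]}(Y) \geq r \,\bigm|\, X\bigr] \leq C \exp\!\Bigl(-\frac{r^2}{CM}\Bigr) \quad \text{for } r \geq C\sqrt{M}.
\]
Since the Bessel tail gives $\mathbf{P}[\sup_{[0,1]} X \geq M] \leq C\exp(-c M^2)$ for $M \geq 1$, the choice $M = r^{2/3}$ balances the two exponents and yields $\mathbf{P}[\mathrm{osc}_{[0,1]}(Y) \geq r] \leq C'\exp(-c' r^{4/3})$, completing \eqref{eqn::space_filling_chunk_size}.

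The main obstacle is the conditional Gaussian concentration step: one must verify simultaneously that $\mathbf{E}[\sup Y \,|\, X]$ is $O(\sqrt{\sup X})$ and that the conditional deviations of $\sup Y$ are sub-Gaussian with variance proxy $\sup X$, with constants uniform enough in $X$ that the $M = r^{2/3}$ optimization goes through. A cleaner alternative that sidesteps these estimates is to reduce directly to Serlet's Proposition~\ref{prop::serlet}: conditionally on $X_1$, the process $X|_{[0,1]}$ is a Bessel bridge of length one, while the Brownian excursion driving unit-area TBM is the Bessel bridge from $0$ to $0$ on $[0,1]$, so a Radon-Nikodym comparison between these two bridge laws should allow Serlet's TBM tail estimate to be transferred to the TBP setting at a controllable cost, giving the desired $r^{4/3}$ exponent.
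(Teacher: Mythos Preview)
Your reduction steps --- re-rooting plus scaling to get \eqref{eqn::space_filling_chunk_size_general} from \eqref{eqn::space_filling_chunk_size}, and the deterministic bound $d_{0,1}^* \leq 2\,\mathrm{osc}_{[0,1]}(Y)$ --- match the paper exactly. Where you diverge is in bounding the oscillation of $Y$.

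Your primary route (Borell--TIS plus Dudley, conditioned on $X$, then optimize against the Bessel tail of $\sup X$) is genuinely different from the paper's argument, and the obstacle you flag is real: the assertion that $\E[\sup Y \mid X] = O(\sqrt{\sup X})$ on $\{\sup X \leq M\}$ is \emph{not} true uniformly in $X$. A path $X$ bounded by $M$ can still encode a tree with arbitrarily many branches of height close to $M$, and then $\sup Y$ behaves like the maximum of many independent $N(0,M)$ variables, picking up a $\sqrt{\log(\text{branches})}$ factor. To salvage the approach you would need to also restrict to an event controlling the H\"older constant of $X$ (which does have good tails for Bessel-3), and then optimize over both parameters. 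This can be made to work but is messier than what the paper does.

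Your alternative --- transfer Serlet's TBM bound via a Radon--Nikodym comparison of the driving processes --- \emph{is} the paper's approach, with one correction: you cannot compare the Bessel-3 process and the Brownian excursion on $[0,1]$ directly, because the excursion is pinned at $0$ at time $1$ and the two laws are mutually singular there. The paper instead compares on $[0,1/2]$, where both $X_{1/2}$-densities are explicit and mutually absolutely continuous (and, conditionally on $X_{1/2}$, the two processes agree in law on $[0,1/2]$); a H\"older inequality with the explicit density ratio then pushes Serlet's $r^{4/3}$ exponent through, and scaling recovers the full interval $[0,1]$. This is cleaner and avoids the entropy bookkeeping entirely.
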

\begin{proof}
We note that~\eqref{eqn::space_filling_chunk_size_general} follows from~\eqref{eqn::space_filling_chunk_size} and the scaling properties of TBP.  In particular, we have that $d_{s,t}^* \stackrel{d}{=} (t-s)^{1/4} d_{0,1}^*$ for all $s < t$.  This leaves us to prove~\eqref{eqn::space_filling_chunk_size}.

We will deduce~\eqref{eqn::space_filling_chunk_size} from Proposition~\ref{prop::serlet}.  By the definition~\eqref{eqn::d_circ_def}, \eqref{eqn::d_metric_def} of the metric for TBP and the definition of the space-filling path $\Gamma$, we know that
\begin{equation}
\label{eqn::path_diam_bound}
d_{0,1}^* \leq 2 \left(\sup_{s \in [0,1]} Y_s - \inf_{s \in [0,1]} Y_s \right).
\end{equation}
Therefore to extract~\eqref{eqn::space_filling_chunk_size} from~\eqref{eqn::serlet_ldp}, we just need to control the Radon-Nikodym derivative of the law of a $3$-dimensional Bessel process run for time $1/2$ with respect to the law of a Brownian excursion run for time $1/2$.  Indeed, this will imply the analog of~\eqref{eqn::path_diam_bound} with $[0,1/2]$ in place of $[0,1]$.  The case with $[0,1]$ then follows from the case with $[0,1/2]$ by applying scaling.

We begin by observing that if $R$ is a $3$-dimensional Bessel process starting from $0$ and $X$ is a Brownian excursion from $0$ to $0$, then for each $x \geq 0$ we have that the conditional law of $R|_{[0,1/2]}$ given $R_{1/2} = x$ is equal to the conditional law of $X|_{[0,1/2]}$ given $X_{1/2} = x$.  Therefore we just have to control the Radon-Nikodym derivative of the law of $R_{1/2}$ with respect to the law of $X_{1/2}$.

By \cite[Chapter~XII, Theorem~4.1]{RY04} and \cite[Chapter~XI]{RY04} the density of the laws of $X_{1/2}$ and $R_{1/2}$, respectively, with respect to Lebesgue measure on $\R_+$ are given by the following expressions evaluated at $t=1/2$:
\begin{equation}
\label{eqn::densities}
\frac{2 x^2}{(2\pi t^3(1-t)^3)^{1/2}} \exp\left( - \frac{x^2}{2t} - \frac{x^2}{2(1-t)} \right) \quad\text{and}\quad \frac{2 x^2}{(2 \pi t^3)^{1/2}} \exp\left(- \frac{x^2}{2t} \right).
\end{equation}
Therefore the claimed bounds follow by using the explicit form of the Radon-Nikodym derivative and applying H\"older's inequality.  More precisely, if $r > 0$ and $A_r$ is the event that the right hand side of~\eqref{eqn::path_diam_bound} is at least $r$, with $[0,1/2]$ in place of $[0,1]$, and $\mu$ (resp.\ $\nu$) denotes the law of TBM (resp.\ TBP), then for conjugate exponents $p,q \geq 1$ we have that
\[ \nu[A_r] = \int Z \one_{A_r} d \mu \leq \left( \int  Z^p d\mu \right)^{1/p} \mu[A_r]^{1/q}\]
where $Z$ is given by the second expression in~\eqref{eqn::densities} divided by the first with $t=1/2$.  Proposition~\ref{prop::serlet} gives us the rate of decay for $\mu[A_r]$ as $r \to \infty$; note that taking the $1/q^{\rm th}$ power only changes the constant in  front of the term $r^{4/3}$.  The result thus follows because by inspecting the explicit form of $Z$, we see that we can choose $p > 1$ sufficiently close to $1$ so that $\int Z^p d\mu < \infty$.
\end{proof}

The boundary of the range $\Gamma([s,t])$ of the space-filling path $\Gamma$ on the interval $[s,t]$ can be divided into four boundary segments: two branches of the geodesic tree (i.e., geodesics) and two branches of the dual tree.  These sets are not known to be conformally removable, after embedding into $\C$ using the $\QLE(8/3,0)$ metric ({\it Update:} The conformal removability of LQG geodesics was recently proved in \cite{mq2020geodesics}, after this work appeared.)  For this reason, in our proof of Theorem~\ref{thm::map_determines_embedding} we will cover each such chunk by a filled metric ball and instead use the removability of filled metric ball boundaries \cite{ms2013qle}.  (This is in contrast to \cite[Section~10]{dms2014mating}, in which it was known that the boundaries of the chunks of the space-filling path were conformally removable.)

\subsection{Variance and distortion estimates}
\label{subsec::external_estimates}

We now record a variance bound and a distortion estimate which will be important for the proof of Theorem~\ref{thm::map_determines_embedding}.  The first estimate that we will state is a general bound, the so-called Efron-Stein inequality \cite{efronstein}, on the variance of a function of independent random variables.

\begin{lemma}
\label{lem::variance_bound}
Let $A = A(X_1,\ldots,X_n)$ be a function of $n$ independent random variables $X_1,\ldots,X_n$ such that $\ex{ A^2 } < \infty$.  Then
\begin{equation}
\label{eqn::variance_bound}	
\var{A} \leq \sum_{i=1}^n \ex{ \var{ A \giv \CF_i}}  \quad\text{where}\quad \CF_i = \sigma(X_j : j \neq i).
\end{equation}
\end{lemma}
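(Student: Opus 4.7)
The plan is to establish the Efron--Stein bound by the classical Doob martingale decomposition. First I would introduce the martingale obtained by successively revealing $X_1, \ldots, X_n$ and use the $L^2$-orthogonality of its increments to rewrite $\var{A}$ as a sum of squared-increment expectations. Then, for each term in that sum, I would use the independence of the $X_i$'s together with the conditional Jensen inequality to bound the squared martingale increment by the leave-one-out conditional variance appearing on the right-hand side of~\eqref{eqn::variance_bound}.

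More concretely, set $A_0 = \E[A]$ and $A_i = \E[A \mid X_1, \ldots, X_i]$ for $1 \leq i \leq n$, so that $A_n = A$ almost surely and, with respect to the filtration $\CG_i = \sigma(X_1, \ldots, X_i)$, the sequence $(A_i)_{i=0}^n$ is a square-integrable martingale (square integrability follows from $\ex{A^2} < \infty$ by Jensen). Setting $D_i = A_i - A_{i-1}$, the telescoping identity $A - \E[A] = \sum_{i=1}^n D_i$ together with $L^2$-orthogonality of martingale differences yields
\[ \var{A} = \sum_{i=1}^n \ex{D_i^2}, \]
so it suffices to show that $\ex{D_i^2} \leq \ex{\var{A \giv \CF_i}}$ for each $i$.

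To this end, let $\wt{A}_i = \E[A \giv \CF_i]$, which is a function of $(X_j)_{j \neq i}$ alone. The independence of $X_{i+1}, \ldots, X_n$ from $\CG_i$ gives $\E[\wt{A}_i \giv \CG_i] = \E[\wt{A}_i \giv \CG_{i-1}]$, and since $\CG_{i-1} \subset \CF_i$ the tower property yields $\E[\wt{A}_i \giv \CG_{i-1}] = A_{i-1}$; hence $D_i = \E[A - \wt{A}_i \giv \CG_i]$. The conditional Jensen inequality then gives $D_i^2 \leq \E[(A - \wt{A}_i)^2 \giv \CG_i]$, and taking expectations together with the identity $\E[(A - \wt{A}_i)^2 \giv \CF_i] = \var{A \giv \CF_i}$ (valid because $\wt{A}_i = \E[A \giv \CF_i]$) produces
\[ \ex{D_i^2} \leq \ex{(A - \wt{A}_i)^2} = \ex{\var{A \giv \CF_i}}. \]
Summing over $i$ yields~\eqref{eqn::variance_bound}. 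The only subtle step is the identity $\E[\wt{A}_i \giv \CG_i] = A_{i-1}$, which is precisely where the independence hypothesis on the $X_j$ enters essentially; without independence, the Efron--Stein bound can fail.
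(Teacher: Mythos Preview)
Your proof is correct: the Doob martingale decomposition followed by the bound $D_i = \E[A - \wt A_i \mid \CG_i]$ and conditional Jensen is the standard route to the Efron--Stein inequality, and every step you wrote checks out (including the key identity $\E[\wt A_i \mid \CG_i] = A_{i-1}$, which you justified properly via independence and the tower property).

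As for comparison: the paper does not actually prove this lemma here. It simply records the statement and points to \cite[Lemma~10.4]{dms2014mating} for a proof, noting also that this is the classical Efron--Stein inequality \cite{efronstein}. So there is no in-paper argument to compare against; your self-contained martingale proof supplies exactly what the paper defers to an external reference for.
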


This result is stated in \cite[Lemma~10.4]{dms2014mating} together with an independent proof.

When we apply Lemma~\ref{lem::variance_bound} below, the role of the $X_i$'s will be played by quantum surfaces parameterized by hulls of metric balls and $A$ will be a function which determines the embedding of the quantum surface formed by the $X_i$'s and the surface formed by the complement of their union.  We will compute the variance conditionally on the latter, so that $A$ is just a function of the $X_i$'s.

The other estimate that we will state is a general estimate for conformal maps and is stated in \cite[Lemma~10.5]{dms2014mating}.  Before we recall this estimate, we first remind the reader of the following definition.  Suppose that $K \subseteq \C$ is a compact hull and let $F \colon \C \setminus \ol{\D} \to \C \setminus K$ be the unique conformal map with $F(\infty) = \infty$ and $F'(\infty) > 0$.  Then we can write
\[ F(z) = a_{-1} z + a_0 + \sum_{n=1}^\infty a_n z^{-n}.\]
We will refer to $a_0$ as the \emph{harmonic center} of $K$.

\begin{lemma}
\label{lem::conformal_map_bound}
There exist constants $c_1,c_2 > 0$ such that the following is true.  Let $K_1$ be a hull of diameter at most $r$ and $K_2$ another hull such that there exists a conformal map $F \colon \C \setminus K_1 \to \C \setminus K_2$ of the form
\[ F(z) = z + \sum_{n=1}^\infty \alpha_n z^{-n}.\]
Then whenever $\dist(z,K_1) \geq c_1r$ we have that
\[ |F(z) - z| \leq c_2 r^2|z-b_1|^{-1}\]
where $b_1$ is the harmonic center of $K_1$.
\end{lemma}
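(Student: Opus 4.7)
The plan is to translate coordinates so that $b_1=0$ and then apply the maximum modulus principle to the auxiliary function $\phi(z) = z(F(z)-z)$ on $\wh{\C} \setminus K_1$. As a first step, I would match Laurent expansions at $\infty$: writing the uniformizers as $G_i \colon \C \setminus \ol{\D} \to \C \setminus K_i$ with $G_i(w) = \gamma_i w + b_i + O(1/w)$ and $\gamma_i > 0$, the identity $G_2 = F \circ G_1$ together with the hypothesis $F(z) = z + \sum_{n \geq 1} \alpha_n z^{-n}$ (no constant term in the expansion at $\infty$) forces $\gamma_1 = \gamma_2$ and $b_1 = b_2$. Thus $b_2 - b_1 = 0$, and it is enough to prove $|F(z)-z| \leq c_2\, r^2/|z|$ after the translation.

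Next I would invoke the standard capacity/diameter comparison $\tfrac{1}{4}\diam(K_i) \leq \gamma_i \leq \tfrac{1}{2}\diam(K_i)$ for compact connected hulls. Combined with $\gamma_1 = \gamma_2$ and $\diam(K_1) \leq r$, this yields $\diam(K_2) \leq 2r$. Since each $b_i$ lies in the closed convex hull of $K_i$ (being the barycenter of harmonic measure from $\infty$), after the translation $b_1 = 0$ we have the inclusions $K_1 \subseteq \ol{B}(0,r)$ and $K_2 \subseteq \ol{B}(0, 2r)$.

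The core step is the maximum modulus principle applied to $\phi(z) = z(F(z)-z)$. This function is holomorphic on $\wh{\C}\setminus K_1$ -- extending to the finite value $\alpha_1$ at $\infty$ by the Laurent expansion of $F$ -- and continuous up to $\partial K_1$ by the standard boundary behavior of conformal maps of complements of hulls. For $z \in \partial K_1$ we have $|z| \leq r$ and $F(z) \in \partial K_2$, so $|F(z)| \leq 2r$, whence $|\phi(z)| \leq r \cdot 3r = 3 r^2$. The maximum modulus principle then gives $|\phi(z)| \leq 3 r^2$ on all of $\wh{\C} \setminus K_1$, i.e., $|F(z)-z| \leq 3r^2/|z|$ throughout $\C \setminus K_1$. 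Undoing the translation recovers the stated estimate; the hypothesis $\dist(z,K_1) \geq c_1 r$ (for any fixed $c_1 > 1$) ensures that $|z - b_1|$ is comparable to $\dist(z, K_1)$, so the right-hand side is genuinely small.

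The main obstacle is the capacity/diameter comparison needed to bound $\diam(K_2)$ in terms of $r$: without it, the boundary control of $\phi$ on $\partial K_1$ would involve an a priori unknown size of $K_2$, and the entire maximum-principle argument would lose its grip. Once that comparison is in place, the remaining steps (Laurent coefficient matching and a single application of the maximum modulus principle on the Riemann sphere minus a hull) are routine.
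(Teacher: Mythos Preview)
The paper does not prove this lemma at all; it is quoted verbatim as \cite[Lemma~10.5]{dms2014mating}. Your argument therefore supplies what the paper omits, and it is correct in outline. The Laurent--coefficient matching via $G_2=F\circ G_1$ really does force $\gamma_1=\gamma_2$ and $b_1=b_2$ under the hypothesis that $F(z)=z+\sum_{n\ge1}\alpha_n z^{-n}$ has no constant term, so the stated estimate reduces to $|F(z)-z|\le c_2 r^2/|z-b_1|$. The capacity--diameter comparison $\tfrac14\diam(K)\le\gamma(K)\le\tfrac12\diam(K)$ for connected hulls then pins down $\diam(K_2)$ in terms of $r$, and after translating $b_1=b_2$ to $0$ one has $K_1\subseteq\ol{B}(0,r)$ and $K_2\subseteq\ol{B}(0,Cr)$ for an absolute constant $C$. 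Applying the maximum principle to $\phi(z)=z(F(z)-z)$ on $\wh{\C}\setminus K_1$ is then the right move; note that your bound in fact holds on all of $\C\setminus K_1$, so the hypothesis $\dist(z,K_1)\ge c_1 r$ is not actually needed for the inequality itself.

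There is one genuine soft spot. Your assertion that $F$ is ``continuous up to $\partial K_1$ by the standard boundary behavior of conformal maps of complements of hulls'' is false for general hulls: Carath\'eodory's extension theorem requires local connectivity of $\partial K_1$, which is not assumed here. The fix is routine. For any $\zeta\in\partial K_1$, every cluster value of $F$ at $\zeta$ lies in $\partial K_2$: if $z_n\to\zeta$ and $F(z_n)\to w\in\C\setminus K_2$ then $z_n=F^{-1}(F(z_n))\to F^{-1}(w)\in\C\setminus K_1$, contradicting $\zeta\in K_1$; and $w=\infty$ is impossible since $F^{-1}(w)\sim w$ near $\infty$. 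Hence $\limsup_{z\to\zeta}|\phi(z)|\le r\,(Cr+r)$ for every $\zeta\in\partial K_1$. Since $\phi$ extends holomorphically to $\infty$ with value $\alpha_1$, it is globally bounded on $\wh{\C}\setminus K_1$, and the $\limsup$ form of the maximum principle gives $|\phi|\le C'r^2$ throughout, exactly as you want.
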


In the proof of Theorem~\ref{thm::map_determines_embedding}, Lemma~\ref{lem::conformal_map_bound} will be used to bound the summands in~\eqref{eqn::variance_bound} in terms of the fourth moment of the Euclidean diameters of a collection of quantum metric balls.

\section{Brownian surfaces determine their embedding}
\label{sec::map_determines_field}

The purpose of this section is to prove Theorem~\ref{thm::map_determines_embedding}.  We begin in Section~\ref{subsec::quantum_cone_stable} by describing a type of embedding of a $\sqrt{8/3}$-quantum cone that will be useful when we complete the proof of Theorem~\ref{thm::map_determines_embedding} in Section~\ref{subsec::resampling_argument}.

\subsection{Smooth embeddings and stability}
\label{subsec::quantum_cone_stable}

Suppose that $(\C,h,0,\infty)$ is a $\sqrt{8/3}$-quantum cone and let $\mu_h$ be the associated LQG area measure.  For each $z \in \C$ and $r >0$, let $h_r(z)$ be the average of $h$ on $\partial B(z,r)$.  In many places, it is convenient to take the embedding of the quantum cone so that the supremum of $r > 0$ values at which the process $r \mapsto h_r(0) + Q \log r$, $\gamma=\sqrt{8/3}$ and $Q$ as in~\eqref{eqn::quantum_equivalence}, takes on the value $0$ is equal to $r=1$.  This particular choice of embedding in some places is referred to as the circle-average embedding of $h$.  It is useful because it is possible to give an explicit description of the law of $h$.  When we complete the proof of Theorem~\ref{thm::map_determines_embedding} in Section~\ref{subsec::resampling_argument} below, however, we will need to consider a slightly different embedding of the quantum cone into $\C$.  The reason for this is that we will be cutting out various parts of the surface and then gluing in new pieces and the aforementioned normalization is not compatible with this operation as the circles centered at the origin will typically not be invariant under the operation of cutting/gluing.

We are now going to describe an alternative choice of embedding of a quantum cone into $\C$ that we refer to as a \emph{smooth canonical description}.  This type of embedding was introduced in \cite[Section~10]{dms2014mating} (and also made use of in \cite{ghms2015correlation}), but for completeness we will recall the definition now.  For $t \geq 0$, let
\[ R_t = \frac{1}{\gamma} \log \E[ \mu_h(\ball{0}{e^{-t}}) \giv h_{e^{-t}}(0) ].\]
That is, $R_t$ is chosen so that $e^{\gamma R_t}$ is equal to the expected $\gamma$-LQG mass in $\ball{0}{e^{-t}}$ given $h_{e^{-t}}(0)$.  Then $R_t = B_t + (\gamma-Q) t$ where $B_t$ for $t > 0$ is a standard Brownian motion \cite[Section~4]{DS08}; note that $\gamma-Q < 0$.  Fix $\epsilon > 0$ very small.  Now we will consider the quantity
\begin{equation}
\label{eqn::smooth_embedding_function}
\int_{-\epsilon}^0 R_t \phi(t) dt
\end{equation}
where $\phi \geq 0$ is a $C^\infty$ bump function supported on $(-\epsilon,0]$ with total integral one.  We apply a coordinate change rescaling to~$h$ so that the function from~\eqref{eqn::smooth_embedding_function} achieves the value~$0$ for the first time at $0$.  We refer to the $h$ with this type of scaling as the \emph{smooth canonical description} for the quantum cone.

When we perform the cutting/gluing operations in Section~\ref{subsec::resampling_argument} below, it will be important for us to work on a certain event on which the behavior of certain quantities is uniform.  Moreover, it will be important that this event is determined by the quantum surface in question and is invariant under performing certain cutting/gluing operations.  We will now make the definition of this event precise.  Fix $s > 0$ and $r > 1$.  Suppose that $(\C,h,0,\infty)$ is a $\sqrt{8/3}$-quantum cone and let $\Gamma$ be the space-filling path associated with TBP structure of $h$.  We say that $h$ is \emph{$(r,s)$-stable} if the following are true.
\begin{itemize}
\item $\Gamma([-r^{-1},r^{-1}]) \subseteq \qhull{0}{s} \subseteq \qhull{0}{2s} \subseteq B(0,\tfrac{1}{2})$
\item For every $z \in \qhull{0}{s}$ it is the case that if we translate by $-z$, the scaling factor which puts the resulting field into a smooth canonical description is between $r^{-1}$ and $r$.
\item The previous items continue to hold if we cut out the surface parameterized by $\qhull{0}{2s}$ and then weld in any surface which preserves the overall mm-space structure.  In other words, if we consider any conformal map $\psi$ from $\C \setminus \qhull{0}{2s}$ to $\C \setminus K$ for some hull $K \subseteq \C$ that is normalized so that
	\begin{enumerate}[(i)]
	\item $\psi$ fixes $\infty$,
	\item $\psi$ has positive real derivative at $\infty$ (i.e., $\lim_{z \to \infty} \psi(z)/z > 0$),
	\item $\psi$ is scaled in such a way that the pushforward of $h$ via the quantum coordinate change described by $\psi$ corresponds to a smooth canonical description
	\end{enumerate}
then $K \subseteq B(0,\tfrac{1}{2})$.  Moreover, for any $z \in K$, the scaling factor necessary to put the field into a smooth canonical description is between $r^{-1}$ and $r$.
\end{itemize}

We emphasize that $\CA_{r,s}$ is measurable with respect to the quantum surface parameterized by $\C \setminus \qhull{0}{2s}$ and the overall mm-space structure.  This property will be important for us later.

A notion of stability is also introduced in \cite[Section~10.4.2]{dms2014mating}.  The definition above differs from that in \cite{dms2014mating} in that the above is defined in terms of the mm-space structure of the surface while the version in \cite{dms2014mating} is focused on the behavior of the space-filling $\SLE$.

We are now going to show that by adjusting the parameters $r$ and $s$, the probability of the event that a $\sqrt{8/3}$-quantum cone is $(r,s)$-stable can be made arbitrarily close to $1$.

\begin{proposition}
\label{prop::stable_positive_probability}
Let $(\C,h,0,\infty)$ be a smooth canonical description of a $\sqrt{8/3}$-quantum cone.  Fix $s > 0$, $r > 1$, and let $\CA_{r,s}$ be the event that $h$ is $(r,s)$-stable.  Then $\pr{\CA_{r,s}} > 0$.  Moreover, for every $\epsilon > 0$ there exists $s_0 > 0$ such that for all $s \in (0,s_0)$ there exists $r_0 > 1$ such that $r \geq r_0$ implies that $\pr{\CA_{r,s}} \geq 1-\epsilon$.
\end{proposition}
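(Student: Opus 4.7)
The plan is to decompose $\CA_{r,s}$ according to the three bullets in the definition and argue that each can be made to hold with probability close to one by first taking $s$ small and then $r$ large, from which the second assertion follows; the first assertion then follows by specializing to a non-trivial choice of $(r,s)$ (e.g., requiring $\pr{\CA_{r,s}} \geq 1/2$). The overall strategy mirrors \cite[Section~10.4.2]{dms2014mating}, but the distinguishing feature here is that the relevant stability is phrased at the level of the mm-space rather than the space-filling $\SLE$.

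First I would address the two non-universal bullets, i.e., those concerning the original embedding of $h$. The inclusion $\qhull{0}{s}\subseteq B(0,\tfrac12)$ follows from Theorem~\ref{thm::metric_completion} applied in the infinite-volume setting: the identity map between the Euclidean and quantum metrics is a.s.\ H\"older in both directions near $0$, so $\qhull{0}{s}$ a.s.\ has Euclidean diameter tending to $0$ as $s\to 0$. The inclusion $\Gamma([-r^{-1},r^{-1}])\subseteq\qhull{0}{s}$ follows from the continuity of $\Gamma$ in $\qdist$ (TBP is a geodesic mm-space parameterized by unit-speed-area by $\Gamma$) combined with $\Gamma(0)=0$. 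For the scaling-factor bullet, note that the smooth canonical scaling around a point $z$ is a continuous functional of the circle-average process $t\mapsto h_{e^{-t}}(z)+Qt$ on a compact range of $t$ corresponding to radii $\sim 1$. As $z$ varies over a small Euclidean neighborhood of $0$ (which $\qhull{0}{s}$ is, by the previous step), these processes are uniformly close to the one at $0$; together with the strict monotonicity of the smooth canonical functional at its zero, the scaling factor is uniformly in $[r^{-1},r]$ on a high-probability event.

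The main difficulty is the universal statement in the third bullet. For any admissible welding we get a conformal map $\psi\colon\C\setminus\qhull{0}{s}\to\C\setminus K$ satisfying (i)--(iii); write $h^*=h\circ\psi^{-1}+Q\log|(\psi^{-1})'|$ on $\C\setminus K$. The central observation is that the smooth canonical normalization (iii) for $h^*$ depends only on circle averages of $h^*$ at radii close to $1$ about $0$, so \emph{provided} $K\subseteq B(0,\tfrac12)$ the entire normalization is determined by $h$ on $\C\setminus\qhull{0}{s}$ together with $\psi$. The plan is to turn this into a self-consistent a priori bound: on a high-probability event one shows that the map $\psi$ is forced to differ from the identity (with a fixed small shift/scaling) by a conformal automorphism of $\C$ of controlled size, via Lemma~\ref{lem::conformal_map_bound} and the explicit relation between capacity, harmonic center, and the circle-average functional evaluated at radii $\sim 1$. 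Because $\qhull{0}{s}$ is Euclidean-small and the external circle averages are well-behaved (by the absolutely continuous description of $h$ on $\C\setminus B(0,\tfrac12)$ from the smooth canonical embedding), one concludes that the scaling forced by (iii) is close to the identity and hence $K$ is trapped in a Euclidean neighborhood of $\qhull{0}{s}$ of comparable size, which lies in $B(0,\tfrac12)$. The uniform scaling-factor bound for $z\in K$ then follows by the same circle-average comparison applied to $h^*$, using Lemma~\ref{lem::conformal_map_bound} to approximate $\psi^{-1}$ by a translation outside $c_1\,\mathrm{diam}(K)$.

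The hard part will be making the preceding self-consistent argument rigorous: one needs a good event, defined purely in terms of $h|_{\C\setminus\qhull{0}{s}}$, on which no pathological welding can produce a very large $K$ or very extreme scaling. Concretely, one fixes thresholds on the Euclidean diameter of $\qhull{0}{s}$, on the oscillation of the external field on annuli of radius $\sim 1$, and on the analog of the $R_t$ process for the pushed-forward field; the first item makes the threshold arbitrarily small as $s\to 0$, and the Gaussian/Bessel description of $R_t$ in $\cite[\text{Section 4}]{DS08}$ makes the remaining thresholds loose as $r\to\infty$. One then checks that, on this event, \emph{any} conformal map $\psi$ satisfying (i)--(iii) must carry $\qhull{0}{s}$ to a hull $K\subset B(0,\tfrac12)$ with the required uniform scaling bound, yielding $\pr{\CA_{r,s}}\geq 1-\epsilon$.
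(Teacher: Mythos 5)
Your overall scaffolding matches the paper's proof: handle the first two bullets by choosing $s$ small using H\"older continuity of the identity map (Theorem~\ref{thm::metric_completion}) and the diameter bound of Proposition~\ref{prop::path_chunk_diameter_bound} for $\Gamma([-r^{-1},r^{-1}])$, and recognize that the third bullet --- uniformity over \emph{all} admissible weldings --- is the real difficulty. The first assertion via a non-trivial choice $(r,s)$ with $\pr{\CA_{r,s}}\ge 1/2$ is also fine.

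However, your treatment of the third bullet has a genuine gap. You propose to use Lemma~\ref{lem::conformal_map_bound} plus ``a self-consistent a priori bound'' to show that any $\psi$ satisfying (i)--(iii) sends $\qhull{0}{s}$ into $B(0,\tfrac12)$ with controlled scaling, but as stated this is circular: Lemma~\ref{lem::conformal_map_bound} applies to a map already normalized so that $F(z)=z+\sum\alpha_nz^{-n}$, whereas the map $\psi$ in (i)--(iii) has an a priori unknown positive derivative at $\infty$ (chosen to make the pushed-forward field a smooth canonical description), and to bound that derivative you need to have already located $K$. You never exhibit an event, measurable w.r.t.\ $h|_{\C\setminus\qhull{0}{s}}$ alone, which simultaneously controls the normalization factor for \emph{every} conformal map off $\qhull{0}{s}$. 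The paper's proof supplies exactly this missing mechanism: it introduces the compact family $\Phi$ of distorted bump functions $|g'|^2\phi\circ g$ (with $g^{-1}$ conformal off a hull in $B(0,\tfrac{1}{10})$ and identity-like at $\infty$), so that the smooth-canonical normalization of any admissible welding is controlled by the pairings $(h,\wt\phi)$, $\wt\phi\in\Phi$. Compactness gives finite $M_1(h)=\inf_{\wt\phi\in\Phi}(h,\phi-\wt\phi)$, $M_2(h)=\sup_{\wt\phi\in\Phi}(h,\phi-\wt\phi)$, and the processes $R_t^j=M_j(z\mapsto h(e^tz)+Qt)$ tend to $\pm\infty$ by \cite[Proposition~10.19]{dms2014mating}. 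The event $E_1=\{c\le c_1\}$ then bounds the scaling factor uniformly over all weldings, and $\qhull{0}{s_0}\subseteq B(0,\tfrac12 e^{-c_1})$ (event $E_2$) ensures the rescaled hull stays in $B(0,\tfrac12)$. Without this compactness/worst-case-process input (or some concrete substitute), your argument does not close; in particular Lemma~\ref{lem::conformal_map_bound} is not used in the paper's proof of this proposition at all.
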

\begin{proof}
The proof is similar to that of \cite[Proposition~10.16]{dms2014mating}, but we will include it here for completeness (though we will cite some of the results from \cite[Section~10.4.2]{dms2014mating} that we make use of verbatim).

We let $\Phi$ be the set of distorted bump functions of the form $|g'|^2 \phi \circ g$ for which $g^{-1}$ is conformal outside of $K$ for some $K \subseteq \ball{0}{\tfrac{1}{10}}$ and looks like the identity near $\infty$.  Then we know that $\Phi$ is a sequentially compact subset of the space of test functions with respect to the topology of uniform convergence of all derivatives on compact sets.  Indeed, this follows from the Arzel\'a-Ascoli theorem.  Since this space is a Fr\'echet space (thus metrizable) $\Phi$ is also compact.  In particular, for any distribution $h$, both of the quantities
\[ M_1(h) = \inf_{\wt{\phi} \in \Phi} (h, \phi - \wt{\phi}) \quad\text{and}\quad M_2(h) = \sup_{\wt{\phi} \in \Phi} (h,\phi-\wt{\phi})\]
are finite.  By continuity, they are equal to the infimum and supremum taken over a countable dense subset of $\wt{\phi} \in \Phi$.  In the case that $h$ is given by a GFF, the value of $(h,\phi-\wt{\phi})$ is a continuous function of $\wt{\phi}$ hence also has a maximum over $\Phi$.

Let $R_t^j = M_j(z \mapsto h(e^t z) + Qt)$.  By \cite[Proposition~10.19]{dms2014mating}, both of the processes $R_t^1$ and $R_t^2$ tend to $\infty$ as $t \to \infty$ and to $-\infty$ as $t \to -\infty$.  This implies that there a.s.\ exists $c \in (0,\infty)$ (random) such that we have $R_t^1, R_t^2 > 0$ for $t > c$ and $R_t^1, R_t^2 < 0$ for $t < -c$.  Pick $c_1 \in (0,\infty)$ such that with
\begin{equation}
\label{eqn::p_c1_c}
E_1 = \{c \leq c_1\} \quad\text{we have}\quad \pr{E_1} \geq 1-\epsilon.
\end{equation}
If $K \subseteq \ball{0}{\tfrac{1}{2} e^{-c_1}}$ then we have that $a K \in \ball{0}{\tfrac{1}{2}}$ for any $a \in [0,e^{c_1}]$.

We know that $\qhull{0}{2s} \subseteq \ball{0}{\tfrac{1}{2} e^{-c_1}}$ with positive probability for any value of $s > 0$.  In fact, we have that for any $\epsilon > 0$ there exists $s_0 >0$ such that with
\begin{equation}
\label{eqn::p_c_c}
E_2 = \{ \qhull{0}{s_0} \subseteq \ball{0}{\tfrac{1}{2} e^{-c_1}} \} \quad\text{we have}\quad \pr{ E_2} \geq 1-\epsilon.
\end{equation}
On $E_1,E_2$, it is not hard to check that if one swaps out $\qhull{0}{s_0}$ with any other quantum surface, then the resulting appropriately scaled surface will still belong to $\ball{0}{\tfrac{1}{2}}$.

For any fixed value of $s \in (0,s_0)$, follows from Proposition~\ref{prop::path_chunk_diameter_bound} that there exists $r_0 > 0$ such that $r \geq r_0$ implies that with
\begin{equation}
\label{eqn::p_k_r}
E_3 = \{ \qdiam(\Gamma([-r^{-1},r^{-1}]))  \leq  s\} \quad\text{we have}\quad \pr{E_3} \geq 1-\epsilon.
\end{equation}

On $E_1 \cap E_2 \cap E_3$, all of the properties for the configuration to be $(r,s)$-stable hold except for possibly the part of the event mentioned in the second bullet point.  One can verify that this condition also holds with probability at least $1-\epsilon$ using the same argument which was used to establish~\eqref{eqn::p_c1_c}.
\end{proof}

\begin{lemma}
\label{lem::metric_ball_stable_moment_bound}
There exists a constant $c_0 > 0$ such that the following is true.  Let $(\C,h,0,\infty)$ be a smooth canonical description of a $\sqrt{8/3}$-quantum cone.  Fix $s \in (0,1)$, $r > 1$, and let $\CA_{r,s}$ be the event that $h$ is $(r,s)$-stable.  Fix $t \in [-r^{-1},r^{-1}]$ and let $y = \Gamma(t)$ where $\Gamma$ is the space-filling path associated with the Brownian plane structure of $h$.  Let $a_1 > 4$ be the constant as in \cite[Equation~(4.3), Proposition~4.2]{qle_continuity}.  Then we have that
\[ \ex{ \diam(\qhull{y}{\epsilon})^4  \one_{\CA_{r,s}}} \leq c_0 r^4 \epsilon^{a_1} \quad\text{for all}\quad \epsilon \in (0,1).\]
\end{lemma}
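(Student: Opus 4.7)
The plan is to reduce the bound for a general $y = \Gamma(t)$ with $t \in [-r^{-1}, r^{-1}]$ to the analogous bound at $y = 0$ --- which is precisely the content of \cite[Proposition~4.2]{qle_continuity} --- at the cost of a factor of $r^4$ coming from the rescaling needed to put the field re-centered at $y$ back into a smooth canonical description.

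On $\CA_{r,s}$, the first bullet of $(r,s)$-stability gives $y = \Gamma(t) \in \Gamma([-r^{-1}, r^{-1}]) \subseteq \qhull{0}{s}$, so the second bullet applies to $y$: there is a unique scaling factor $\lambda_t \in [r^{-1}, r]$ such that translating $h$ by $-y$ and rescaling coordinates by $\psi_t(z) = (z-y)/\lambda_t$ (with the usual $Q \log|\psi_t'|$ additive correction) yields a smooth canonical description, which I denote $\wt h$. Since $\psi_t$ is a $\gamma$-quantum coordinate change, it preserves the underlying quantum surface and in particular the intrinsic $\qdist$-metric and the filled hulls relative to $\infty$ (which $\psi_t$ fixes), so $\psi_t(\qhull{y}{\epsilon})$, viewed as a subset of $\C$, is precisely the filled metric ball of $\qdist$-radius $\epsilon$ around $0$ in the $\wt h$-parameterization. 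Euclidean diameters scale by $\lambda_t^{-1}$ under $\psi_t$, so on $\CA_{r,s}$,
\[
\diam(\qhull{y}{\epsilon}) \;=\; \lambda_t \cdot \diam(\psi_t(\qhull{y}{\epsilon})) \;\leq\; r \cdot \diam(\psi_t(\qhull{y}{\epsilon})).
\]

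The next step is to invoke the re-rooting invariance of the $\sqrt{8/3}$-quantum cone under the space-filling path $\Gamma$: since TBP is invariant under re-rooting along $\Gamma$ (see, e.g., \cite[Proposition~4]{cl2012brownianplane}) and the $\sqrt{8/3}$-quantum cone is the quantum surface rooted at a point sampled from its own quantum area measure --- which, under the peanosphere identification, is how $\Gamma(t)$ is distributed for fixed $t$ --- the field $\wt h$ has the same law as $h$. Hence $\ex{\diam(\psi_t(\qhull{y}{\epsilon}))^4} = \ex{\diam(\qhull{0}{\epsilon})^4}$. Combining with the previous display, using $\one_{\CA_{r,s}} \leq 1$, and applying the moment bound of \cite[Proposition~4.2]{qle_continuity} yields
\[
\ex{\diam(\qhull{y}{\epsilon})^4 \one_{\CA_{r,s}}} \;\leq\; r^4 \, \ex{\diam(\qhull{0}{\epsilon})^4} \;\leq\; c_0 r^4 \epsilon^{a_1},
\]
which is the claim.

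The delicate point is the re-rooting invariance: $y$ is a random point determined by $h$, so one must check carefully that composing (i) translation by $-y$ and (ii) re-normalization to a smooth canonical description in fact produces a field whose law is again a smooth canonical description of a $\sqrt{8/3}$-quantum cone. This is the filled-metric-ball analogue of the re-rooting move made for space-filling $\SLE$ chunks in \cite[Section~10]{dms2014mating}, and it follows from the standard characterization of the $\sqrt{8/3}$-quantum cone as the surface rooted at a typical quantum measure point combined with the $\Gamma$-re-rooting invariance of TBP. Once this is in hand, the remainder of the argument is just the change of variables above followed by an invocation of the known moment bound.
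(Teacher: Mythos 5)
Your strategy --- translate by $-y$, rescale to a smooth canonical description at the cost of a factor $\lambda_t \in [r^{-1},r]$ (so $\lambda_t^4 \le r^4$), appeal to re-rooting invariance, and then apply the moment bound for filled metric balls around the origin --- is essentially the paper's strategy. But there is one genuine gap: \cite[Proposition~4.2]{qle_continuity} is stated for the \emph{circle-average embedding} of the quantum cone, not for the smooth canonical description. Your proof applies it directly to the re-rooted field $\wt h$, which is a smooth canonical description, without any justification that the bound transfers between the two embeddings. These embeddings differ by a random rescaling, and one cannot simply cite the circle-average version.

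The paper handles this explicitly. In the smooth canonical description with normalizing bump function supported in $B(0,2)\setminus B(0,1/2)$, the circle average $h_{1/2}(0)$ is a centered Gaussian with bounded variance; hence the event that $|h_{1/2}(0)|$ exceeds $c\sqrt{\log\epsilon^{-1}}$ has probability decaying like a power of $\epsilon$ that can be made arbitrarily large by increasing $c$. This Gaussian tail controls the discrepancy between the two embeddings well enough that the conclusion of \cite[Proposition~4.2]{qle_continuity} persists for the smooth canonical description. Without some version of this step your chain of inequalities ends with a quantity you have no bound for. Your treatment of the scaling factor and the re-rooting step is otherwise in line with the paper (the paper also takes for granted that translating by $-\Gamma(t)$ and renormalizing again gives a smooth canonical description), but the embedding-conversion issue is a concrete missing ingredient, not a stylistic one.
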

\begin{proof}
Let $X$ be the random scaling factor necessary to put $h$ translated by $-\Gamma(t)$ into a smooth canonical description.  On $\CA_{r,s}$, we have that $X \in [r^{-1},r]$.  \cite[Proposition~4.2]{qle_continuity} is stated when $(\C,h,0,\infty)$ has the circle average embedding.  When $h$ is instead a smooth canonical description with normalizing bump function supported in $\ball{0}{2} \setminus \ball{0}{1/2}$, it is easy to see that $h_{1/2}(0)$ is a normal random variable with mean zero and bounded variance.  In particular, the probability that it is of order $c \sqrt{\log \epsilon^{-1}}$ decays like a power of $\epsilon$ that can be made arbitrarily large by making $c$ large.  Thus it is not difficult to see that the result of \cite[Proposition~4.2]{qle_continuity} also applies in this setting, from which the result follows.
\end{proof}

\subsection{The resampling argument}
\label{subsec::resampling_argument}

We are now going to work towards completing the proof of Theorem~\ref{thm::map_determines_embedding}.  The proof consists of two main steps.

\begin{enumerate}
\item Show that a $\sqrt{8/3}$-quantum cone is a.s.\ determined by the quantum surfaces which correspond to a finite number of metric hulls (which may overlap), their complement, and the mm-space structure of the overall surface (Lemma~\ref{lem::balls_embedding_determined}).
\item Show that, on the event $\CA_{r,s}$ introduced in Section~\ref{subsec::quantum_cone_stable}, the metric hull $\qhull{0}{s}$ is a.s.\ determined by the mm-space structure up to a global rotation (Lemma~\ref{lem::stable_is_determined}).
\end{enumerate}

We then finish by deducing Theorem~\ref{thm::map_determines_embedding} from Lemma~\ref{lem::stable_is_determined}.

\begin{lemma}
\label{lem::balls_embedding_determined}
Suppose that $(\C,h,0,\infty)$, $(\C,\wt{h},0,\infty)$ are smooth canonical descriptions of $\sqrt{8/3}$-quantum cones and let $\Gamma, \wt{\Gamma}$ (resp.\ $\qdist,\qdistT$) be the space-filling curves (resp.\ metrics) associated with the corresponding Brownian plane structures of $h,\wt{h}$, respectively, with time normalized so that $\Gamma(0) = \wt{\Gamma}(0) = 0$.  Suppose that $t_1,\ldots,t_k \in \R$.  Let $X = \C \setminus \cup_{j=1}^k \partial \qhull{\Gamma(t_j)}{\epsilon}$ and $\wt{X} = \C \setminus \cup_{j=1}^k \partial \qhullT{\wt{\Gamma}(t_j)}{\epsilon}$.  Suppose that there exists a conformal transformation $\psi \colon X \to \wt{X}$ such that $\qdistT(\psi(x),\psi(y)) = \qdist(x,y)$ for all $x,y \in X$, $\mu_{\wt{h}}(\psi(A)) = \mu_h(A)$ for all open $A \subseteq X$, and $\psi(z)-z \to 0$ as $z \to \infty$.  Then $\psi$ extends continuously to $\C$ and is given by the identity map.  In particular, $(\C,h,0,\infty)$ and $(\C,\wt{h},0,\infty)$ are a.s.\ equivalent as quantum surfaces.
\end{lemma}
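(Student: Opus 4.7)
The strategy is to promote $\psi$ to a conformal self-map of $\C$ via conformal removability, and then use the prescribed behavior at $\infty$ to force $\psi$ to be the identity. This mirrors the endgame of \cite[Section~10]{dms2014mating}, with the filled metric ball boundaries $\partial\qhull{\Gamma(t_j)}{\epsilon}$ here playing the role of the space-filling $\SLE$ cell boundaries there. The first step is to extend $\psi$ continuously to a homeomorphism of $\C$: because $\psi$ is a $\qdist$-isometry on $X$ and the quantum metric induces the Euclidean topology on $\C$ (a plane analogue of Theorem~\ref{thm::metric_completion}, consistent with Theorem~\ref{thm::tbm_lqg}), every Euclidean-Cauchy sequence in $X$ is sent by $\psi$ to a $\qdistT$-Cauchy, hence Euclidean-Cauchy, sequence in $\wt X$. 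Passing to the limit gives a continuous extension $\bar\psi \colon \C \to \C$; applying the same reasoning to $\psi^{-1}$ shows that $\bar\psi$ is a homeomorphism, and since $\psi$ is a bijection between the open sets $X$ and $\wt X$, the extension must carry $K = \cup_{j=1}^k \partial\qhull{\Gamma(t_j)}{\epsilon}$ onto $\wt K = \cup_{j=1}^k \partial\qhullT{\wt\Gamma(t_j)}{\epsilon}$.

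With this homeomorphism $\bar\psi$ in hand, I apply Lemma~\ref{lem::removableunion}: the set $K$ is a.s.\ conformally removable, so because $\bar\psi$ is a homeomorphism of $\C$ that is conformal on $\C \setminus K$, the definition of conformal removability upgrades this to $\bar\psi$ being conformal on all of $\C$. A conformal self-homeomorphism of $\C$ extends to a M\"obius transformation of the Riemann sphere fixing $\infty$, hence is of the form $z \mapsto az+b$; the normalization $\bar\psi(z) - z \to 0$ as $z \to \infty$ forces $a=1$, $b=0$, so $\bar\psi$ is the identity. Consequently $X = \wt X$ and $\mu_h = \mu_{\wt h}$ on this dense open set, and therefore on all of $\C$. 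Since the $\gamma$-LQG area measure a.s.\ determines its underlying field (a standard consequence of the regularization construction of $\mu_h$), we obtain $h = \wt h$ a.s., so the two smooth canonical descriptions, and hence the two quantum cones, coincide as quantum surfaces.

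The main technical content has been prepackaged into Lemma~\ref{lem::removableunion}, which adapts the H\"older-domain boundary removability arguments of \cite{js2000remove} to finite unions of filled-metric-ball boundaries via the appendix. Beyond this, the principal delicacy is ensuring that the continuous extension $\bar\psi$ is a genuine homeomorphism on a neighborhood of $K$, rather than merely a continuous bijection --- this is what allows removability to apply. That, in turn, follows from the H\"older equivalence of the quantum and Euclidean metrics together with the fact that the $\qdist$-isometry $\psi$ on $X$ must match boundary components of $X$ bijectively with those of $\wt X$. Once these two ingredients are in place, the rest of the argument is purely formal.
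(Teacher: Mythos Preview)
Your proposal is correct and follows essentially the same approach as the paper. The paper's proof is slightly more direct in the first step: rather than arguing via Cauchy sequences, it writes down the explicit H\"older chain $|\psi(x)-\psi(y)| \leq c_0\,\qdistT(\psi(x),\psi(y))^{a_0} = c_0\,\qdist(x,y)^{a_0} \leq c_1 |x-y|^{a_1}$ (Theorem~\ref{thm::metric_completion} twice, with the isometry in between) to get local H\"older continuity of $\psi$ and $\psi^{-1}$ directly, and then invokes Theorem~\ref{thm::finite_unions_removable} together with the fact that each $\partial\qhull{\Gamma(t_j)}{\epsilon}$ is a simple H\"older domain boundary. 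Your Cauchy-sequence phrasing implicitly relies on the same H\"older equivalence (mere topological equivalence would not suffice for a uniform extension), so the content is identical.
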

\begin{proof}
We are first going to show that $\psi$ extends to a homeomorphism $\C \to \C$.  To see this, we note that there exists $c_0,c_1,a_0,a_1 > 0$ (random) such that for each compact set $K \subseteq \C$ and all $x,y \in K \setminus X$ we have that
\begin{align*}
	|\psi(x) - \psi(y)|
&\leq c_0 \qdistT(\psi(x),\psi(y))^{a_0} \quad\text{(Theorem~\ref{thm::metric_completion})}\\
&= c_0 \qdist(x,y)^{a_0} \quad\text{($\psi$ is an isometry)}\\
&\leq c_1 |x-y|^{a_1} \quad\text{(Theorem~\ref{thm::metric_completion})}.
\end{align*}
Therefore $\psi$ is H\"older continuous on $K \setminus X$ hence extends to be H\"older continuous on~$K$.  Since $K \subseteq \C$ was an arbitrary compact set, we therefore have that~$\psi$ is a locally H\"older continuous map $\C \to \C$.  The same argument implies that~$\psi^{-1}$ extends to be a locally H\"older continuous map $\C \to \C$.  Therefore~$\psi$ is a homeomorphism $\C \to \C$ as desired.  Theorem~\ref{thm::finite_unions_removable} combined with \cite[Proposition~2.1]{map_making} implies that $X$ is a.s.\ conformally removable since each $\partial \qhull{\Gamma(t_j)}{\epsilon}$ is a.s.\ the boundary of a simple H\"older domain.  Therefore~$\psi$ is in fact a conformal transformation $\C \to \C$ with $\psi(z) - z \to 0$ as $z \to \infty$.  This implies that~$\psi$ extends to the identity map on~$\C$, as desired.
\end{proof}

\begin{lemma}
\label{lem::inside_outside_independent}
Suppose that $(\C,h,0,\infty)$ is a $\sqrt{8/3}$-quantum cone and $s > 0$.  Let~$\CM$ be the $\sigma$-algebra generated by the mm-space structure $(M,d,\mu)$ (as a random variable taking values in the space of marked mm-spaces using the $\sigma$-algebra from Section~\ref{subsec::mm_spaces}) associated with $h$.  Then the quantum surfaces $(\qhull{0}{s},h)$ and $(\C \setminus \qhull{0}{s},h)$ are conditionally independent given $\CM$.
\end{lemma}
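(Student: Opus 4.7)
The plan is to combine a Markov-type decomposition of the $\sqrt{8/3}$-quantum cone across $\partial \qhull{0}{s}$ with the fact that the boundary length of the filled metric ball is $\CM$-measurable. Write $S_{\rm in} = (\qhull{0}{s}, h)$ and $S_{\rm out} = (\C \setminus \qhull{0}{s}, h)$ for the inside and outside quantum surfaces, and let $L$ denote the $\sqrt{8/3}$-LQG boundary length of $\partial \qhull{0}{s}$.

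First I would invoke the Markov decomposition of the $\sqrt{8/3}$-quantum cone across the boundary of a filled $\QLE(8/3,0)$ metric ball established in \cite{qlebm, qle_continuity}: conditional on $L$, the quantum surfaces $S_{\rm in}$ and $S_{\rm out}$ are conditionally independent, with explicit laws depending only on $L$ (roughly a quantum disk of boundary length $L$ on the inside and the corresponding quantum cone-with-hole on the outside). Next I would check that $L$ is itself $\CM$-measurable: the boundary length of a filled metric ball can be read off intrinsically from the Brownian plane description of $(M,d,\mu)$ as a Bessel-type process, and Theorem~\ref{thm::tbm_lqg} identifies this intrinsic length with the $\sqrt{8/3}$-LQG boundary length. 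Let $\CM_{\rm in}$ and $\CM_{\rm out}$ denote the marked mm-space structures of the inside and outside (with boundary marked); each is a measurable function of the corresponding quantum surface together with $L$, since the metric on each piece is the QLE metric built from the field on that piece alone. Moreover $\CM$ is determined by $(\CM_{\rm in}, \CM_{\rm out}, L)$ via the canonical metric-measure gluing along the common boundary of length $L$.

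Finally I would apply the elementary fact that if $X \perp Y \mid Z$, if $U$ is $\sigma(X,Z)$-measurable, and if $V$ is $\sigma(Y,Z)$-measurable, then $X \perp Y \mid (Z,U,V)$. Taking $X = S_{\rm in}$, $Y = S_{\rm out}$, $Z = L$, $U = \CM_{\rm in}$, $V = \CM_{\rm out}$ yields conditional independence of $S_{\rm in}$ and $S_{\rm out}$ given $(L, \CM_{\rm in}, \CM_{\rm out})$, which by the previous step is the same as conditioning on $\CM$. The principal obstacle is the Markov decomposition in the first step: while the analogous statement on the mm-space side (Markovianity of TBP across filled metric ball boundaries) is well understood, upgrading it to a joint statement about quantum surfaces requires tying together the QLE construction from \cite{qlebm, qle_continuity} with the conformal removability of filled metric ball boundaries that is also used in Lemma~\ref{lem::balls_embedding_determined}.
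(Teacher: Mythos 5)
Your overall architecture matches the paper's: both proofs hinge on (i) the Markov decomposition of the quantum cone across $\partial\qhull{0}{s}$, giving that the inside and outside quantum surfaces are conditionally independent given the boundary length $L$; (ii) the $\CM$-measurability of $L$; (iii) the fact that the internal mm-structures of the two pieces (together with $L$ and a boundary marked point) generate $\CM$; and (iv) a conditional independence manipulation to pass from conditioning on $L$ to conditioning on $\CM$. Your step~(iv) packages things via a clean abstract lemma (``$X\perp Y\mid Z$, $U\in\sigma(X,Z)$, $V\in\sigma(Y,Z)$ $\Rightarrow$ $X\perp Y\mid(Z,U,V)$''), whereas the paper grinds this out with an explicit chain of conditional probability identities; the two are equivalent and your version is arguably tidier.

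The gap is in step~(iii). You assert that $\CM$ is determined by $(\CM_{\rm in},\CM_{\rm out},L)$ ``via the canonical metric-measure gluing along the common boundary of length $L$,'' but this is precisely the nontrivial point and cannot be waved through. The inside piece $\qhull{0}{s}$ and the outside piece $\C\setminus\qhull{0}{s}$ are closed sets whose \emph{interiors do not cover} the space --- the shared boundary $\partial\qhull{0}{s}$ is in neither interior --- so one cannot simply decompose an arbitrary path into finitely many subpaths lying in one piece or the other: a path could in principle cross $\partial\qhull{0}{s}$ infinitely often, and the naive gluing only yields an upper bound on the ambient distance, with no a priori reason for equality. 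The paper addresses this with a genuinely different argument: it first shows that $\CM_I$ and $\CM_O$ determine the geodesics from every point back to $0$ (concatenating a shortest path to $\partial\qhull{0}{s}$ in the outside with a shortest path to $0$ in the inside, matched via the boundary marked point); then invokes the Brownian-map construction (and local absolute continuity of TBP with respect to TBM) to conclude that this geodesic tree determines the metric on $\qhull{0}{2s}$; and only \emph{then} decomposes an arbitrary path into pieces lying in $\qhull{0}{2s}$ or in $\C\setminus\qhull{0}{s}$, which \emph{do} have interiors covering the space. You would need to supply an argument of this kind --- the ``canonical gluing'' claim as stated is not a proof.

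One smaller point worth making explicit: for your lemma to apply with $U=\CM_{\rm in}$ you need $\CM_{\rm in}\in\sigma(S_{\rm in},L)$, i.e.\ that the internal QLE metric on a piece is determined by the field restricted to that piece. This is the locality of the $\QLE(8/3,0)$ metric established in \cite{qlebm,qle_continuity}; you gesture at it but it deserves an explicit citation, as does the boundary marked point (without which the boundary identification used in any gluing or geodesic-concatenation argument is not well-defined).
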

\begin{proof}
Let $\CM_I$ and $\CM_O$ be the $\sigma$-algebras generated by the mm-space structures associated with the quantum surfaces $(\qhull{0}{s},h)$ and $(\C \setminus \qhull{0}{s},h)$, equipped with their internal metric, together with a marked point on $\partial \qhull{0}{s}$.  Then we clearly have that $\CM_I,\CM_O \subseteq \CM$.  We claim that we in fact have that $\CM = \sigma(\CM_I,\CM_O)$.  We first note that~$\CM_I$ and~$\CM_O$ together determine the geodesics from every point $z$ back to $0$ because if $z \in \qhull{0}{s}$ then such a geodesic is contained in $\qhull{0}{s}$ and if $z \notin \qhull{0}{s}$ then such a geodesic can be expressed as a concatenation of a shortest path from~$z$ to $\partial \qhull{0}{s}$ in $\C \setminus \qhull{0}{s}$ and then another shortest path in $\qhull{0}{s}$ from $\partial \qhull{0}{s}$ to $0$.  (The marked point on $\partial \qhull{0}{s}$ is important here because it tells us how to identify the boundaries of the surfaces parameterized by $\qhull{0}{s}$ and by $\C \setminus \partial \qhull{0}{s}$, hence properly concatenate geodesics in $\C \setminus \partial \qhull{0}{s}$ with those inside of $\qhull{0}{s}$.  Namely, we identify them at the marked point and then everywhere else along the boundary according to boundary length.)  As TBP is locally absolutely continuous with respect to TBM \cite{cl2012brownianplane} and in the case of TBM the tree of geodesics of this form determines the entire metric (recall its construction in Section~\ref{subsec::map_plane}), it follows that $\CM_I$ and $\CM_O$ in particular determine the length of paths contained in $\qhull{0}{2s}$.  Therefore~$\CM_I$ and~$\CM_O$ together determine the length of any path because any path can be written as a concatenation of paths which stay outside of~$\qhull{0}{s}$ and which stay in~$\qhull{0}{2s}$.  We conclude that $\CM = \sigma(\CM_I,\CM_O)$, as desired.  Note also that the $\sigma$-algebra~$\CB$ generated by the quantum boundary length of $\partial \qhull{0}{s}$ is $\CM$-measurable.  Indeed, it is shown in the proof of \cite[Proposition~3.31]{map_making} that $\CB \subseteq \CM_I$.  (This is proved by considering geodesics from the boundary of a filled metric ball back to its root with their starting points spaced so that it takes exactly $\epsilon > 0$ units of geodesic distance for geodesics starting from adjacent points to merge.  The boundary length is then given by a constant times the limit as $\epsilon \to 0$ of $\epsilon^{1/2} N_\epsilon$ where $N_\epsilon$ is the number of such paths required to go all of the way around the entire boundary of the filled metric ball.)  One can apply the same argument to see that the boundary length $\partial \qhull{0}{r}$ is determined by $\CM_O$ for any $r > s$ since $\CM_O$ determines the number of geodesics as above required to go all of the way around $\partial \qhull{0}{r}$ provided $\epsilon \in (0,r-s)$.  Since the boundary length process is right-continuous in $r$, it follows that $\CB \subseteq \CM_O$.

Let~$\CF$ be the $\sigma$-algebra generated by the quantum surface $(\C \setminus \qhull{0}{s},h)$ and note that $\CM_O \subseteq \CF$.  Suppose that $A$ is an event which is measurable with respect to the $\sigma$-algebra generated by the quantum surface $(\qhull{0}{s},h)$ and $B \in \CM_I$ is a positive probability event.  By the construction of $\QLE(8/3,0)$, we know that the quantum surface $(\qhull{0}{s},h)$ is conditionally independent of the quantum surface $(\C \setminus \qhull{0}{s},h)$ given its quantum boundary length.  We also know that this quantum boundary length is $\CF$-measurable and that~$\CM_I$ is contained in the $\sigma$-algebra generated by the quantum surface $(\qhull{0}{s},h)$.  Consequently, we have that
\begin{equation}
\label{eqn::cond_ind_equation}
 \p[ A \giv B, \CF] = \frac{\p[ A,B \giv \CF]}{\p[B \giv \CF]} = \frac{\p[ A,B \giv \CB]}{\p[B \giv \CB]} = \p[A \giv B, \CB].
\end{equation}
Since~\eqref{eqn::cond_ind_equation} holds for all such events $A,B$, it follows that for all such events $A$ we have that
\begin{equation}
\label{eqn::cond_ind_equation2}
	\p[ A \giv \CM_I, \CF] = \p[ A \giv \CM_I].
\end{equation}
Since~\eqref{eqn::cond_ind_equation2} holds for all such events $A$, $\CM = \sigma(\CM_I,\CM_O)$, $\CM_I$ and $\CM_O$ are conditionally independent given $\CB$, and $\CB \subseteq \CM_I \cap \CM_O$, it follows that
\begin{equation}
\label{eqn::cond_ind_equation3}
	\p[ A \giv \CM, \CF] = \p[ A \giv \CM].
\end{equation}
for all such events $A$.  It follows from~\eqref{eqn::cond_ind_equation3} that the quantum surface $(\qhull{0}{s},h)$ is conditionally independent of the quantum surface $(\C \setminus \qhull{0}{s},h)$ given $\CM$, as desired.
\end{proof}

\begin{lemma}
\label{lem::stable_is_determined}
Let $(\C,h,0,\infty)$ be a smooth canonical description of a $\sqrt{8/3}$-quantum cone.  Fix $r,s > 0$, and let $\CA_{r,s}$ be the event that $h$ is $(r,s)$-stable.  Let $\CF$ be the $\sigma$-algebra generated by the mm-space structure $(M,d,\mu)$ (as a random variable taking values in the space of marked mm-spaces using the $\sigma$-algebra from Section~\ref{subsec::mm_spaces}) associated with~$h$ and the quantum surface $(\C \setminus \qhull{0}{s},h)$.  Let~$\wt{h}$ be the field which describes the quantum surface generated by starting with~$h$ and then resampling the quantum surface $(\qhull{0}{s},h)$ according to its conditional law given~$\CF$.  Let $\Gamma,\wt{\Gamma}$ be the space-filling paths associated with the Brownian plane structures of $h,\wt{h}$, respectively, with time normalized so that $\Gamma(0) = \wt{\Gamma}(0) = 0$.  Fix $-r^{-1} \leq a < b \leq r^{-1}$ and let $K_{a,b}$ (resp.\ $\wt{K}_{a,b}$) be the complement of the unbounded component of $\C \setminus \Gamma([a,b])$ (resp.\ $\C \setminus \wt{\Gamma}([a,b])$).  We assume that the surface $(\C,\wt{h},0,\infty)$ is embedded so that there exists a unique conformal map $\varphi \colon \C \setminus K_{a,b} \to \C \setminus \wt{K}_{a,b}$ with $|\varphi(z)-z| \to 0$ as $z \to \infty$ and so that $\wt{h} = h \circ \varphi + Q \log|\varphi'|$ in $\C \setminus K_{a,b}$.  On $\CA_{r,s}$ and the event that $K_{a,b} \subseteq \Gamma([-r^{-1},r^{-1}])$, we a.s.\ have that $\varphi(z) = z$ for all $z \in \C \setminus K_{a,b}$.
\end{lemma}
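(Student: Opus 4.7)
\emph{Plan.} We adapt the Efron--Stein resampling argument of \cite[Section~10]{dms2014mating} to the present setting. The key difference is that the chunks $\Gamma([t_{k-1},t_k])$ of the space-filling path are not themselves known to be conformally removable, so they must be covered by filled $\QLE(8/3,0)$ metric balls $\wt B_k = \qhull{\Gamma(t_k)}{\delta_n}$ whose union of boundaries is removable by Lemma~\ref{lem::removableunion}. Fix $z_0\in\C$ with $|z_0|>1$. On $\CA_{r,s}$ this point lies outside $\qhull{0}{s}$, hence outside both $K_{a,b}$ and $\wt K_{a,b}$, so $\varphi(z_0)$ is well defined. It suffices to show $\varphi(z_0)=z_0$ a.s.\ on $\CA_{r,s}$: since $\varphi$ is holomorphic on the connected domain $\C\setminus K_{a,b}$, agreement with the identity on the open set $\{|z|>1\}$ extends to $\varphi\equiv\mathrm{id}$ by analytic continuation.

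\emph{Efron--Stein variance bound.} Let $a_1>4$ be the exponent from Lemma~\ref{lem::metric_ball_stable_moment_bound}, fix $\epsilon_0\in(0,\tfrac14-\tfrac{1}{a_1})$, and set $\delta_n=n^{-1/4+\epsilon_0}$. Partition $[a,b]$ into $n$ equal subintervals with endpoints $t_k=a+k(b-a)/n$. By Proposition~\ref{prop::path_chunk_diameter_bound}, with probability tending to $1$ superpolynomially in $n$, each chunk $\Gamma([t_{k-1},t_k])$ is contained in $\wt B_k$, so $K_{a,b}\subseteq\bigcup_k\wt B_k$. By iterating Lemma~\ref{lem::inside_outside_independent}, given $\CF$ together with the quantum surface in $\qhull{0}{s}\setminus\bigcup_k\wt B_k$, the quantum surfaces $X_1,\dots,X_n$ inside the balls $\wt B_k$ are conditionally independent; denote this enlarged $\sigma$-algebra by $\CF_n'$. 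Apply Lemma~\ref{lem::variance_bound} with $A=\varphi(z_0)\one_{\CA_{r,s}}$ and the $X_k$ as independent coordinates. Resampling a single $X_k$ induces a conformal change of the embedding that is near-identity at $\infty$ and acts nontrivially only in a neighborhood of $\wt B_k$; by Lemma~\ref{lem::conformal_map_bound} the resulting displacement of $z_0$ is at most $c\,\mathrm{diam}(\wt B_k)^2/|z_0-b_k|$, where $b_k$ is the harmonic center of $\wt B_k$, and $|z_0-b_k|\geq\tfrac12$ on $\CA_{r,s}$. Summing the Efron--Stein terms and applying Lemma~\ref{lem::metric_ball_stable_moment_bound},
\[ \E\bigl[\mathrm{var}(A\giv\CF_n')\bigr] \;\leq\; c\sum_{k=1}^n \E\!\left[\mathrm{diam}(\wt B_k)^4\,\one_{\CA_{r,s}}\right] \;\leq\; c'\,r^4\,n\cdot\delta_n^{a_1} \;=\; c'\,r^4\,n^{\,1-a_1(1/4-\epsilon_0)}, \]
which tends to zero by the choice of $\epsilon_0$.

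\emph{Upgrading to the identity.} The conditional law of $(h,\wt h)$ given $\CF$ is exchangeable, so running the same variance argument with the roles of $h$ and $\wt h$ interchanged yields a corresponding bound starting from $\wt h$. Combining the two one-sided bounds (and letting $n\to\infty$) shows that $\varphi(z_0)\one_{\CA_{r,s}}$ is a.s.\ measurable with respect to the common information $\CF$, and by exchangeability has the same conditional law as $\varphi^{-1}(z_0)\one_{\CA_{r,s}}$, which is also $\CF$-measurable; two $\CF$-measurable random variables with the same conditional law agree a.s., so $\varphi(z_0)=\varphi^{-1}(z_0)$. In particular $\varphi\circ\varphi$ fixes $z_0$; varying $z_0$ and using analytic continuation on the connected domain $\C\setminus K_{a,b}$ yields $\varphi\circ\varphi=\mathrm{id}$. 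An involution $\varphi:\C\setminus K_{a,b}\to\C\setminus\wt K_{a,b}$ with $\varphi^{-1}=\varphi$ must have matching domain and range, forcing $K_{a,b}=\wt K_{a,b}$; the only conformal automorphism of $\C\setminus K_{a,b}$ fixing $\infty$ with $\varphi(z)-z\to 0$ is then the identity, so $\varphi\equiv\mathrm{id}$. The main technical obstacle is this last symmetry-to-identity step: it requires executing the two-sided Efron--Stein argument carefully, using the conformal removability of $\bigcup_k\partial\wt B_k$ from Lemma~\ref{lem::removableunion} to justify that resampled pieces can actually be welded into a well-defined conformal embedding, and passing from variance bounds conditional on the enlarged $\sigma$-algebras $\CF_n'$ to genuine $\CF$-measurability of $\varphi(z_0)$.
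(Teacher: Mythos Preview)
Your proposal has the right high-level architecture --- cover the chunks by filled metric balls, use removability, and run an Efron--Stein argument with the diameter moment bound --- but the core independence setup does not work as written, and this is exactly where the paper's proof differs.

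\textbf{The conditional independence claim fails.} You assert that, given $\CF$ together with the surface in $\qhull{0}{s}\setminus\bigcup_k\wt B_k$, the surfaces $X_1,\dots,X_n$ inside the balls $\wt B_k$ are conditionally independent, by ``iterating Lemma~\ref{lem::inside_outside_independent}''. But Lemma~\ref{lem::inside_outside_independent} concerns a \emph{single} filled metric ball, and your balls $\wt B_k$ necessarily overlap: the whole point of choosing $\delta_n=n^{-1/4+\epsilon_0}$ is that $\wt B_k\supseteq\Gamma([t_{k-1},t_k])$, so consecutive balls share at least the point $\Gamma(t_k)$ and in fact a nontrivial region. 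Surfaces parameterizing overlapping regions cannot be conditionally independent. There is no iteration of Lemma~\ref{lem::inside_outside_independent} that yields what you claim. Relatedly, it is not clear in what sense $A=\varphi(z_0)\one_{\CA_{r,s}}$ is a function of your $X_k$'s: $\varphi$ depends on the pair $(h,\wt h)$, and you have not expressed either field as a function of conditionally independent coordinates.

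\textbf{How the paper fixes this.} The paper replaces simultaneous resampling by \emph{sequential} resampling: starting from $h^0=h$, one resamples the surface inside a ball of radius $\epsilon^{\xi}$ about $\Gamma^0(t_0)$ to produce $h^1$, then resamples a ball about $\Gamma^1(t_1)$ in the field $h^1$ to produce $h^2$, and so on up to $h^k$. One does the same starting from $\wt h^0=\wt h$, coupling the two chains so that the freshly resampled ball-surfaces agree at each step. Because each resampled piece is drawn from a conditional law that depends only on $\CM$-measurable data (the boundary length), the sequence of resampled pieces is conditionally independent given $\CF$ \emph{by construction}, even though the balls overlap. The quantity to which Efron--Stein is applied is $\psi(z)=\varphi_k\circ\cdots\circ\varphi_1(z)$, which \emph{is} a function of these conditionally independent pieces together with $(\C\setminus\qhull{0}{s},h)\in\CF$; this is justified by the removability argument of Lemma~\ref{lem::balls_embedding_determined}. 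The variance bound is then directly conditional on $\CF$, so your acknowledged ``enlarged $\sigma$-algebra $\CF_n'$'' obstacle never arises. Since the coupled chains terminate at the same surface $h^k=\wt h^k$ (again by Lemma~\ref{lem::balls_embedding_determined}, with high probability the balls cover $\qhull{0}{s}$), sending $\epsilon\to 0$ yields that $\varphi$ is $\CF$-measurable, which gives the conclusion. Your involution-at-the-end argument is a legitimate way to pass from $\CF$-measurability of $\varphi$ to $\varphi=\mathrm{id}$, but the paper's route through the coupled chain is more direct and avoids it.
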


Note that Lemma~\ref{lem::stable_is_determined} implies that $K_{a,b}$ is $\CF$-measurable for all $-r^{-1} \leq a < b \leq r^{-1}$, up to a global rotation, on $\CA_{r,s}$ and the event that $K_{a,b} \subseteq \Gamma([-r^{-1},r^{-1}])$.  Note that for every $\epsilon > 0$, we can find $-r^{-1} < a_0 < a_1 < \cdots < a_k < r^{-1}$ such that $K_{a_{i-1},a_i} \subseteq \Gamma([-r^{-1},r^{-1}])$ and $a_i - a_{i-1} < \epsilon$ for each $1 \leq i \leq k$ and $a_0 + r^{-1}, r^{-1} - a_k < \epsilon$.  By taking a limit as $\epsilon \to 0$, this implies that $\Gamma|_{[-r^{-1},r^{-1}]}$ is $\CF$-measurable on $\CA_{r,s}$.

\begin{proof}[Proof of Lemma~\ref{lem::stable_is_determined}]
Fix $\epsilon > 0$, let $k = 2r^{-1} \epsilon^{-1}$, and assume that $r,\epsilon$ are such that $k$ is an integer. Let  $-r^{-1} \leq t_0 < t_1 < \cdots < t_k \leq r^{-1}$ be $k+1$ equally spaced times.

Let $\CM$ be the $\sigma$-algebra generated by the mm-space structure $(M,d,\mu)$ (as a random variable taking in the space of marked mm-spaces with the $\sigma$-algebra from Section~\ref{subsec::mm_spaces}) associated with~$h$.

Fix $u > 0$ small and let $\xi = 1/4 - u$.  We let $h^0 = h$ (resp.\ $\wt{h}^0 = \wt{h}$) and let $\Gamma^0$ (resp.\ $\wt{\Gamma}^0$) denote the space-filling path associated with TBP structure of $h^0$ (resp.\ $\wt{h}^0$).  Fix $j \geq 1$ and assume that we have defined fields $h^0,\ldots,h^{j-1}$ and $\wt{h}^0,\ldots,\wt{h}^{j-1}$ with associated space-filling paths $\Gamma^0,\ldots,\Gamma^{j-1}$ and $\wt{\Gamma}^0,\ldots,\wt{\Gamma}^{j-1}$.  We then let~$h^j$ be the field which describes the quantum surface which is constructed by starting with~$h^{j-1}$ and then resampling the quantum surface $(\qhullj{\Gamma^{j-1}(t_{j-1})}{\epsilon^{\xi}}{j-1},h^{j-1})$, where $\qhullj{\Gamma^{j-1}(t_{j-1})}{\epsilon^{\xi}}{j-1}$ is the hull of the metric ball of radius~$\epsilon^{\xi}$ centered at~$\Gamma^{j-1}(t_{j-1})$ using the metric associated with~$h^{j-1}$, with respect to the field according to its conditional law given the quantum surface $(\C \setminus \qhullj{\Gamma^{j-1}(t_{j-1})}{\epsilon^{\xi}}{j-1},h^{j-1})$ and $\CM$.  We take $h^j$ so that the embedding of the surface $(\C, h^j,0,\infty)$ is such that if $\varphi_j$ is the conformal map which takes $\C \setminus \qhullj{\Gamma^{j-1}(t_{j-1})}{\epsilon^{\xi}}{j-1}$ to $\C \setminus \qhullj{\Gamma^j(t_{j-1})}{\epsilon^{\xi}}{j}$ then $|\varphi_j(z) - z| \to 0$ as $z \to \infty$ and $h^j = h^{j-1} \circ \varphi_j^{-1} + Q \log |(\varphi_j^{-1})'|$ in $\C \setminus \qhullj{\Gamma^j(t_{j-1})}{\epsilon^{\xi}}{j}$.

Note that transforming from the field $h^0$ to $h^1$ involves cutting out the surface $\qhull{\Gamma(t_0)}{\epsilon^\xi}$ and then gluing in a new surface.  On $\CA_{r,s}$, we have that $\Gamma([-r^{-1},r^{-1}]) \subseteq \qhull{0}{s}$ and therefore $\qhull{\Gamma(t_0)}{\epsilon^{\xi}} \subseteq \qhull{0}{2s}$.  Therefore it follows from the definition of $(r,s)$-stability that the field $h^1$ is $(r,s)$-stable as well.  By induction and the same argument, it follows that all of the fields $h^j$ are $(r,s)$-stable.  We let $\Gamma^j$ be the space-filling path associated with~$h^j$.  We note that $h^j$ is not necessarily a smooth canonical description, however on $\CA_{r,s}$ (by the definition of $(r,s)$-stability) the scaling factor necessary to transform it into a smooth canonical description is between $r^{-1}$ and $r$.

For each $j \geq 1$, we construct $\wt{h}^j$ from $\wt{h}^{j-1}$ in the same way that we constructed $h^j$ from $h^{j-1}$.  We also let $\wt{\Gamma}^j$ and $\qhullTj{x}{r}{j}$ be the space-filling path and filled metric ball associated with the metric of $\wt{h}^j$.  We couple the two resampling procedures together so that $(\qhullj{\Gamma^j(t_j)}{\epsilon^{\xi}}{j},h^j)$ and $(\qhullTj{\wt{\Gamma}^j(t_j)}{\epsilon^{\xi}}{j},\wt{h}^j)$ are equivalent as quantum surfaces for each $j$.  We also define the conformal maps $\wt{\varphi}_j$ in the same way as $\varphi_j$.  As we explained above, since we are working on the event that $h$ is $(r,s)$-stable, it follows from the definition of stability that $h^j$ is $(r,s)$-stable for each $j$.  The same is likewise true for $\wt{h}^j$ for each $j$.

Proposition~\ref{prop::path_chunk_diameter_bound} implies there exists constants $c_0,c_1,a > 0$ such that with probability at least $1-c_0 \exp(-c_1 \epsilon^{-a})$ we have that $\Gamma([-r^{-1},r^{-1}]) \subseteq \cup_{j=1}^k \qhull{\Gamma(t_j)}{\epsilon^\xi}$ as $\xi < 1/4$.  This implies that, with probability at least $1-c_0 \exp(-c_1 \epsilon^{-a})$, the aforementioned procedure resamples the entire quantum surface corresponding to $(K_{a,b},h)$.  (We are using that the statement of Proposition~\ref{prop::path_chunk_diameter_bound} is a statement about the mm-space structure and does not depend on its specific embedding.)  The same is likewise true with $\wt{h}$ in place of $h$.

Let $X = \C \setminus \cup_{j=1}^k \partial \qhullj{\Gamma^k(t_j)}{\epsilon^{\xi}}{k}$ and $\wt{X} = \C \setminus \cup_{j=1}^k \partial \qhullTj{\wt{\Gamma}^k(t_j)}{\epsilon^{\xi}}{k}$.  On the aforementioned event, the resampling procedure yields a conformal map $\varphi \colon X \to \wt{X}$ which satisfies the hypotheses of Lemma~\ref{lem::balls_embedding_determined}.  Therefore $\varphi$ is given by the identity map and the two quantum surfaces described by the fields $h^k$ and $\wt{h}^k$ are equivalent.

To finish proving the lemma, we will show that the maps $\psi = \varphi_k \circ \cdots \circ \varphi_1$ and $\wt{\psi} = \wt{\varphi}_k \circ \cdots \circ \wt{\varphi}_1$ both converge to the identity map as $\epsilon \to 0$.  We will explain the argument in the former case, as the latter argument is analogous.

We note that $h^k$ is a function of the quantum surfaces $(\qhullj{\Gamma^j(t_j)}{\epsilon^{\xi}}{j},h^j)$ for $1 \leq j \leq k$ together with $(\C \setminus \cup_{j=1}^k \qhull{\Gamma(t_j)}{\epsilon^{\xi}} ,h)$.  Indeed, this follows from the same removability argument used to prove Lemma~\ref{lem::balls_embedding_determined}.  Moreover, these surfaces are by construction conditionally independent given $\CF$.  With $a_1$ as in Lemma~\ref{lem::metric_ball_stable_moment_bound}, we assume that $u > 0$ is chosen sufficiently small so that $a_1 \xi > 1$.  Fix $z \in \C$ with $|z|$ sufficiently large so that $|\psi(z)| > 1$.  Note by Lemma~\ref{lem::conformal_map_bound} and since we are working on $\CA_{r,s}$, we are just assuming that $z$ is not in a deterministic ball centered at $0$.  Let $\CG_j$ be the $\sigma$-algebra generated by $(\qhullj{\Gamma^i(t_i)}{\epsilon^\xi}{i},h^i)$ for each $i \neq j$.  Lemma~\ref{lem::variance_bound} implies that
\begin{equation}
\label{eqn::var_map_bound1}
\ex{\var{\psi(z) \giv \CF} \one_{\CA_{r,s}}}
\leq \sum_{j=1}^k \ex{ \var{\psi(z) \giv \CF, \CG_j} \one_{\CA_{r,s}}}.
\end{equation}
Fix $j$ and suppose that $\wh{\psi}$ is a conditionally independent realization of $\psi$ given $\CF$, $\CG_j$.  Then there exists a map $\phi$ which is conformal off $\qhullj{\Gamma^{k}(t_j)}{\epsilon^\xi}{k}$ with $|\phi(w)-w| \to 0$ as $w \to \infty$ such that $\wh{\psi} = \phi \circ \psi$.  Indeed, this follows because the surfaces corresponding to $\psi$ and $\wh{\psi}$ are the same except possibly in $\qhullj{\Gamma^{k}(t_j)}{\epsilon^\xi}{k}$.  Thus we have that $\wh{\psi}(z) - \psi(z) = \phi(\psi(z)) - \psi(z)$.  By Lemma~\ref{lem::conformal_map_bound}, there exists a constant $c_0 > 0$ such that $|\phi(w)-w| \leq c_0 \diam(\qhullj{\Gamma^k(t_j)}{\epsilon^\xi}{k})^4$ for any $|w| > 1$ on $\CA_{r,s}$ as $\qhullj{\Gamma^k(t_j)}{\epsilon^\xi}{k} \subseteq B(0,1/2)$.  Applying this to $w = \psi(z)$, we thus see that $|\wt{\psi}(z) - \psi(z)| \leq c_0 \diam(\qhullj{\Gamma^k(t_j)}{\epsilon^\xi}{k})^4$ (possibly increasing $c_0$).  Combining (and using the representation of the variance as $1/2$ times the expected square difference between independent samples), we have that
\begin{equation}
\label{eqn::var_map_bound2}
\ex{ \var{\psi(z) \giv \CF, \CG_j} \one_{\CA_{r,s}}} \leq c_0 \ex{\diam(\qhullj{\Gamma^k(t_j)}{\epsilon^\xi}{k})^4 \one_{\CA_{r,s}}}.
\end{equation}
Combining~\eqref{eqn::var_map_bound1} and~\eqref{eqn::var_map_bound2} (and possibly increasing $c_0$) we see that
\begin{align*}
\ex{\var{\psi(z) \giv \CF} \one_{\CA_{r,s}}}
&\leq \sum_{j=1}^k c_0 \ex{ \diam(\qhullj{\Gamma^k(t_j)}{\epsilon^{\xi}}{k})^4\one_{\CA_{r,s}} } \\
&\leq c_0 k r^4 \epsilon^{a_1 \xi} \quad\text{(Lemma~\ref{lem::metric_ball_stable_moment_bound})}\\
&= 2 c_0 r^5 \epsilon^{a_1 \xi-1} \to 0 \quad\text{as}\quad \epsilon \to 0 \quad\text{($a_1 \xi > 1$)}.
\end{align*}
The same argument also applies to $\wt{\psi}$.  This implies that $\varphi$ is $\CF$-measurable on the intersection of $\CA_{r,s}$ and the event that $K_{a,b} \subseteq \Gamma([-r^{-1},r^{-1}])$, which implies the result.
\end{proof}

\begin{proof}[Proof of Theorem~\ref{thm::map_determines_embedding}]
Let~$\CF$ be as in Lemma~\ref{lem::stable_is_determined}.  As explained earlier, Lemma~\ref{lem::stable_is_determined} implies that on the event~$\CA_{r,s}$ that~$h$ is $(r,s)$-stable, we have that $\Gamma|_{[-r^{-1},r^{-1}]}$ is $\CF$-measurable.  We note that the intersection of the $\sigma$-algebras generated by the quantum surfaces $(\C \setminus \qhull{0}{s},h)$ for $s > 0$ is trivial.  By applying scaling and using the scale invariance of the law of a $\sqrt{8/3}$-quantum cone, it therefore follows that the probability that~$\Gamma$, hence~$\mu_h$, hence~$h$ \cite{bss2014equivalence} is determined up to a global scaling and rotation by~$\CM$ is at least~$\pr{\CA_{r,s}}$.  The result follows because Proposition~\ref{prop::stable_positive_probability} implies that we can adjust $r, s$ so that~$\pr{\CA_{r,s}}$ is arbitrarily close to~$1$.
\end{proof}

\appendix

\section{Conformal removability}
\label{app::removability}

It was proved by Jones and Smirnov \cite{js2000remove} that the boundary of a H\"older domain is conformally removable.  It is remarked in \cite{js2000remove} that it is not known in general under what conditions finite unions of conformally removable sets are conformally removable.  We will now prove a slight extension of their result to give that a finite union of H\"older domain boundaries is conformally removable provided each of the boundaries is simple.

We have included the proof of this result here for completeness but emphasize that what follows is nearly identical to \cite[Section~2]{js2000remove}.

\begin{theorem}
\label{thm::finite_unions_removable}
Suppose that $D_1,\ldots,D_k \subseteq \C$ are H\"older domains such that each $\partial D_j$ can be parameterized by a simple curve.  Then $X = \cup_{j=1}^k \partial D_j$ is conformally removable.  That is, if $U \subseteq \C$ is a domain and $\varphi \colon U \to V$ is a homeomorphism which is conformal on $U \setminus X$ then $\varphi$ is conformal on all of $U$.
\end{theorem}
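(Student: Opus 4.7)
My plan is to adapt the Jones--Smirnov proof in \cite{js2000remove} that a single H\"older-domain boundary is conformally removable. Their argument establishes that any homeomorphism $\varphi\colon U \to V$ conformal on $U \setminus \partial D$ belongs to $W^{1,2}_{\mathrm{loc}}(U)$; Weyl's lemma applied to $\partial_{\bar z}\varphi$ (equivalently, the Bers--Stoilow removability criterion) then forces $\varphi$ to be conformal on all of $U$.  The $L^2$-integrability of $\varphi'$ on $U' \setminus X$ for $U' \Subset U$ is automatic from the conformality off $X$ via $\int_{U' \setminus X} |\varphi'|^2 = \mathrm{Area}(\varphi(U' \setminus X)) < \infty$, so the real content is absolute continuity of $\varphi$ across $X$.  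Jones and Smirnov obtain this for a single $\partial D$ using a Whitney decomposition $\{Q\}$ of a collar of $\partial D$ inside $D$ built from the H\"older continuity of the Riemann map $\Psi_D\colon \D \to D$, together with the summability bound $\sum_Q (\mathrm{diam}\,\varphi(Q))^2 < \infty$.

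To handle the finite union $X = \bigcup_{j=1}^k \partial D_j$, I would run this Whitney estimate once for each $\partial D_j$.  The simplicity assumption on $\partial D_j$ is essential in two ways: first, it guarantees that $D_j$ is a Jordan domain, so the Riemann map $\Psi_{D_j}\colon \D \to D_j$ extends H\"older continuously to $\overline{\D}$; second, the unbounded complementary component of $\C \setminus \overline{D_j}$ is also a simply connected H\"older domain (after a M\"obius change of coordinates sending $\infty$ to an interior point), so the Whitney summability bound applies from both sides of $\partial D_j$.  Since the collection is finite, summing the resulting $W^{1,2}$-type bounds on $|\nabla \varphi|$ over $j=1,\ldots,k$ yields $\varphi \in W^{1,2}_{\mathrm{loc}}(U)$ across all of $X$, at which point Weyl's lemma closes the argument.

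The main obstacle is that the Jones--Smirnov oscillation estimate on the Whitney cover of $\partial D_j$ is stated under the assumption that $\varphi$ is conformal on the entirety of $D_j$, whereas in our setting $\varphi$ is only conformal on $D_j \setminus \bigcup_{i\ne j}\partial D_i$.  Resolving this requires reinspecting the Jones--Smirnov argument to verify that their estimate really uses only the weaker hypotheses that $\varphi$ is a homeomorphism and that $\int_Q |\varphi'|^2 \le \mathrm{Area}(\varphi(Q))$ on each Whitney square $Q$ --- both of which remain valid in our setting because $\varphi$ is conformal almost everywhere and the total Jacobian telescopes to a finite area.  Since the core Whitney estimate ultimately depends only on geometric quantities controlled by the H\"older constant of $\Psi_{D_j}$, no information about the other boundaries enters the bound for a fixed $j$.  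This is precisely why the appendix re-proves rather than cites the Jones--Smirnov inequality: one must verify that the bookkeeping in \cite[Section~2]{js2000remove} goes through locally to each $D_j$ in the presence of additional conformally-singular sets cutting through its interior, which is the step I would expect to consume most of the effort.
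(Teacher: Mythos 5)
Your diagnosis of the obstacle is correct, but your proposed resolution does not work, and it misses the actual new idea in the paper. You claim that the Jones--Smirnov Whitney estimate ``really uses only the weaker hypotheses that $\varphi$ is a homeomorphism and that $\int_Q|\varphi'|^2\le\mathrm{Area}(\varphi(Q))$ on each Whitney square.'' This is false: the cube-to-cube oscillation bound of the form $|f(Q)-f(Q')|\le c_0\big(|Df|(Q)L(Q)+|Df|(Q')L(Q')\big)$ requires $f$ to be absolutely continuous (at least $W^{1,1}$) on the union $Q\cup Q'$ of adjacent Whitney cubes, which is derived by integrating $|Df|$ along line segments joining the cubes. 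When another boundary $X_i$ cuts through $Q\cup Q'$, absolute continuity of $f$ across $X_i\cap Q$ is exactly the thing you are trying to prove, so the argument is circular. Summing the estimates over $j$ therefore never closes, because each summand presupposes the conclusion for the other indices.

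The paper instead proves a \emph{local} variant of the Jones--Smirnov proposition (Proposition~\ref{prop::acl_up_to_the_boundary}): given a bounded open $U$ meeting $\partial D$, and $K$ the maximal subarc of $\partial D$ through a chosen point of $U$, any continuous $f\in W^{1,2}(U\setminus K)$ is in $W^{1,2}(U)$. With this in hand, one picks $z\in X_1\setminus\bigcup_{j\ge2}X_j$, a ball $B(z,\epsilon)$ small enough to avoid all other $X_j$, and $U=B(z,\epsilon)$ minus the parts of $\partial D_1$ outside the relevant arc; since $U\setminus K$ is then disjoint from the entire singular set $X$, the $W^{1,2}$ hypothesis is genuinely available and the proposition applies directly. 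After absorbing all of $X_1\setminus\bigcup_{j\ge2}X_j$, the statement follows by induction on $k$. The simplicity hypothesis is what makes the arc $K$ well defined and $D_j$ a Jordan domain. Two further comments: first, the Jones--Smirnov machinery uses the Whitney decomposition of $D_j$ from \emph{one} side only, so your two-sided argument is unnecessary; second, your assertion that the unbounded complementary component of $\C\setminus\overline{D_j}$ is automatically H\"older is unjustified --- H\"older regularity of the Riemann map is not preserved under taking complements (inward and outward cusps behave very differently) --- though, fortunately, that claim is never needed.
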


The assumption that each $\partial D_j$ is parameterized by a simple curve will be important in the proof of Theorem~\ref{thm::finite_unions_removable} because it implies that for each $\epsilon > 0$ there exists $\delta > 0$ such that if we have $z,w \in \partial D_j$ distinct with $|z-w| \leq \delta$ then there exists a path contained in the interior of $D_j$ except at its endpoints which connects $z$ to $w$ and has diameter at most $\epsilon$.

Suppose that $D \subseteq \C$ is a Jordan domain and let $\varphi \colon \D \to D$ be a conformal transformation.  Let $z = \varphi(0)$ and let $\Gamma$ be the family of paths which correspond to the images under $\varphi$ of the line segments $[0,e^{i \theta}]$ for $\theta \in [0,2\pi]$.  Let $\CW$ a Whitney cube decomposition of $D$.  For each $Q \in \CW$, we define the \emph{shadow} $\SH(Q)$ of $Q$ to be the set of points in $\partial D$ which are the endpoint of an arc in $\Gamma$ which passes through $Q$.  We also let $s(Q) = \diam(\SH(Q))$ and let $L(Q)$ and $|Q|$ respectively denote the side length and area of $Q$.  We shall be interested in domains $D$ which satisfy the condition:
\begin{equation}
\label{eqn::shadows_summable}
\sum_{Q \in \CW} s(Q)^2 < \infty.
\end{equation}
It is explained just after the statement of \cite[Corollary~2]{js2000remove} that a H\"older domain satisfies~\eqref{eqn::shadows_summable}.

We say that a function $f$ on a domain $U \subseteq \C$ is ACL (absolutely continuous on lines) if it is the case that $f$ is absolutely continuous on the intersection with $U$ of almost every line which is parallel either to the vertical or the horizontal coordinate axis.  It is not difficult to check that any function which is conformal almost everywhere and ACL is in fact conformal everywhere.

The main input into the proof of Theorem~\ref{thm::finite_unions_removable} is the following proposition, which is nearly identical to \cite[Proposition~1]{js2000remove}.  Before we give the statement, we recall that for $p \geq 1$ the space $W^{1,p}$ consists of those functions $f$ which are absolutely continuous with first derivative in $L^p$.

\begin{proposition}
\label{prop::acl_up_to_the_boundary}
Suppose that $D \subseteq \C$ is a Jordan domain.  Let $\Gamma$, $\CW$, $\SH(Q)$ and~$s(Q)$ for $Q \in \CW$ be as just above.  Assume that~\eqref{eqn::shadows_summable} holds.  Let~$U$ be any bounded domain such that $\partial D \cap U \neq \emptyset$.  Let~$\gamma$ be a curve which parameterizes~$\partial D$, fix~$t$ so that $\gamma(t) \in U$ and let $t_1$ (resp.\ $t_2$) be the supremum (resp.\ infimum) of times~$s$ before (resp.\ after) $t$ that $\gamma(s) \notin U$.  Let $K = \gamma([t_1,t_2])$.  Then any continuous function $f$ which belongs to $W^{1,2}$ on $U \setminus K$ also belongs to $W^{1,2}$ on $U$.  In particular, such a function $f$ is ACL.
\end{proposition}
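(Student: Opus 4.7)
The plan is to follow the approach of Jones and Smirnov \cite[Proposition~1]{js2000remove}, which is essentially identical in structure. Since $K$ is a Jordan arc with endpoints outside $U$ (by construction of $t_1, t_2$), it disconnects $U$ into two components $U^+ = U \cap D$ and $U^- = U \setminus \overline{D}$, and has planar Lebesgue measure zero. Hence $\nabla f$ is already defined a.e.\ on $U$ and belongs to $L^2(U)$, so it suffices to prove the ACL statement and then observe that ACL with $L^2$ partials characterizes $W^{1,2}(U)$. By Fubini, for a.e.\ horizontal line $\ell_y$ the restriction $f|_{\ell_y}$ is absolutely continuous on each connected component of $\ell_y \cap U^+$ and of $\ell_y \cap U^-$. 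Since $f$ is continuous across $K$, the only way ACL could fail on $\ell_y \cap U$ is through Cantor-type variation concentrated on the measure-zero set $\ell_y \cap K$.

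To rule this out I would execute the Whitney-cube detour estimate. For a.e.\ $y$ and each pair of nearby points $z_1, z_2 \in \ell_y$ on opposite sides of a crossing with $K$, I would construct a path $\pi = \pi(z_1, z_2) \subset U$ that routes through a Whitney chain $Q_1, \ldots, Q_m \in \CW$ on the $D$-side and then exits through a short transverse arc on the $U^-$ side. The simple-curve hypothesis on $\gamma$ is essential here: it guarantees that for any two nearby crossings of $\ell_y$ with $K$, the arc of $\gamma$ between them has small Euclidean diameter, so that the detour remains localized in a neighborhood of the crossing, and each cube $Q_i$ visited has its shadow $\SH(Q_i)$ meeting this neighborhood. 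The path length is then bounded by $\sum_i L(Q_i)$, and Cauchy-Schwarz yields
\[
|f(z_2)-f(z_1)|^2 \;\le\; \Bigl(\sum_i L(Q_i)\Bigr) \int_{\pi} |\nabla f|^2\, ds.
\]
Using the standard Whitney relation $L(Q_i) \lesssim s(Q_i)$, this is bounded by a quantity involving only the shadows of the cubes along the detour.

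To finish, I would sum these bounds over all crossings along $\ell_y$ and integrate over $y$. The essential combinatorial point is that each Whitney cube $Q \in \CW$ can be involved in detours only for $y$ in a strip of height $\lesssim L(Q) \lesssim s(Q)$, so its total contribution after integration in $y$ is controlled by $L(Q)\, s(Q) \le C\, s(Q)^2$. The hypothesis~\eqref{eqn::shadows_summable} then gives a finite total bound, which when combined with the continuity of $f$ across $K$ promotes absolute continuity from each component of $\ell_y \cap U^\pm$ to all of $\ell_y \cap U$. The same argument applies to vertical lines, and ACL together with $\nabla f \in L^2(U)$ gives $f \in W^{1,2}(U)$.

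The main obstacle is the combinatorial bookkeeping in the last step: one must organize the detour chains so that cubes are not overcounted across different crossings and different horizontal lines, so that the final estimate is controlled by $\sum_{Q} s(Q)^2$ rather than a larger quantity. This is exactly where the simple-curve assumption on $\partial D$ enters, since it permits the clean localization of detours; without it, arcs of $\gamma$ between close points on $K$ could travel through the full domain and ruin the accounting. Once this bookkeeping is in place, the rest of the argument proceeds as in \cite[Section~2]{js2000remove}.
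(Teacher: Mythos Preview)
Your overall strategy---route across $K$ via Whitney-cube chains inside $D$, bound increments of $f$ along those chains, integrate transversally, and invoke~\eqref{eqn::shadows_summable}---is the same as the paper's (and Jones--Smirnov's). But the execution has two genuine gaps.

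First, the ``standard Whitney relation $L(Q)\lesssim s(Q)$'' that you invoke is not standard and is not established anywhere; the only standard Whitney relation is $L(Q)\sim\dist(Q,\partial D)$, which says nothing about shadow diameters. Relatedly, your transversal accounting is wrong: a cube $Q$ appears in a detour for the line $\ell_y$ precisely when $\SH(Q)\cap\ell_y\neq\emptyset$, because the detour follows hyperbolic rays in $\Gamma$ and these rays through $Q$ terminate exactly in $\SH(Q)$. The cube itself need not be anywhere near $\ell_y$, so bounding the set of relevant $y$ by ``a strip of height $\lesssim L(Q)$'' fails. The correct bound is immediate: the transversal measure of such $y$ is at most $s(Q)$, with no reference to $L(Q)$. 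Second, your pointwise Cauchy--Schwarz produces a line integral $\int_\pi |\nabla f|^2\,ds$ along a curve $\pi$, which is not well-defined for a generic $W^{1,2}$ function and, even if interpreted, does not sum cleanly over detours and lines. The paper instead bounds $|f(u_i)-f(u_{i+1})|$ by $\sum_{Q\cap\gamma_i\neq\emptyset} |Df|(Q)\,L(Q)$ using cube \emph{averages} $|Df|(Q)$; after eliminating repeated cubes and integrating transversally this yields $\sum_Q |Df|(Q)\,L(Q)\,s(Q)$, and only then is Cauchy--Schwarz applied, pairing $(\sum |Df|(Q)^2|Q|)^{1/2}\le \|f\|_{W^{1,2}}$ against $(\sum s(Q)^2)^{1/2}$. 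The factors $L(Q)$ and $s(Q)$ play completely separate roles (path length vs.\ transversal measure) and are never compared to each other.

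A smaller point: the simple-curve hypothesis plays no role in this Proposition, since a Jordan domain boundary is automatically simple; the paper uses it only later, in the proof of Theorem~\ref{thm::finite_unions_removable}, to localize the problem near a single boundary arc. The paper does, however, need a device you omit: it excises an $\epsilon$-neighborhood of the two lines through the arc endpoints $\gamma(t_1),\gamma(t_2)$, so that all relevant small Whitney cubes lie inside $U$, and sends $\epsilon\to 0$ at the end.
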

\begin{proof}
Fix a direction $v$ and let $\partial_v f$ denote the directional derivative of $f$ in the sense of distributions.  Let $\iint_U$ denote the double integral, where we will first integrate along a line which is parallel to $v$ and then over all such lines.  We will aim to show that
\begin{equation}
\label{eqn::int_over_u}
\iint_U |\partial_v f| = \iint_{U \setminus K} |\partial_v f|
\end{equation}
The identity~\eqref{eqn::int_over_u} means that on Lebesgue almost every line $\ell$ which is parallel to $v$ the total variation of $f$ is equal to $\int_{\ell \cap U \setminus K} |\partial_v f|$.  Since $f \in W^{1,2}(U \setminus K) \subseteq W^{1,1}(U \setminus K)$, by Fubini's theorem this implies that $\partial_v f$ restricted to almost every line $\ell$ parallel to $v$ is in fact an integrable function.  By taking all possible directions $v$, we deduce that $f$ is ACL, thus proving the proposition.

Let $\ell_1,\ell_2$ be the two lines which are parallel to $v$ and pass through $\gamma(t_1)$ and $\gamma(t_2)$.  Fix $\epsilon > 0$ and let $\ell$ be a line which is also parallel to $v$ and with distance at least $\epsilon$ from both $\ell_1$ and $\ell_2$.  We will momentarily integrate over all such lines and apply Fubini's theorem at the end.

We denote the total variation of $f$ on $\ell \cap U$ by $\int_{\ell \cap U} |\partial_v f|$, and note that it can be arbitrarily closely approximated by expressions of the form 
\begin{equation}
\label{eqn::total_var_approx}
\sum_j |f(x_j) - f(y_j)| + \int_{\ell \cap U \setminus \cup_j [x_j,y_j]} |\partial_v f|
\end{equation}
where the pairwise disjoint intervals $[x_j,y_j]$ cover $\ell \cap K$ with $x_j,y_j \in \ell \cap K$.

Let $K_\epsilon$ consist of those $z$ in $K$ which have distance at least $\epsilon$ from $\ell_1$ and $\ell_2$ and let $\Delta_0 = \tfrac{1}{100} \big( \epsilon \wedge \dist(K_\epsilon,\partial U) \big)$.  It then follows that any Whitney cube with distance at least $\epsilon$ from $\ell_1$ and $\ell_2$ and with distance to $K$ at most $\Delta_0$ will be contained in $U$.  By condition~\eqref{eqn::shadows_summable}, as the Whitney cubes get smaller the diameters of their shadows tend to zero.  Hence we can choose such a small size $\Delta \leq \Delta_0$ such that no shadow of a Whitney cube of this or smaller size intersects more than one interval $[x_j,y_j]$, all such cubes are contained in $U$, and the shadows of the cubes of this size cover $K_\epsilon$.

Fix one such interval $[x_j,y_j]$.  Since there are only finitely many cubes of size $\Delta$, and the set $[x_j,y_j] \cap K$ is covered by their shadows (which are compact sets), one can cut $[x_j,y_j]$ into finitely many intervals $[u_i,u_{i+1}]$ so that $u_0 = x_j$ and $u_n = y_j$.  By compactness, we can do this in such a way so that, for every $i$ either $(u_i,u_{i+1}) \subseteq U \setminus K$ or $u_i$ and $u_{i+1}$ belong to the same shadow $\SH(Q_i)$, and there are curves from $\Gamma$, joining~$u_i$ and~$u_{i+1}$ to~$Q_i$, that do not intersect cubes of larger or equal size.

In the first case, we have that
\begin{equation}
\label{eqn::f_bound_deriv}
|f(u_i) - f(u_{i+1})| \leq \int_{[u_i,u_{i+1}]} |\partial_v f|.
\end{equation}
In the latter case, we can join~$u_i$ and~$u_{i+1}$ by a curve~$\gamma_i$ which follows one $\Gamma$-curve from~$u_i$ to the cube~$Q_i$ and then switches to another $\Gamma$-curve from~$Q_i$ to~$u_{i+1}$.

For an integrable function $g$ and $Q \in \CW$, we denote by
\[ g(Q) = \frac{1}{|Q|} \int_Q g\]
its mean value on $Q$.  For any two adjacent $Q, Q' \in \CW$ (i.e.\ such that they have the same side length and share a face, or one of them has twice the side length of the other and they share a face of the smaller one) contained in $U$ we observe that there exists a constant $c_0 > 0$ such that
\begin{equation}
\label{eqn::f_adj_cube_bound}
| f(Q) - f(Q')| \leq c_0 ( |D f|(Q) L(Q) + |D f|(Q') L(Q')),
\end{equation}
where $Df$ is the vector of partial derivatives of $f$ and $|Df|$ is its $L^1$ norm.  Indeed, this can be seen by letting $x$ (resp.\ $x'$) be the lower left corner of $Q$ (resp.\ $Q'$) and then considering the bound
\begin{align*}
   |f(Q) - f(Q')|
&\leq \int_{[0,1]^2} |f(L(Q) u + x) - f(L(Q')u+x')| du.
\end{align*}
Taking the Whitney cubes intersecting the curve $\gamma_i$ and excluding some of them one can choose a bi-infinite sequence of cubes such that its tails converge to $u_i$ and $u_{i+1}$ correspondingly, and any two consecutive cubes are adjacent.  Applying~\eqref{eqn::f_adj_cube_bound} (and possibly increasing the value of $c_0$), one obtains
\begin{equation}
\label{eqn::f_bound_cubes}
|f(u_i) - f(u_{i+1})| \leq c_0 \sum_{Q \cap \gamma_i \neq \emptyset} |D f|(Q) L(Q),
\end{equation}
where the sum is taken over all Whitney cubes intersecting $\gamma_i$ (even at a single point).  All the cubes in the estimate~\eqref{eqn::f_bound_cubes} have size at most $\Delta$, and by the choice of $\Delta$ they belong to $U$.

Adding up the estimates~\eqref{eqn::f_bound_deriv},~\eqref{eqn::f_adj_cube_bound}, and~\eqref{eqn::f_bound_cubes} for all $i$, we obtain
\begin{align*}
        |f(x_j) - f(y_j)|
&\leq  \sum_i |f(u_i) - f(u_{i+1})|\\
&\leq  \sum_{[ u_i,u_{i+1}] \subseteq U \setminus K} \int_{[u_i,u_{i+1}]} |\partial_v f| + c_0 \sum_{[u_i,u_{i+1}] \nsubseteq U \setminus K} \sum_{Q \cap \gamma_i \neq \emptyset} |D f|(Q) L(Q).
\end{align*}

The first term is bounded from above by $\int_{[x_j,y_j] \setminus K} |\partial_v f|$.  Note that all Whitney cubes that arise in the second term have one of the points $u_i$ in their shadow and are of size at most $\Delta$.  As the following reasoning shows, for the purpose of estimating $|f(x_j) - f(y_j)|$, we can assume that no cube appears twice in the sums.  In fact, if there is a Whitney cube $Q$, entered by two curves $\gamma_k$ and $\gamma_l$, $k < l$, then we can make a new curve out of them, connecting $u_k$ directly to $u_{l+1}$, and thus improving the estimate above, writing
\[ |f(x_j) - f(y_j)| \leq \sum_{i < k} |f(u_i) - f(u_{i+1})| + |f(u_k) - f(u_{l+1})| + \sum_{i > l} | f(u_i) - f(u_{i+1})|\]
and obtaining fewer cubes in the resulting estimate.  If necessary, we can repeat this procedure several, and thus assume that no Whitney cube is entered by two different curves.

Therefore we can rewrite our estimate as
\begin{equation}
\label{eqn::better_estimate}
|f(x_j) - f(y_j)| \leq \int_{[x_j,y_j] \setminus K} |\partial_v f| + c_0 \sum_{\SH(Q) \cap [x_j,y_j] \neq \emptyset} |D f|(Q) L(Q).
\end{equation}
Recalling that by the choice of~$\Delta$, no shadow of a cube of that or smaller size intersects more than one interval $[x_j,y_j]$, we conclude that every cube $Q$ in the estimate~\eqref{eqn::better_estimate}  appears for at most one $j$ and has shadow intersecting~$\ell$.  Now, summing~\eqref{eqn::better_estimate} over all~$j$, we obtain the following estimate of the expression~\eqref{eqn::total_var_approx}
\begin{equation}
\label{eqn::abc}
\int_{\ell \cap U} |\partial_v f| \leq c_0 \sum_{\SH(Q) \cap \ell \neq \emptyset} |D f|(Q) L(Q) + \int_{\ell \cap U \setminus K} |\partial_v f|.
\end{equation}
Moreover, only Whitney cubes of size at most $\Delta$ (which we can choose to be arbitrarily small) are included in the latter estimate.

Notice that a Whitney cube $Q$ participates in the estimate only if the line $\ell$ intersects its shadow, and the measure of the set of such lines is at most $s(Q)$.  Let $U_\epsilon$ be the set of points in $U$ which have distance at least $\epsilon$ from both $\ell_1$ and $\ell_2$.  Integrating~\eqref{eqn::abc} over all lines $\ell$, parallel to the direction $v$ and with distance at least $\epsilon$ from $\ell_1$ and $\ell_2$, (here $\mu$ denotes the transversal measure on those lines), and applying Fubini's theorem we obtain
\begin{align}
      \iint_{U_\epsilon} |\partial_v f|
& :=  \int \left( \int_{\ell \cap U_\epsilon} |\partial_v f| \right) d\mu(\ell) \notag\\
&\leq \int\left( c_0 \sum_{\SH(Q) \cap \ell \neq \emptyset} |D f|(Q) L(Q) + \int_{\ell \cap U \setminus K} |\partial_v f| \right) d\mu(\ell) \quad\text{(by~\eqref{eqn::abc})}\notag\\
&= \int\left( c_0 \sum |D f|(Q) L(Q) \one_{\SH(Q) \cap \ell \neq \emptyset} + \int_{\ell \cap U \setminus K} |\partial_v f| \right) d\mu(\ell) \notag\\
&\leq c_0 \left( \sum |D f|(Q) L(Q) s(Q) \right) + \iint_{U_\epsilon \setminus K} |\partial_v f|. \label{eqn::variation_estimate}
\end{align}
In the final inequality, we used that $\int \one_{\SH(Q) \cap \ell \neq \emptyset} d\mu(\ell) \leq s(Q)$.  We will now argue that the first sum in~\eqref{eqn::variation_estimate} is finite.  Indeed, by the Cauchy-Schwarz inequality and using that $|Q| = (L(Q))^2$, we have that
\begin{align}
       \sum |D f|(Q) L(Q) s(Q)
&\leq \left( \sum |D f|(Q)^2 |Q| \right)^{1/2} \left( \sum\big( s(Q) / L(Q) \big)^2 |Q| \right)^{1/2} \notag\\
&\leq \left( \sum |D f|(Q)^2 |Q| \right)^{1/2} \left( \sum s(Q)^2 \right)^{1/2}. \label{eqn::cs_variation_bound}
\end{align}
The first sum on the right hand side of~\eqref{eqn::cs_variation_bound} is bounded by the $W^{1,2}(U \setminus K)$ norm of~$f$, hence finite.  Condition~\eqref{eqn::shadows_summable} implies that the second sum on the right hand side of~\eqref{eqn::cs_variation_bound} is finite.  As before, we can assume that only Whitney cubes of a small size $\Delta$ (which we are free to choose) participate in this sum.  Thus by taking a limit as $\Delta \to 0$ (but with $\epsilon > 0$ fixed), we say that the second sum in the right side of~\eqref{eqn::cs_variation_bound} tends to~$0$.  We therefore arrive at the bound
\begin{equation}
\label{eqn::nearly_final_bound}
\iint_{U_\epsilon} |\partial_v f| \leq \iint_{U_\epsilon \setminus K} |\partial_v f|.
\end{equation}
Sending $\epsilon \to 0$, using that $K$ is between $\ell_1$ and $\ell_2$, and applying the monotone convergence theorem,~\eqref{eqn::nearly_final_bound} we see that
\begin{equation}
\label{eqn::var_inequality}	
\iint_{U} |\partial_v f| \leq \iint_{U \setminus K} |\partial_v f|.
\end{equation}
Clearly, both sides of~\eqref{eqn::var_inequality} are therefore equal, thus proving the desired equality~\eqref{eqn::int_over_u}, and hence the proposition.
\end{proof}

\begin{proof}[Proof of Theorem~\ref{thm::finite_unions_removable}]
Suppose that $D_1,\ldots,D_k \subseteq \C$ are H\"older domains such that each $X_j = \partial D_j$ may be parameterized by a simple curve.  Let $X = \cup_{j=1}^k X_j$.  Suppose that $W \subseteq \C$ is an open set containing $X$ and that $f$ is a homeomorphism on $W$ which is conformal on $W \setminus X$.

Pick $z \in X_1 \setminus \cup_{j=2}^k X_j$.  We will show that~$f$ is conformal in neighborhood of~$z$.  Let $\gamma_1 \colon [0,1] \to X_1$ be a simple curve which parameterizes $X_1$ (except $\gamma_1(0) = \gamma_1(1)$).  Assume that $\gamma_1(0) \neq z$ and let $t \in (0,1)$ be the unique value so that $\gamma_1(t) = z$.    Fix $\epsilon > 0$ so that $B(z,\epsilon)$ intersects $X_1$ but not $X_j$ for $j \neq 1$.  Assume also that $B(z,\epsilon)$ does not contain $\gamma_1(0)$.  Let $t_1$ be the largest time~$s$ before~$t$ so that $\gamma_1(s)$ is not in $B(z,\epsilon)$ and let~$t_2$ be the smallest time~$s$ after~$t$ so that $\gamma_1(s)$ is not in $B(z,\epsilon)$.  Let $U = B(z,\epsilon) \setminus (\gamma_1([0,t_1]) \cup \gamma_1([t_2,1]))$.  Then Proposition~\ref{prop::acl_up_to_the_boundary} implies that $f$ is ACL in~$U$.  Since~$f$ is also conformal almost everywhere, this implies that~$f$ is conformal in~$U$.  In particular, $f$ is conformal in a neighborhood of~$z$.  Since $z \in X_1 \setminus \cup_{j=2}^k X_j$ with $z \neq \gamma_1(0)$ was arbitrary, this implies that~$f$ is conformal in $D \setminus \cup_{j=2}^k X_j$ except possibly at~$\gamma_1(0)$.  However, by using a different choice of parameterization of~$X_1$, we see that~$f$ is conformal at~$\gamma_1(0)$ as well.  The result therefore follows by induction on~$k$.
\end{proof}

\bibliographystyle{hmralphaabbrv}
\addcontentsline{toc}{section}{References}
\bibliography{sle_kappa_rho}

\begin{thebibliography}{GHMS17}

\bibitem[Ald91a]{ald1991crt1}
D.~Aldous.
\newblock The continuum random tree. {I}.
\newblock {\em Ann. Probab.}, 19(1):1--28, 1991. \MR{1085326 (91i:60024)}

\bibitem[Ald91b]{ald1991crt2}
D.~Aldous.
\newblock The continuum random tree. {II}. {A}n overview.
\newblock In {\em Stochastic analysis ({D}urham, 1990)}, volume 167 of {\em
  London Math. Soc. Lecture Note Ser.}, pages 23--70. Cambridge Univ. Press,
  Cambridge, 1991. \MR{1166406 (93f:60010)}

\bibitem[Ald93]{ald1993crt3}
D.~Aldous.
\newblock The continuum random tree. {III}.
\newblock {\em Ann. Probab.}, 21(1):248--289, 1993. \MR{1207226 (94c:60015)}

\bibitem[BM17]{bettinelli_miermont}
J.~Bettinelli and G.~Miermont.
\newblock Compact {B}rownian surfaces {I}: {B}rownian disks.
\newblock {\em Probab. Theory Related Fields}, 167(3-4):555--614, 2017.
\newblock \arxiv{1507.08776}. \MR{3627425}

\bibitem[BM18]{bm_compact2}
J.~Bettinelli and G.~{Miermont}.
\newblock {Compact Brownian surfaces II. The general case}.
\newblock 2018+.
\newblock In preparation.

\bibitem[BMR19]{bmr2016classification}
E.~Baur, G.~Miermont, and G.~Ray.
\newblock Classification of scaling limits of uniform quadrangulations with a
  boundary.
\newblock {\em Ann. Probab.}, 47(6):3397--3477, 2019.
\newblock \arxiv{1608.01129}. \MR{4038036}

\bibitem[BSS14]{bss2014equivalence}
N.~{Berestycki}, S.~{Sheffield}, and X.~{Sun}.
\newblock {Equivalence of Liouville measure and Gaussian free field}.
\newblock {\em ArXiv e-prints}, October 2014, \arxiv{1410.5407}.

\bibitem[CLG14]{cl2012brownianplane}
N.~Curien and J.-F. Le~Gall.
\newblock The {B}rownian plane.
\newblock {\em J. Theoret. Probab.}, 27(4):1249--1291, 2014.
\newblock \arxiv{1204.5921}. \MR{3278940}

\bibitem[DMS14]{dms2014mating}
B.~{Duplantier}, J.~{Miller}, and S.~{Sheffield}.
\newblock {Liouville quantum gravity as a mating of trees}.
\newblock {\em ArXiv e-prints}, September 2014, \arxiv{1409.7055}.
\newblock To appear in Asterisque.

\bibitem[DS11]{DS08}
B.~Duplantier and S.~Sheffield.
\newblock Liouville quantum gravity and {KPZ}.
\newblock {\em Invent. Math.}, 185(2):333--393, 2011.
\newblock \arxiv{0808.1560}. \MR{2819163 (2012f:81251)}

\bibitem[ES81]{efronstein}
B.~Efron and C.~Stein.
\newblock The jackknife estimate of variance.
\newblock {\em Ann. Statist.}, 9(3):586--596, 1981. \MR{615434 (82k:62074)}

\bibitem[GHMS17]{ghms2015correlation}
E.~Gwynne, N.~Holden, J.~Miller, and X.~Sun.
\newblock Brownian motion correlation in the peanosphere for {$\kappa>8$}.
\newblock {\em Ann. Inst. Henri Poincar\'e Probab. Stat.}, 53(4):1866--1889,
  2017.
\newblock \arxiv{1510.04687}. \MR{3729638}

\bibitem[GM16]{gm2016saw}
E.~{Gwynne} and J.~{Miller}.
\newblock {Convergence of the self-avoiding walk on random quadrangulations to
  SLE$\_{8/3}$ on $\sqrt{8/3}$-Liouville quantum gravity}.
\newblock {\em ArXiv e-prints}, August 2016, \arxiv{1608.00956}.
\newblock To appear in Annales ENS.

\bibitem[GM17]{gm2016uihpq}
E.~Gwynne and J.~Miller.
\newblock Scaling limit of the uniform infinite half-plane quadrangulation in
  the {G}romov-{H}ausdorff-{P}rokhorov-uniform topology.
\newblock {\em Electron. J. Probab.}, 22:Paper No. 84, 47, 2017.
\newblock \arxiv{1608.00954}. \MR{3718712}

\bibitem[GM19]{gm2016gluing}
E.~Gwynne and J.~Miller.
\newblock Metric gluing of {B}rownian and {$\sqrt{8/3}$}-{L}iouville quantum
  gravity surfaces.
\newblock {\em Ann. Probab.}, 47(4):2303--2358, 2019.
\newblock \arxiv{1608.00955}. \MR{3980922}

\bibitem[GPW09]{grevenpfaffelhuberwinter}
A.~Greven, P.~Pfaffelhuber, and A.~Winter.
\newblock Convergence in distribution of random metric measure spaces
  ({$\Lambda$}-coalescent measure trees).
\newblock {\em Probab. Theory Related Fields}, 145(1-2):285--322, 2009.
\newblock \arxiv{math/0609801}. \MR{2520129 (2011c:60008)}

\bibitem[GRV16]{guillarmourhodesvargas}
C.~{Guillarmou}, R.~{Rhodes}, and V.~{Vargas}.
\newblock {Liouville Quantum Gravity on compact surfaces}.
\newblock {\em ArXiv e-prints}, July 2016, \arxiv{1607.08467}.

\bibitem[JS00]{js2000remove}
P.~W. Jones and S.~K. Smirnov.
\newblock Removability theorems for {S}obolev functions and quasiconformal
  maps.
\newblock {\em Ark. Mat.}, 38(2):263--279, 2000. \MR{1785402 (2001i:46049)}

\bibitem[LG13]{lg2013uniqueness}
J.-F. Le~Gall.
\newblock Uniqueness and universality of the {B}rownian map.
\newblock {\em Ann. Probab.}, 41(4):2880--2960, 2013.
\newblock \arxiv{1105.4842}. \MR{3112934}

\bibitem[Mie13]{mier2013bm}
G.~Miermont.
\newblock The {B}rownian map is the scaling limit of uniform random plane
  quadrangulations.
\newblock {\em Acta Math.}, 210(2):319--401, 2013.
\newblock \arxiv{1104.1606}. \MR{3070569}

\bibitem[MQ20]{mq2020geodesics}
J.~Miller and W.~Qian.
\newblock The geodesics in {L}iouville quantum gravity are not
  {S}chramm-{L}oewner evolutions.
\newblock {\em Probab. Theory Related Fields}, 177(3-4):677--709, 2020.
\newblock \arxiv{1812.03913}. \MR{4126929}

\bibitem[MS15]{map_making}
J.~{Miller} and S.~{Sheffield}.
\newblock {An axiomatic characterization of the Brownian map}.
\newblock {\em ArXiv e-prints}, June 2015, \arxiv{1506.03806}.
\newblock To appear in Journal Ecole Polytechnique.

\bibitem[MS16a]{qle_continuity}
J.~{Miller} and S.~{Sheffield}.
\newblock {Liouville quantum gravity and the Brownian map II: geodesics and
  continuity of the embedding}.
\newblock {\em ArXiv e-prints}, May 2016, \arxiv{1605.03563}.

\bibitem[MS16b]{MS_IMAG}
J.~Miller and S.~Sheffield.
\newblock Imaginary geometry {I}: interacting {SLE}s.
\newblock {\em Probab. Theory Related Fields}, 164(3-4):553--705, 2016.
\newblock \arxiv{1201.1496}. \MR{3477777}

\bibitem[MS16c]{MS_IMAG2}
J.~Miller and S.~Sheffield.
\newblock {Imaginary geometry II: reversibility of SLE$_\kappa(\rho_1;\rho_2)$
  for $\kappa \in (0,4)$}.
\newblock {\em Ann. Probab.}, 44(3):1647--1722, 2016.
\newblock \arxiv{1201.1497}. \MR{3502592}

\bibitem[MS16d]{MS_IMAG3}
J.~Miller and S.~Sheffield.
\newblock {Imaginary geometry III: reversibility of SLE$_\kappa$ for $\kappa
  \in (4,8)$}.
\newblock {\em Ann. of Math. (2)}, 184(2):455--486, 2016.
\newblock \arxiv{1201.1498}. \MR{3548530}

\bibitem[MS16e]{ms2013qle}
J.~Miller and S.~Sheffield.
\newblock Quantum {L}oewner evolution.
\newblock {\em Duke Math. J.}, 165(17):3241--3378, 2016.
\newblock \arxiv{1312.5745}. \MR{3572845}

\bibitem[MS17]{MS_IMAG4}
J.~Miller and S.~Sheffield.
\newblock Imaginary geometry {IV}: interior rays, whole-plane reversibility,
  and space-filling trees.
\newblock {\em Probab. Theory Related Fields}, 169(3-4):729--869, 2017.
\newblock \arxiv{1302.4738}. \MR{3719057}

\bibitem[MS19]{quantum_spheres}
J.~Miller and S.~Sheffield.
\newblock Liouville quantum gravity spheres as matings of finite-diameter
  trees.
\newblock {\em Ann. Inst. Henri Poincar\'{e} Probab. Stat.}, 55(3):1712--1750,
  2019.
\newblock \arxiv{1506.03804}. \MR{4010949}

\bibitem[MS20]{qlebm}
J.~Miller and S.~Sheffield.
\newblock Liouville quantum gravity and the {B}rownian map {I}: the {${\rm
  QLE}(8/3,0)$} metric.
\newblock {\em Invent. Math.}, 219(1):75--152, 2020.
\newblock \arxiv{1507.00719}. \MR{4050102}

\bibitem[RY99]{RY04}
D.~Revuz and M.~Yor.
\newblock {\em Continuous martingales and {B}rownian motion}, volume 293 of
  {\em Grundlehren der Mathematischen Wissenschaften [Fundamental Principles of
  Mathematical Sciences]}.
\newblock Springer-Verlag, Berlin, third edition, 1999. \MR{2000h:60050}

\bibitem[Ser97]{serlet_ldp}
L.~Serlet.
\newblock A large deviation principle for the {B}rownian snake.
\newblock {\em Stochastic Process. Appl.}, 67(1):101--115, 1997. \MR{1445046
  (98j:60041)}

\bibitem[She16]{SHE_WELD}
S.~Sheffield.
\newblock Conformal weldings of random surfaces: {SLE} and the quantum gravity
  zipper.
\newblock {\em Ann. Probab.}, 44(5):3474--3545, 2016.
\newblock \arxiv{1012.4797}. \MR{3551203}

\end{thebibliography}

\bigskip

\filbreak
\begingroup
\small
\parindent=0pt

\bigskip
\vtop{
\hsize=5.3in
Statistical Laboratory, DPMMS\\
University of Cambridge\\
Cambridge, UK}

\bigskip
\vtop{
\hsize=5.3in
Department of Mathematics\\
Massachusetts Institute of Technology\\
Cambridge, MA, USA } \endgroup \filbreak

\end{document}